\numberwithin{equation}{section}
\numberwithin{figure}{section}
\theoremstyle{plain}
\newtheorem{thm}{\protect\theoremname}[section]
\theoremstyle{plain}
\newtheorem{lem}[thm]{\protect\lemmaname}
\theoremstyle{definition}
\newtheorem{defn}[thm]{\protect\definitionname}
\theoremstyle{remark}
\newtheorem{rem}[thm]{\protect\remarkname}
\theoremstyle{plain}
\newtheorem{assumption}[thm]{\protect\assumptionname}
\theoremstyle{plain}
\newtheorem{prop}[thm]{\protect\propositionname}
\theoremstyle{plain}
\newtheorem{cor}[thm]{\protect\corollaryname}
\theoremstyle{plain}
\definecolor{rot}{rgb}{1.000,0.000,0.000}
\tikzset{
  LabelStyle/.style = {rectangle, rounded corners, draw,
                        minimum width = 2em, 
                        text = black, font = \bfseries },
  VertexStyle/.style = {circle, draw, 
                                font =  \large\bfseries},
  EdgeStyle/.style = {->, bend left} }
\newcommand{\N}{\mathbb{N}}
\newcommand{\R}{\mathbb{R}}
\newcommand{\eps}{\varepsilon}
\newcommand{\cE}{\mathcal{E}}
\newcommand{\cR}{\mathcal{R}}
\newcommand{\cS}{\mathcal{S}}
\newcommand{\fD}{\mathfrak D}
\renewcommand{\L}{\mathrm{L}}
\newcommand{\W}{\mathrm{W}}
\newcommand{\la}{\langle}
\newcommand{\ra}{\rangle}
\newcommand{\Gammlim}{\xrightarrow{\Gamma}}
\newcommand{\Moscolim}{\xrightarrow{\mathrm{M}}}
\newcommand{\D}{\mathrm{D}}
\newcommand{\eff}{\mathrm{eff}}
\newcommand{\slow}{\mathrm{sl}}
\newcommand{\fast}{\mathrm{fa}}
\newcommand{\scrEfa}{\mathscr{E}_\fast}
\newcommand{\scrMsl}{\mathscr{M}_\slow}
\newcommand{\sfq}{{\mathsf{q}}}
\newcommand{\sfw}{{\mathsf{w}}}
\newcommand{\sfC}{{\mathsf{C}}} 
\newcommand{\sfE}{{\mathsf{E}}}
\newcommand{\sfR}{{\mathsf{R}}}
\newcommand{\Qspace}{\bm{\mathsf{Q}}}
\newcommand{\sfX}{\bm{\mathsf{X}}}
\newcommand{\LB}{\lambda_{\rmB}}
\newcommand{\ul}[1]{\underline{#1}}
\definecolor{rot}{rgb}{1.000,0.000,0.000}
\newcommand{\GamLimE}{\stackrel{\text{$\Gamma_\mathrm{E}$}}{\rightarrow}}
\newcommand{\wGamLimE}{\stackrel{\text{$\Gamma_\mathrm{E}$}}{\rightharpoonup}}
\newcommand{\MoscoLimE}{\stackrel{\text{$\mathrm{M}_\mathrm{E}$}}{\longrightarrow}}
\providecommand{\assumptionname}{Assumption}
\providecommand{\corollaryname}{Corollary}
\providecommand{\definitionname}{Definition}
\providecommand{\lemmaname}{Lemma}
\providecommand{\propositionname}{Proposition}
\providecommand{\remarkname}{Remark}
\providecommand{\theoremname}{Theorem}
\let\eps\undefined 
\newcommand{\bmR}{{\bm R}}
\newcommand{\mathOP}[1]{\mathop{\mathrm{#1}}}
\renewcommand*\env@cases[1][1.2]{%
  \let\@ifnextchar\new@ifnextchar
  \left\lbrace
  \def\arraystretch{#1}%
  \array{@{}c@{\ \ }l@{}}%
}
\begin{document}

\title{EDP-convergence for nonlinear fast-slow reaction systems 
         with detailed balance\thanks{Research partially supported by DFG via
           SFB\,1114 (project no.\ 235221301, subproject C05).}}

\author{Alexander Mielke\thanks{Weierstraß-Institut für Angewandte
      Analysis und Stochastik, Mohrenstraße 39, 10117 Berlin
      and Humboldt-Universität zu Berlin, Unter den Linden 6, 10099
      Berlin, Germany, e-mail: \tt{alexander.mielke@wias-berlin.de}}, 
    Mark A. Peletier\thanks{Department of Mathematics and Computer Science and
      Institute for Complex Molecular Systems, TU Eindhoven, 5600 MB Eindhoven,
      The Netherlands, e-mail: \tt{M.A.Peletier@tue.nl}},
and Artur Stephan\thanks{Weierstraß-Institut für Angewandte Analysis und Stochastik, 
        Mohrenstraße 39, 10117 Berlin, e-mail: \tt{artur.stephan@wias-berlin.de}}}

\date{16 October 2020. Revised 9. April 2021}
\maketitle

\lhead{Nonlinear fast-slow reaction systems}
\chead{9.4.2021}
\rhead{A. Mielke, M. Peletier, A. Stephan}

\begin{abstract}
  We consider nonlinear reaction systems satisfying mass-action kinetics with
  slow and fast reactions. It is known that the fast-reaction-rate limit 
  can be described by an ODE with Lagrange
  multipliers and a set of nonlinear constraints that ask the fast reactions to
  be in equilibrium. Our aim is to study the limiting gradient structure which
  is available if the reaction system satisfies the detailed-balance condition.

  The gradient structure on the set of concentration vectors is given in terms
  of the relative Boltzmann entropy and a cosh-type dissipation potential.  We
  show that a limiting or effective gradient structure can be rigorously
  derived via EDP convergence, i.e.\ convergence in the sense of the
  Energy-Dissipation Principle for gradient flows. In general, the effective
  entropy will no longer be of Boltzmann type and the reactions will no longer
  satisfy mass-action kinetics.
\end{abstract}

\section{Introduction}
\label{se:Intro}

The study of nonlinear reaction systems with different time scales has
attracted much attention over the last decades, see e.g.\
\cite{Both03ILRC, KanKur13STSM, WinSch17HMCR, DiLiZi18EGCG,
  MieSte19?CGED, MaaMie20?MCRS} and the references therein.  In this
work we consider the simplest case of fast-slow reaction systems with
mass-action kinetics that have only two time scales, namely $1$ and
$\eps$,
\begin{equation}
  \label{eq:I.RRE}
  \dot c = \bm R_\slow(c) + \frac1\eps \bm R_\fast(c), 
\end{equation}
where $c \in \bfC:= {[0,\infty[}^{i_*}$ denotes the vector of the
concentrations $c_i$ of the $i^\text{th}$ species $X_i$. The typical aim of the
above-mentioned work is to derive the limiting equation for the evolution of
$c$ on the slow time scale, while the fast reactions are in equilibrium. Under
suitable assumptions the limiting equation can be formulated in three
equivalent ways:
\begin{align*}
\text{constrained dynamics:}\quad 
\dot c(t) &= \bm R_\slow(c(t)) + \lambda(t), \quad \lambda(t) \in \Gamma_\fast
\subset \R^{i_*}, \quad \bm R_\fast(c(t))=0,\\[0.4em]
\text{projected dynamics:}\quad
\dot c(t) &=(I{-} \bbP(c(t))) \bm R_\slow(c(t)) , \quad \bm R_\fast(c(0))=0, \\[0.4em]
\text{reduced dynamics:}\quad 
\dot\sfq(t) &= Q_\fast \bm R_\slow(\Psi(\sfq(t))), \quad c(t)=\Psi(\sfq(t)).  
\end{align*} 
We refer to Section \ref{se:EffGS} for a discussion of these
formulations. 

The goal of this work is to revisit the same limit process, but now from the
point of view of variational evolution.  Our starting point is that certain
reaction-rate equations such as \eqref{eq:I.RRE} can be written as a
gradient-flow equation. For a given evolution equation $\dot u=\bm V(u)$ on the
state space $\sfX$ we say that it has a gradient structure, if there exists an
energy functional $\calE:\sfX\to \R$ and a (dual) dissipation potential
$\calR^*:\rmT^*\sfX \to {[0,\infty[}$ such that
$V(u)=\pl_\xi\calR^*(u,{-}\rmD \calE(u))$ for all $u\in \sfX$. This means that
the vector field~$\bm V$ is generated by two scalar-valued functions $\calE$
and $\calR$, which are typically motivated by thermodynamical considerations.
If $\calR^*(u,\cdot)$ is quadratic, viz.\
$\calR^*(u,\xi)= \frac12\langle \xi, \bbK(u)\xi\rangle$, and $\bbK(u)$ is
invertible, then we have a \emph{classical gradient system}, where
$\nabla \calE(u):=\bbK(u)\rmD\calE(u)$ is the gradient, and the quadratic form
$v\mapsto \langle \bbK(u)^{-1}v, v\rangle$ defines a metric tensor.

More general, dissipation potentials are defined via the property that each
$\calR^*(u,\cdot)$ is convex and non-negative with $\calR^*(u,0)=0$; a
corresponding triple $(\sfX, \calE,\calR^*)$ is called a \emph{generalized
  gradient system}.  Each such system generates a unique gradient-flow equation
$\dot u = V(u)=\pl_\xi\calR^*(u,{-}\rmD \calE(u))$. However, for a given
evolution equation $\dot u=\bm V(u)$ there may be zero, one, or several
gradient structures. By the properties of the dissipation potential $\calR^*$,
see Section \ref{su:NotationsGS}, the function $\calE$ is a Liapunov function
decaying along solutions.

There is no general theory concerning the question when a given reaction-rate
equation has a gradient structure.  However, there exists a class of reaction
systems that have a natural gradient structure: these are reaction systems with
mass-action kinetics where the reactions occur in pairs of forward-backward
reactions satisfying the \emph{detailed-balance condition} \eqref{eq:I.DBC}
below. This observation was highlighted in \cite[Sec.\,3.1]{Miel11GSRD}
but was observed and used implicitly earlier in
\cite[Eqs.\,(103)+(113)]{OttGrm97DTCF2} and \cite[Sec.\,VII]{Yong08ICPD}.  A
different gradient structure already occurs in \cite[Eqn.\,(69)]{Grme10MENT}
and has its origin in the thermodynamic considerations in \cite{Marc15CECP}
from 1915. The latter structure, which we will call the
\emph{cosh-type gradient structure} as in \cite{MieSte19?CGED}, was
mathematically derived in \cite{MiPeRe14RGFL, MPPR17NETP} from microscopic
chemical master equations via a large-deviation principle.

To be specific, we assume that
the species $X_i$, $i\in I:=\{1,\ldots,i_*\}$ undergo $r_*$
forward-backward reactions according to the mass-action kinetics 
\[
 \alpha_{1}^{r}X_{1} +\cdots+ \alpha_{i_*}^{r}X_{i_*}\quad
 \rightleftharpoons \quad
  \beta_{1}^{r}X_{1} + \cdots + \beta_{i_*}^{r}X_{i_*},
\] 
where $\alpha^r =(\alpha_i^r)_{i\in I}$ and $\beta^r
=(\beta_i^r)_{i\in I}$ are the stoichiometric vectors in
$\N_0^{i_*}$. The reaction-rate equation \eqref{eq:I.RRE} takes the form 
\begin{equation}
  \label{eq:I.fsRREmak}
\dot c = -\sum_{r=1}^{r_*} \Big(k^\mafo{fw}_r c^{\alpha^r} - k^\mafo{bw}_r
c^{\beta^r}\Big)\big( \alpha^r{-}\beta^r), \quad \text{where }
c^\alpha = c_1^{\alpha_1}\cdots c_{i_*}^{\alpha_{i_*}}.
\end{equation}
The \emph{detailed-balance condition} asks for the existence of a
positive concentration vector $c_*=(c_i^*)_{i\in I} \in \bfC_+:=
{]0,\infty[}^{i_*}$ such that all $r_*$ reactions are in
\begin{equation}
  \label{eq:I.DBC}
  \exists\, c_*=(c_i^*)_{i\in I} \in \bfC_+\ \forall\, r\in
  R:=\{1,\ldots,r_*\}:\quad k^\mafo{fw}_r c_*^{\alpha^r} = k^\mafo{bw}_r
c^{\beta^r}_*\,. 
\end{equation}
This means that there is one equilibrium vector $c_*$ such that all
reaction pairs are in equilibrium simultaneously. The reaction strength of a
reaction pair can then be measured by $\wh\kappa_r = k^\mafo{fw}_r
c_*^{\alpha^r}/\delta_r^*= k^\mafo{bw}_r c^{\beta^r}_*/\delta_r^*$ where 
$\delta^*_r = \big( c_*^{\alpha^r} c^{\beta^r}_*\big)^{1/2}$. 

The set of reaction pairs $R$ will be decomposed into
slow and fast reactions, namely $R= R_\slow \,\dot\cup\,
R_\fast$ and by assuming  $\wh\kappa_r = \kappa_r$ for $r \in R_\slow$
and $\wh\kappa_r=\kappa_r/\eps$ for $r\in R_\fast$, where $\kappa_r$
are fixed numbers. Hence, slow reactions occur on the time scale $O(1)$,
whereas fast reactions occur on the time scale $O(\eps)$. 
The fast-slow reaction-rate equation now reads
\begin{equation}
  \label{eq:I.fsRREsym}
  \dot c = \bm R_\slow(c) + \frac1\eps \bm R_\fast(c) \quad \text{with
    } \bm R_\mathrm{xy}(c) := - \sum_{r\in R_\mathrm{xy}} \kappa_r \,\delta_r^* \bigg(
  \frac{c^{\alpha^r}}{c_*^{\alpha_r}} -
  \frac{c^{\beta^r}}{c_*^{\beta_r}}\bigg) ( \alpha^r - \beta^r).
\end{equation}
Throughout this work, we assume that the equilibrium vector $c_*$ does not
depend on $\eps$.

The cosh-type gradient structure for is now defined in terms of a gradient
system $(\bfC,\cE,\cR^*_\eps)$, where the energy functional is given in terms
of the relative Boltzmann entropy 
\[
\cE(c)= \sum_{i\in I} c^*_i \LB(c_i/c^*_i), \quad \text{where } 
    \LB(\rho):= \rho \log \rho - \rho +1,
\]    
and the dual dissipation potential $\cR^*_\eps$ in the form 
\[
\cR^*_\eps(c,\xi) = \cR^*_\slow(c,\xi) + \frac1\eps \cR^*_\fast(c,\xi)
\quad \text{with }
\cR^*_\mathrm{xy}(c,\xi) = \sum_{r\in R_\mathrm{xy}} \kappa_r 
\big( c^{\alpha^r}c^{\beta^r}\big)^{1/2}\:
\sfC^*\big((\alpha^r{-}\beta^r) \cdot \xi \big),   
\]
where $\sfC^*(\zeta)= 4\cosh(\zeta/2) - 4$ involves the ``cosh
structure''. There is now a special and absolutely non-trivial interaction
between the Bolzmann entropy, the mass-action law, and the cosh-type gradient
structure, which relies on the fact that $\rmD\calE(c)$ is the vector 
of logarithms, namely $\xi=\rmD\calE(c)= \big(\log (c_i/c_*^i) \big)_i
$. Multiplying this by the stoichimetric vectors 
$\alpha^r-\beta^r$ and using the logarithm rules we obtain 
\[
(\alpha^r{-}\beta^r) \cdot \xi = (\alpha^r{-}\beta^r) \cdot\rmD\calE(c) =
\log\Big( \frac{c^{\alpha^r}} {c_*^{\alpha^r}}\Big) - \log\Big(\frac{ c^{\beta^r}}
{c_*^{\beta^r}}  \Big)  .
\] 
For evaluating $\pl_\xi \calR^*_\mathrm{xy}(c,\xi)$ these terms are inserted
into
$(\sfC^*)'(\zeta) = 2\sinh(\zeta/2) = \big(\ee^{\zeta}\big)^{1/2} -
\big(\ee^{-\zeta}\big)^{1/2} $, which leads to a cancellation of the logarithms
and the desired monomials appear after exploiting the square roots
$\big( c^{\alpha^r}c^{\beta^r}\big)^{1/2}$ in $\calR^*_\mathrm{xy}$ and in
$\delta_r^*=\big(c_*^{\alpha^r}c_*^{\beta^r}\big)^{1/2} $. Thus, the
fast-slow reaction-rate equation \eqref{eq:I.fsRREsym} indeed takes the form of the
gradient-flow equation
\[
\dot c(t) = \pl_\xi\cR^*_\eps\big( c(t), {-}\D \cE(c(t))\big).
\] 
In fact, there are many other gradient structures for
\eqref{eq:I.fsRREsym}, see Remark
\ref{re:SeveralGS}; however the cosh-type gradient structure is
special in several aspects: (i) it can be derived via large-deviation
principles \cite{MiPeRe14RGFL, MPPR17NETP}, (ii) the dual dissipation
potential $\cR^*_\eps$ is independent of $c_*$, and (iii) it is stable
under general limiting processes, see \cite[Sec.\,3.3]{LMPR17MOGG}. 
The property (ii), also called  \emph{tilt invariance} below, 
will be especially important for us.

The main goal of this paper is to construct the effective gradient
system $(\bfC,\cE_\eff,\cR^*_\eff)$ for the given family
$(\bfC,\cE,\cR_\eps^*)$ in the limit $\eps \to 0^+$. Here we use the
notion of \emph{convergence of gradient system in the sense of the
energy-dissipation principle}, shortly called
\emph{EDP-convergence}. This convergence notion was introduced in
\cite{DoFrMi19GSWE} and further developed in \cite{MiMoPe18?EFED,
    FreLie19?EDTS, MieSte19?CGED} and is based on the dissipation
  functionals
\[
\fD^\eta_\eps(c):= \int_0^T \! \big\{ \cR_\eps(c,\dot c) + \cR^*_\eps ( c,
\eta{-}\D\cE(c)) \big\} \dd t,
\] 
which are defined for curves $c \in \L^1([0,T];\bfC)$. Here $\calR_\eps$ is the
primal dissipation potential conjugated to $\calR_\eps^*$, see
\eqref{eq:LegFenTrafo}.  The
notion of \emph{EDP-convergence with tilting} now asks that the two
$\Gamma$-convergences $\cE_\eps \Gammlim \cE_\eff$ and
$\fD^\eta_\eps \Gammlim \fD_0^\eta$ (in suitable topologies) and that for
all $\eta $ the limit $\fD^\eta_0$ has the form
$\fD^\eta_0(c)=\int_0^T \!\big\{ \cR_\eff(c,\dot c) + \cR^*_\eff
(c,\eta{-}\D\cE(c))\big\} \dd t $; see Section \ref{su:Def.EDPcvg}
for the exact definitions of $\Gamma$-convergence and EDP-convergence. 

Our main result is Theorem \ref{thm:tiltEDPcvg}, which asserts
EDP-convergence with tilting and leading to the effective gradient system
$(\bfC, \calE_\eff,\calR_\eff)$  with 
\[
\cE_\eff=\cE \qquad \text{and} \qquad \cR_\eff^*(c,\xi) =
\cR^*_\slow(c,\xi) + \chi_{\Gamma_\fast^\perp}(\xi),
\]
where $\Gamma_\fast=\mathOP{span}\bigset{\alpha^r{-}\beta^r}{ r\in
  R_\fast}$, $\Gamma_\fast^\perp:=\bigset{\xi \in \R^{i_*}}{
  \forall\, \gamma\in \Gamma_\fast:\ \gamma\cdot\xi =0}$ and $\chi_A$ is the characteristic function of convex analysis taking 0 on $A$ and $\infty$ otherwise. 
The proof relies on three important observations:

(1) Tilting of the relative Boltzmann entropy $\cE$ by $\eta$,  i.e.\
replacing $\calE(c)$ by $\calE(c)-\eta {\cdot} c$, is equivalent to
changing the underlying equilibrium $c_*$ to 
$c_*^\eta:=(\ee^{\eta_i}c_i^*)_{i\in I}$ (see
\eqref{eq:TiltRelBoltz}), and $\cR^*_\eps$ is independent of
$c_*^\eta$.


(2) The factor $1/\eps$ in front of the fast reaction and in front of
$\calR^*_\fast$ allows for fast changes of $c$ in the corresponding directions.
These directions are given by the fast stoichiometric subspace $\Gamma_\fast$.
Defining an operator $Q_\fast : \R^{i_*} \to \R^{m_\fast}$ such that
$\mathOP{ker} Q_\fast = \Gamma_\fast$ and
$\mathOP{im} Q_\fast^\top= \Gamma_\fast^\perp$, a dissipation bound
$\fD_\eps^\eta(c^\eps)\leq M_\mafo{diss} < \infty$ does provide a uniform bound
on $Q_\fast c^\eps$ in $\W^{1,1}([0,T];\R^{m_\fast}) $, but not on $c^\eps$ as
a whole. The reason is that the blow-up of the dual dissipation potential
$\calR_\eps^*(c,\cdot)$ along $\Gamma_\fast^\perp$ is mirrored by a
degeneration of the primal dissipation potential $\calR_\eps$ along
$\Gamma_\fast$, i.e.\ for all $v \in \Gamma_\fast$ we have
$\calR_\eps (c,v)\to 0 $ for $\eps\to 0$.

(3) Since $\calR_\eps\geq 0$, a dissipation bound
$ \fD_\eps(c^\eps)\leq M_\mafo{diss} $ trivially implies the bound
$\int_0^T \frac1\eps \cR^*_\fast(c^\eps, {-}\D\cE(c^\eps))\dd t \leq
M_\mafo{diss}$. Analyzing the function
$c \mapsto \calR_\fast(c,{-}\rmD\calE(c))\geq 0$ for our mass-action reaction
kinetics shows that its $0$-set is exactly given by the set of equilibria of the
fast equation, namely $\scrEfa:=\bigset{c\in \bfC}{\bm R_\fast(c)=0}$. Hence, a
dissipation bound $\fD_\eps(c^\eps)\leq M_\mafo{diss} <\infty$ forces the
family $\big(c^\eps\big)_\eps$ to converge towards $\scrEfa$, i.e.\ in the
limit the fast reactions have to be in equilibrium for almost all times. 
\smallskip

Our analysis is based on the important assumption that the fast reaction
system $c'(\tau) = \bm R_\fast(c(\tau))$ has a unique equilibrium in each
flow-invariant subset $\bfC^\fast_\sfq:=\bigset{c\in \bfC}{ Q_\fast c =
  \sfq}$. This equilibrium is obtained as minimizer of $\cE$ on
$\bfC^\fast_\sfq$ and is denoted by $\Psi(\sfq)$. Thus, the \emph{unique
  fast-equilibrium condition} (UFEC) reads
\[
  \scrMsl:=\bigset{\Psi(\sfq)}{ \sfq \in Q_\fast \bfC} \ \overset{!!}= \
  \scrEfa:=\bigset{c\in \bfC}{\bm R_\fast(c)=0},
\]
which means that there are no ``additional boundary equilibria'', see
Assumption \ref{as:UniqueFastEquil}. 

The main difficulty is to show that the information in points (2) and (3) is
enough to obtain the compactness necessary for deriving liminf estimate for the
$\Gamma$-convergence $\fD_\eps \Gammlim \fD_0$ for the non-convex functionals
$\fD_\eps$. On the local level, one sees that (2) provides partial control of
the temporal oscillations of $\dot c^\eps$ via the bound on
$Q_\fast \dot c^\eps$ in $\L^1([0,T];\R^{m_\fast})$, whereas (3) provides
strong convergence towards the nonlinear manifold $\scrMsl$, which is
locally defined via $\D \cE(c) \in \Gamma_\fast^\perp$ (see Lemma
\ref{le:DEGamFast}). In summary, we are able to show that
$\fD_\eps(c^\eps)\leq M_\mafo{diss}<\infty$ implies that there exists a
subsequence such that $c^{\eps_n} \to \wt c$ in $\L^1([0,T];\bfC)$ and
$Q_\fast c^{\eps_n} \to \sfq$ uniformly in $\rmC^0([0,T];\R^{m_\fast})$, where
$\wt c(t)=\Psi(\sfq(t))$ with $\sfq \in \W^{1,1}([0,T];\R^{m_\fast})$.

As a corollary we obtain that the limiting evolution lies in $\scrMsl$ and is
governed by the reduced (or coarse grained) equation
$\dot\sfq = Q_\fast \bm R_\slow (\Psi(\sfq))$ described by the slow
variables $\sfq \in Q_\fast \bfC$ and a natural gradient structure $(Q_\fast
\bfC, \sfE,\sfR)$. Even on the level of the limiting
equations our result goes beyond those in \cite{Both03ILRC, DiLiZi18EGCG},
since we do not assume that solutions are strictly positive or that the
stoichiometric vectors $\gamma^r = \alpha^r{-}\beta^r$, $r\in R_\fast$, are
linearly independent.

For illustration, we close the introduction by a simple example involving
$i_*=5$ species and one fast and one slow reaction (see Section~\ref{su:Exa.EffGS} for the details under slightly more general conditions):
\[
 X_{1}+X_{2}\overset{\text{fast}}\rightleftharpoons X_{3}
  \qquad  \text{and}\qquad
  X_{3}+X_{4}\overset{\text{slow}}\rightleftharpoons X_{5}, 
\]
which gives rise to the stoichiometric vectors $\bfgamma^\fast=(1,1,-1,0,0)^\top$
and $\bfgamma^\slow=(0,0,1,1,-1)^\top$. Assuming the detailed-balance condition
with respect to the steady state $c_*=(1,1,1,1,1)^\top$, the  
reaction-rate equation takes the form
\[
  \dot{c}=-\frac{\kappa^\fast}\eps \big(c_{1}c_{2}-c_{3}\big)\bfgamma^\fast
  - \kappa^\slow \big(c_{3}c_{4} -c_{5}\big) \bfgamma^\slow \,.
\]
The limiting reaction system can be described by the slow variables
$\sfq {=} (c_1{+}c_3,c_2{+}c_3, c_4,c_5)^\top$ and reads
\[
\dot\sfq  = Q_\fast \bm{R}_\slow(\Psi(\sfq)) = -\kappa^\slow
\bigl(a(q_1,q_2)q_3 -q_4\bigr) \bfgamma^\slow,
\]
where the slow manifold takes the form
$\Psi(\sfq)=\bigl(q_1,q_2,a(q_1,q_2),q_3,q_4\bigr)$ and the reduced entropy is
$\sfE(\sfq)=\calE(\Psi(\sfq))$.

\section{Modeling of reaction systems}
\label{se:ModelRS}

We first introduce the classical notation for reaction systems
with reaction kinetics according to the mass-action law. After
briefly recalling our notation for \textit{gradient systems}, 
we show that based on the condition of detailed balance, the
reaction-rate equation is the gradient-flow equation for a suitable
gradient system. Next we introduce our class of fast-slow systems,
and finally we present a small, but nontrivial example in $\R^3$.

\subsection{Mass action law and stoichiometric subspaces}
\label{su:MassAction}

We consider $i_{*}\in\mathbb{N}$ species $X_{i}$ reacting with each
other by $r_{*}\in\mathbb{N}$ reactions. The set of species is denoted
by $I=\left\{ 1,\dots,i_{*}\right\} $, the set of reactions by
$R=\left\{ 1,\dots,r_{*}\right\} $, and the $r_{*}$ chemical reactions
are given by
\[
  \forall\, r\in R:\quad 
  \sum_{i=1}^{i_{*}}\alpha_{i}^{r}X_{i}\ \ \rightleftharpoons \ \ 
  \sum_{i=1}^{i_{*}}\beta_{i}^{r}X_{i},
\]
where the stoichiometric vectors
$\mathbf{\alpha}^{r},\mathbb{\beta}^{r}\in\N_{0}^{i_{*}}$ contain the
stoichiometric coefficients. The concentration $c_{i}$ of species
$X_{i}$ is nonnegative, the space of concentrations is denoted by
\[
 \bfC =[0,\infty[^{i_{*}}\ \subset\ \R^{i_{*}}\ ,
\]
which is the nonnegative cone of $\R^{i_{*}}$. Moreover, we introduce
$ \bfC _{+}:=\mathOP{int} \bfC  ={]0,\infty[}^{i_{*}}$, the interior
of the set of concentrations.

The mass-action law for reaction kinetics assumes that the
forward and backward reaction fluxes are proportional to the product
of the densities of the species,  i.e.
$j(c)_{r}=- k_{r}^{\mathrm{fw}}c^{\alpha^{r}}
+k_{r}^{\mathrm{bw}}c^{\beta^{r}}$, where for stoichiometric vectors 
$\delta \in \N_0^{i_+}$ the monomials $c^\delta$ are given by
$\prod_{i=1}^{i_*} c_i^{\delta_i}$. The  reaction-rate equation (RRE) of the
concentrations $c\in \bfC $  takes the form 
\begin{equation}
  \label{eq:RRE}
  \dot{c}=\bm{R}(c)=-\sum_{r=1}^{r_{*}}\left(k_{r}^{\mathrm{fw}} 
 c^{\alpha^{r}}-k_{r}^{\mathrm{bw}}c^{\beta^{r}}\right) 
 \left(\alpha^{r}-\beta^{r}\right),
\end{equation}
with given forward and backward reaction rates
$k_{r}^{\mathrm{fw}},k_{r}^{\mathrm{bw}}>0$.

For each of the $r$ reactions we introduce the stoichiometric 
vector $\gamma^{r}:=\alpha^{r}-\beta^{r}\in \Z^{i_{*}}$.  The
span of all vectors $\gamma^{r}$ is the \emph{stoichiometric
  subspace} $\Gamma\subset\R^{i_{*}}$, i.e.\
$\Gamma:=\mathOP{span}\bigset{ \gamma^{r} }{ r\in R } $. We do not
assume any properties of the stoichiometric vectors, in particular
they are not assumed to be linearly independent.

%
%
Conservation directions are vectors $q\in\R^{i_{*}}$ such
that $q\in\Gamma^{\perp}$ (also written $q\perp\Gamma$), where the
annihilator $\Gamma^{\perp}$ is defined as
$\Gamma^{\perp}=\bigset{ q\in\R^{i_{*}} }{ \forall\,\gamma\in\Gamma: \
  q\cdot\gamma=0 } $. By construction we have
$\bmR(c) \in \Gamma$, thus for all solutions $t \mapsto c(t)$ of the
RRE \eqref{eq:RRE}, the value of $q\cdot c(t)$ is constant, i.e.\
$q\cdot c$ is a conserved quantity for \eqref{eq:RRE}. 
Fixing a basis $\left\{ q_{1},\dots,q_{m}\right\} $ of
$\Gamma^\perp$, we introduce a matrix $Q\in\R^{m\times i_{*}}$ by
defining its adjoint $Q^\top=\left(q_{1},\dots,q_{m}\right)$. By
construction, $Q^\top:\R^m \to \R^{i_*}$ is injective,
$Q: \R^{i_*} \to \R^m$ is surjective, and
$\mathOP{ker} Q=\Gamma$. The image of the nonnegative cone
$ \bfC $ under $Q$ is denoted by $\mathscr{Q} $, i.e.\
$Q: \bfC  \to \mathscr{Q}\subset\R^{m}$.  Fixing a vector
$q\in\mathscr{Q}$, we define the stoichiometric subsets 
\[
 \bfC _{q}:=\bigset{ c\in \bfC }{ Qc=q } \, .
\]
They provide a decomposition
$\bfC={\larger\cup}_{q\in\mathscr{Q}} \bfC _{q}$ into affine sets that are
invariant under the flow of the RRE \eqref{eq:RRE}.\smallskip

\textbf{Notation:} In the whole paper we consider all vectors as column
vectors. In particular $\D\cE(c)\in X^*$ is also a column vector although it is
an element of the dual space and might be understood as a covector.

\subsection{Notations for gradient systems}
\label{su:NotationsGS}

Following \cite{Miel11GSRD, Miel16EGCG}, we call a triple $( \sfX ,\cE,\cR)$
a (generalized) \emph{gradient system} (GS) if 
\begin{enumerate}\itemsep-0.2em
\item the state space $ \sfX $ is a closed and convex
  subspace of a Banach space $X$,  
\item $\cE: \sfX  \to \R_{\infty}:=\R\cup\{\infty\}$ is a
  sufficiently smooth functional (such as a free energy,
  a relative entropy, or a negative entropy, etc.),
\item $\cR: \sfX \times X \to \R_{\infty}$ is a dissipation
  potential, which means that for any $u\in \sfX $ the functional
  $\cR(u,\cdot): X  \to \R_{\infty}$ is lower
  semicontinuous, nonnegative and convex, and satisfies $\cR(u,0)=0$.
\end{enumerate}
The dynamics of a gradient system is given by the associated
\emph{gradient-flow equation} that can be formulated in three
different, but equivalent ways: as an equation in $X$, in $\R$, or in
$X^{*}$ (the dual Banach space of $X$), respectively:
\begin{subequations}
\label{eq:GFE.gen}
\begin{align}
\text{(I)\ \ \  \textbf{Force balance in }}X^{*}:&&
  0\in\partial_{\dot{u}}\cR(u,\dot{u})+\D\cE(u) && \in X^{*},
\\  
\text{(II) \  \textbf{ Power balance in }}\R:&& 
 \cR(u,\dot{u})+\cR^{*}(u,-\D\cE(u))=-\la\D\cE(u),\dot{u}\ra&& \in \R,
\\ 
\text{(III) \textbf{ Rate equation in }}X:  
  &&\dot{u}\in\partial_{\xi}\cR^{*}(u,-\D\cE(u))&& \subset X. 
\end{align}
\end{subequations}
Here, $\cR^*$ is the \emph{dual dissipation potential} obtained by
the Legendre-Fenchel transform 
\begin{equation}
  \label{eq:LegFenTrafo}
\cR^*(u,\xi) :=\sup_{ v\in X}\big\{\langle \xi,  v\rangle -\cR(u,v)\}. 
\end{equation}
In general, the partial derivatives $\pl_{\dot u}\cR(u,\dot u)$ and
$\pl_\xi \cR^*(u,\xi)$ are the possibly set-valued convex subdifferentials.

For a given evolution equation $\dot u = \bm V(u)$ we say that it has
a \emph{gradient structure} if there exists a gradient system
$(\sfX,\cE,\cR)$ such that the evolution equation is the gradient-flow
equation for this gradient system, namely $\bm V(u)=\pl_\xi
\cR^*(u,{-}\D\cE(u))$. We emphasize that a given evolution equation may
have none or many gradient structures; see Remark \ref{re:SeveralGS}
for the case of our nonlinear reaction systems. 

Integrating the power balance (II) in time over $[0,T]$ and using
the chain rule for the time-derivative of $t\mapsto\cE(u(t))$, we
obtain another equivalent formulation of the dynamics of the gradient
system, which is called \textit{Energy-Dissipation-Balance}: 
\begin{align}
\label{eq:EDB.gen}
\mathrm{(EDB)} \qquad \cE(u(T))+\int_{0}^T \!\!\big\{\cR(u,\dot{u})+
   \cR^{*}(u,-\D\cE(u))\big\} \dd t=\cE(u(0)).
\end{align}
This gives rise to the
\textit{dissipation functional} 
\begin{align*}
\fD(u):=\int_{0}^T  \! \big\{\cR(u,\dot{u})+\cR^{*}(u,-\D\cE(u))\big\}\dd t,
\end{align*}
which is now defined on trajectories $u:[0,T]\mapsto \sfX$. 

The following \emph{Energy-Dissipation Principle} (EDP) states that,
under natural technical conditions, 
solving the EDB \eqref{eq:EDB.gen} is equivalent to solving any of
the three versions of the gradient-flow
equation \eqref{eq:GFE.gen}.

\begin{thm}[{Energy-dissipation principle, cf.\
  \cite[Prop.\,1.4.1]{AmGiSa05GFMS} or
  \cite[Th.\,3.2]{Miel16EGCG}}]\label{th:EDP}%
\mbox{}
Assume that $\sfX$ is a closed convex subset of $X=\R^{i_*}$, that $\cE\in
\rmC^1(\sfX,\R)$, and that the dissipation potential $\cR(u,\cdot)$ is
superlinear uniformly in $u\in \sfX$. Then, a function $u \in
\rmW^{1,1}([0,T];\R^{i_*})$ is a solution of the gradient-flow equation 
\eqref{eq:GFE.gen} if and only if $u$ solves the EDB \eqref{eq:EDB.gen}.
\end{thm}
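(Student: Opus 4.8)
The plan is to reduce the equivalence to a single pointwise inequality — the Fenchel--Young inequality — together with the chain rule for $t\mapsto\cE(u(t))$. First I would record that by the very definition \eqref{eq:LegFenTrafo} of the Legendre--Fenchel transform, for every $u\in\sfX$, $v\in X$ and $\xi\in X^*$ one has
\[
  \cR(u,v)+\cR^*(u,\xi)\ \geq\ \langle\xi,v\rangle ,
\]
with equality if and only if $v\in\pl_\xi\cR^*(u,\xi)$ (equivalently, $\xi$ lies in the subdifferential of the lower semicontinuous convex function $\cR(u,\cdot)$ at $v$). The uniform superlinearity of $\cR(u,\cdot)$ guarantees that its conjugate $\cR^*(u,\cdot)$ is finite-valued and continuous on all of $X^*$, so that $t\mapsto\cR^*\big(u(t),{-}\D\cE(u(t))\big)$ is finite-valued; together with $\cR,\cR^*\geq0$ this makes the integrand of $\fD$ a well-defined element of $[0,\infty]$ for a.e.\ $t$.

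Next I would invoke the chain rule: since $\cE\in\rmC^1(\sfX,\R)$ is locally Lipschitz on the convex set $\sfX\subset\R^{i_*}$ and $u\in\rmW^{1,1}([0,T];\R^{i_*})$ is absolutely continuous, the composition $t\mapsto\cE(u(t))$ is absolutely continuous, its a.e.\ derivative $\langle\D\cE(u(t)),\dot u(t)\rangle$ lies in $\L^1(0,T)$, and
\[
  \cE(u(0))-\cE(u(T))\ =\ \int_0^T\langle{-}\D\cE(u(t)),\dot u(t)\rangle\,\dd t .
\]
Subtracting this identity from the Energy-Dissipation-Balance \eqref{eq:EDB.gen} shows that $u$ solves $\mathrm{(EDB)}$ if and only if $\fD(u)<\infty$ and
\[
  \int_0^T\Big\{\cR(u,\dot u)+\cR^*(u,{-}\D\cE(u))-\langle{-}\D\cE(u),\dot u\rangle\Big\}\,\dd t\ =\ 0 .
\]
Both implications then drop out. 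If $u$ solves the rate equation~(III) in \eqref{eq:GFE.gen}, i.e.\ $\dot u(t)\in\pl_\xi\cR^*\big(u(t),{-}\D\cE(u(t))\big)$ for a.e.\ $t$, the Fenchel--Young equality case makes the integrand of the last display vanish a.e.; being a.e.\ equal to the $\L^1$-function $\langle{-}\D\cE(u),\dot u\rangle$ it is integrable, so $\fD(u)<\infty$, the displayed identity holds, and adding back the chain-rule identity yields $\mathrm{(EDB)}$. Conversely, if $u$ solves $\mathrm{(EDB)}$, then $\fD(u)<\infty$ and the displayed integral vanishes; but its integrand is $\geq0$ pointwise a.e.\ by Fenchel--Young, so it must vanish for a.e.\ $t$, which is exactly the equality case $\dot u(t)\in\pl_\xi\cR^*\big(u(t),{-}\D\cE(u(t))\big)$ — the rate equation. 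The equivalences (I)$\Leftrightarrow$(II)$\Leftrightarrow$(III) recorded in \eqref{eq:GFE.gen} then transfer the conclusion to all three formulations.

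The only genuinely delicate points are measurability and integrability bookkeeping: one must know that $t\mapsto\cR(u(t),\dot u(t))$ and $t\mapsto\cR^*\big(u(t),{-}\D\cE(u(t))\big)$ are Lebesgue measurable — which holds because $\cR$ and $\cR^*$ are normal integrands and $u,\dot u,\D\cE(u)$ are measurable — and that the dissipation functional is finite along a solution, which, as noted, is forced by the Fenchel--Young equality. Thus the substance of the statement lies almost entirely in having the chain rule available for $\cE\circ u$ with $u\in\rmW^{1,1}$; in the finite-dimensional setting $X=\R^{i_*}$ assumed here this is elementary, so I do not expect a serious obstacle — it is precisely in the infinite-dimensional case that one would instead need extra structure or the De~Giorgi variational-interpolation machinery of \cite{AmGiSa05GFMS}.
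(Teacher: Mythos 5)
Your argument is correct and is exactly the standard proof of the energy-dissipation principle: Fenchel--Young gives pointwise nonnegativity of the defect $\cR(u,\dot u)+\cR^*(u,{-}\D\cE(u))+\langle\D\cE(u),\dot u\rangle$, the $\rmC^1$ chain rule (elementary here since $X=\R^{i_*}$ and $u\in\rmW^{1,1}$) converts EDB into the vanishing of its integral, and the equality case of Fenchel--Young identifies this with the rate equation (III). The paper does not prove the theorem but cites \cite{AmGiSa05GFMS,Miel16EGCG}, whose proofs follow precisely this route, so your proposal coincides with the intended argument.
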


\subsection{The detailed balance condition induces gradient structures}
\label{su:DBCiGS}

Already in Section \ref{su:MassAction}, we have assumed that each
reaction occurs in both forward and backward directions. Such reaction
systems are called \emph{weakly reversible}. A much stronger assumption is the
so-called \emph{detailed-balance condition} which states that there is
a strictly positive state $c_*=(c^*_i)\in \bfC_+$ in which
all reactions are in equilibrium, i.e.\ $j_r(c_*)=0$ for all $r$:
\begin{equation}
  \label{eq:DBC1}
\text{(DBC)}\qquad \exists\, c_* \in \bfC_+\ \forall\, r\in \R:\quad  
k_{r}^{\mathrm{fw}}c_{*}^{\alpha^{r}}=k_{r}^{\mathrm{bw}}c_{*}^{\beta^{r}}.
\end{equation}
Under this assumption, one can rewrite the RRE \eqref{eq:RRE} in the symmetric form
\begin{equation}
  \label{eq:RRE.sym}
\begin{aligned} & \dot c = \bmR(c)=- \sum_{r=1}^{r_*} \wh\kappa_r \,\delta^*_r
  \Big( \,\frac{c^{\alpha^r}}{c_*^{\alpha^r}}  -
   \frac{c^{\beta^r}}{c_*^{\beta^r}}\,\Big)
  \,\big(\alpha^r- \beta^r\big) 
\\
&\text{with } \ \delta^*_r = \big(c_*^{\alpha^r}c_*^{\beta^r}\big)^{1/2}  \ \text{
  and } \  \wh\kappa_r:=k_{r}^{\mathrm{fw}}
  c_*^{\alpha^r}/\delta^*_r   =k_{r}^{\mathrm{bw}}c_*^{\beta^r}/\delta^*_r\,.
\end{aligned} 
\end{equation}
Subsequently, we will use the notion of a \emph{reaction system satisfying
the detailed-balance condition}, or shortly a detailed-balance
reaction system. 

\begin{defn}[Detailed-balance reaction systems (DBRS)]\label{def:DetBalReaSys}
  For $i_*,r_*\in \N$ consider the stoichiometric matrices $A = \big( \alpha^r_i
  \big) \in \N_0^{i_*\ti r_*}$ and $ B = \big( \beta^r_i \big) \in
  \N_0^{i_*\ti r_*}$ and the vectors $c_*=(c^*_i) \in
  {]0,\infty[}^{i_*}$ and $\wh \kappa=(\wh \kappa_r)\in
  {]0,\infty[}^{r_*}$. Then, the quadruple $(A,B,c_*,\wh\kappa)$ is
  called a \emph{detailed-balance
reaction systems} with $i_*$ species and $r_*$ reactions. The
associated RRE is given by \eqref{eq:RRE.sym}.  
\end{defn} 

It was observed in \cite{Miel11GSRD} (but see also
\cite[Eqn.\,(103)+(113)]{OttGrm97DTCF2} and
\cite[Sec.\,VII]{Yong08ICPD} for earlier, but implicit statements)
that RREs in this form have a gradient structure. Here we will use the
gradient structure derived in \cite{MPPR17NETP} by a large-deviation
principle from a microscopic Markov process. In Remark
\ref{re:SeveralGS} we will shortly comment on other possible gradient
structures.

With $\bfC$ as above we define the energy as the relative Boltzmann
entropy 
\begin{subequations}
\label{eq:RRE.GS}
\begin{align}
\label{eq:RRE.GS.a}
\cE:\left\{ \ba{cccc}  \bfC  &\to& \R,\\
c &\mapsto & \sum_{i=1}^{i_{*}}c_{i}^{*} \LB 
(c_{i}/c_{i}^{*}), \ea\right. 
\quad \text{with } 
 \LB (r)=\begin{cases}
   r\log r-r+1 & \text{for }r>0, \\
             1 & \text{for }r=0,\\
        \infty &\text{for } r<0. \end{cases}
\end{align}
The dissipation functional $\cR$ will be defined by specifying the
dual dissipation potential $\cR^*$ of ``cosh-type'' as
\begin{align}
\label{eq:RRE.GS.b}
\cR^{*}: \left\{ \ba{ccc} \!\! \bfC \times\R^{i_{*}}\!\! &\to& \R,\\  
(c,\xi)&\mapsto &\!\!\ds\sum_{r=1}^{r_*} \wh\kappa_{r} \sqrt{c^{\alpha^{r}}
  c^{\beta^{r}}}\:
 \sfC^{*}\big(\,(\alpha^{r}{-}\beta^{r})\cdot\xi\,\big),
\ea\right. 
\ \ 
\text{ with }  \sfC^{*}(\zeta)=4\cosh\left(\zeta/2\right)-4\, .
\end{align}
\end{subequations}
We will often use the following formulas for $ \sfC^*$:
\begin{align}
\label{eq:C.formulas}
\begin{aligned}
\text{(a) }& 
  \sfC^{*}(\log p{-}\log q)  = 2 \frac{\left(\sqrt p-\sqrt q 
 \right)^{2}}{\sqrt{pq}} , \\
\text{(b) }&  
\big( \sfC^*\big)'(\zeta)  =\ee^{\,\zeta/2}- \ee^{-\zeta/2}, 
\qquad
\text{(c) } \big( \sfC^{*}\big)'(\log p{-}\log
q)=\frac{p-q}{\sqrt{pq}}.
\end{aligned}
\end{align}

The following result is also easily checked by direct calculations
using~\eqref{eq:C.formulas}(b) and the logarithm rules 
\begin{equation}
  \label{eq:Logarithm}
  \alpha^{r}\D\cE(c)=\log(c^{\alpha^{r}})-\log(c_{*}^{\alpha^{r}}) =
\log \big(c^{\alpha^{r}}/c_{*}^{\alpha^{r}} ).
\end{equation}
This identity also follows as a special case of Remark \ref{re:SeveralGS}. 
The primal dissipation potential $\cR$ is given by the
Legendre-Fenchel transformation:  
\begin{equation}
  \label{eq:def.cR}
  \cR(u,v)=\sup\bigset{ \xi\cdot v -\cR^*(c,\xi)}{ \xi \in \R^{i_*}}. 
\end{equation}

\begin{prop}[{Gradient structure, \cite[Thm.\,3.6]{MPPR17NETP}}] 
\label{pr:GS.cosh}
The RRE \eqref{eq:RRE.sym} is the gradient-flow equation associated with the
cosh-type gradient system $(\bfC,\cE,\cR)$ with $\cE$ and $\cR$ given in
\eqref{eq:RRE.GS}, where $\calR$ and $\calR^*$ are related by
Legendre-Fenchel transform, see \eqref{eq:LegFenTrafo}.
\end{prop}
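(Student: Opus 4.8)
The plan is to verify directly that the rate equation \eqref{eq:RRE.sym} coincides with the rate equation (III) of the gradient-flow equation \eqref{eq:GFE.gen}, namely $\dot c = \pl_\xi \cR^*\big(c,{-}\D\cE(c)\big)$, since by Theorem \ref{th:EDP} (the Energy-Dissipation Principle) this identification of the vector fields is equivalent to the EDB formulation and hence to all three formulations \eqref{eq:GFE.gen}. First I would record that $\cE$ given by \eqref{eq:RRE.GS.a} is $\rmC^1$ on $\bfC_+$ with $\D\cE(c) = \big(\log(c_i/c_i^*)\big)_{i\in I}$, so that for each reaction index $r$ one has, by the logarithm rule \eqref{eq:Logarithm} applied to both $\alpha^r$ and $\beta^r$,
\[
  (\alpha^r{-}\beta^r)\cdot\big({-}\D\cE(c)\big) = \log\!\Big(\frac{c^{\beta^r}}{c_*^{\beta^r}}\Big) - \log\!\Big(\frac{c^{\alpha^r}}{c_*^{\alpha^r}}\Big).
\]
Since $\sfC^*$ is smooth and convex with $\sfC^*(0)=0$, the dual dissipation potential $\cR^*(c,\cdot)$ from \eqref{eq:RRE.GS.b} is differentiable in $\xi$, and the chain rule gives
\[
  \pl_\xi\cR^*\big(c,{-}\D\cE(c)\big) = \sum_{r=1}^{r_*} \wh\kappa_r \sqrt{c^{\alpha^r}c^{\beta^r}}\;\big(\sfC^*\big)'\!\Big((\alpha^r{-}\beta^r)\cdot\big({-}\D\cE(c)\big)\Big)\,(\alpha^r{-}\beta^r).
\]

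The key computational step is then to evaluate $\big(\sfC^*\big)'$ at the logarithmic argument using formula \eqref{eq:C.formulas}(c): with $p = c^{\beta^r}/c_*^{\beta^r}$ and $q = c^{\alpha^r}/c_*^{\alpha^r}$ one obtains $\big(\sfC^*\big)'(\log p - \log q) = (p-q)/\sqrt{pq}$, where $\sqrt{pq} = \sqrt{c^{\alpha^r}c^{\beta^r}}/\sqrt{c_*^{\alpha^r}c_*^{\beta^r}} = \sqrt{c^{\alpha^r}c^{\beta^r}}/\delta_r^*$. Multiplying by the prefactor $\wh\kappa_r\sqrt{c^{\alpha^r}c^{\beta^r}}$, the factors $\sqrt{c^{\alpha^r}c^{\beta^r}}$ cancel and one is left with $\wh\kappa_r\,\delta_r^*\,(p-q) = \wh\kappa_r\,\delta_r^*\big(c^{\beta^r}/c_*^{\beta^r} - c^{\alpha^r}/c_*^{\alpha^r}\big)$, so that
\[
  \pl_\xi\cR^*\big(c,{-}\D\cE(c)\big) = -\sum_{r=1}^{r_*}\wh\kappa_r\,\delta_r^*\Big(\frac{c^{\alpha^r}}{c_*^{\alpha^r}} - \frac{c^{\beta^r}}{c_*^{\beta^r}}\Big)(\alpha^r{-}\beta^r) = \bmR(c),
\]
which is exactly the right-hand side of \eqref{eq:RRE.sym}. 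Since $\cR$ is defined as the Legendre--Fenchel conjugate of $\cR^*$ via \eqref{eq:def.cR}, the equivalence of (I), (II), (III) in \eqref{eq:GFE.gen} is automatic, and this completes the identification.

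The main obstacle is not the algebra but the boundary behaviour: $\cE$ and $\D\cE$ are only smooth on the open cone $\bfC_+$, whereas $\bfC$ includes its boundary, and on $\pl\bfC$ some components $c_i$ vanish, making $\log(c_i/c_i^*)$ ill-defined. One should therefore be careful to interpret the gradient structure and the formula $\D\cE(c) = (\log(c_i/c_i^*))_i$ on $\bfC_+$, noting that $\cR^*(c,\cdot)$ degenerates on $\pl\bfC$ (the monomials $\sqrt{c^{\alpha^r}c^{\beta^r}}$ vanish there) in just such a way that $\bmR(c)$ remains well-defined and continuous up to the boundary and points back into $\bfC$; the cited reference \cite[Thm.\,3.6]{MPPR17NETP} handles the precise functional-analytic setting, and one invokes it (or the density of $\bfC_+$ in $\bfC$ together with continuity of $\bmR$) to extend the identity to all of $\bfC$. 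With that caveat the proposition follows directly from \eqref{eq:C.formulas}(b)--(c) and the logarithm rule \eqref{eq:Logarithm}.
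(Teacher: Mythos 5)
Your verification is correct and is essentially the paper's own argument: the paper states the result with reference to \cite[Thm.\,3.6]{MPPR17NETP}, remarks that it is "easily checked by direct calculations using \eqref{eq:C.formulas}(b) and the logarithm rules" \eqref{eq:Logarithm}, and carries out exactly this computation (for a general family $\Phi_r$, with $\Phi_r=\sfC^*$ giving the cancellation $\delta_r^*\Lambda_r = \sqrt{c^{\alpha^r}c^{\beta^r}}$) in Remark \ref{re:SeveralGS}. Your specialization of that calculation, including the sign bookkeeping and the caveat that $\D\cE$ is only defined on $\bfC_+$, matches the intended proof.
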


An important property of this gradient structure, which is not shared with the
ones discussed in Remark \ref{re:SeveralGS} below, is that the dissipation
potential $\cR^*$ does not depend on the equilibrium state $c_*$, see also
Section \ref{su:Example} for an example. This property might seem to
be an artifact of our special choice of the definition of $\wh\kappa_r$ in
terms of $c_*$; however, it is an intrinsic property that will be
even more relevant when we use ``tilting'' in our main result Theorem
\ref{thm:tiltEDPcvg}, which states the ``EDP-convergence with tilting''. In
\cite[Prop.\,4.1]{MieSte19?CGED} it was shown that this tilt-invariance is a
special property of the cosh-gradient structure; see also Remark
\ref{re:SeveralGS}.

Moreover, we have identified $c_*$ as a ``static'' property of
the RRE \eqref{eq:RRE.sym}, whereas the stoichiometric matrices
$A,B\in \N_0^{i_*\ti r_*}$ and the reaction coefficients
$\wh\kappa_r$ encode the ``dissipative'' properties.

Because we are going to use the energy-dissipation principle, we
explicitly state the cosh-type dissipation functional given by
\begin{equation}
  \label{eq:fD.cosh}
  \fD(c)=\int_{0}^T \!\! \big\{
\cR(c,\dot{c})+\cR^{*}(c,-\D\cE(c))\big\} \dd t \ = \ 
 \int_{0}^T \!\! \big\{
\cR(c,\dot{c})+\calS(c) \big\} \dd t .
\end{equation}
We will mostly write the dissipation functional $\fD$ in
the first ``$\cR{+}\cR^*$ form'' to highlight its duality
structure. However, for mathematical purposes it will be advantageous
to use the second representation via the \emph{slope function} 
\begin{equation}
  \label{eq:SlopeFcn}
  \calS(c): \bfC \to {[0,\infty[};\ c \mapsto
  \calS(c):=\sum_{r=1}^{r_*} 2\wh\kappa_r \delta_r^*\Big( 
  \big(\frac{c^{\alpha^r}}{c_*^{\alpha^r}}\big)^{1/2} -
  \big(\frac{c^{\beta^r}}{c_*^{\beta^r}}\big)^{1/2}  \Big)^2,
\end{equation}
which is continuous on $\bfC$ and satisfies
$\calS(c)=\cR^*(c,{-}\D\cE(c))$ for $c\in \bfC_+$. 
Sometimes $\calS$ is also called the
(discrete) Fisher information as it corresponds to $\int_\Omega 4k |\nabla
\sqrt\rho|^2 \dd x = \int_\Omega k |\nabla\rho|^2/\rho \dd x $ in the diffusion case. 
 
A special feature of DBRS is that all equilibria have the property
that they provide an equilibrium to each individual reaction $r\in R$,
where we do not need linear independence of $(\gamma^r)_{r\in R}$, see
also \cite[Sec.\,2]{MiHaMa15UDER} or \cite{Miel17UEDR}.

\begin{lem}[Equilibria in DBRS]\label{le:Equilibria}
Let $(A,B,c_*,\wh\kappa)$ be a DBRS with slope function $\calS$
defined in \eqref{eq:SlopeFcn}. Then,  the following identities hold: 
\begin{align}
  \label{eq:R=0.S*=0}
\begin{aligned}  \mathscr{E}_{\bm R}& :=\bigset{c\in \bfC}{
    \bm{R}(c)=0} \ = \ \bigset{c\in \bfC}{
    \calS(c)=0} \\
& = \ \bigset{c\in \bfC}{ \forall\,r\in R:\
    \tfrac{c^{\alpha^r}}{c_*^{\alpha^r}} =
    \tfrac{c^{\beta^r}}{c_*^{\beta^r}} } \, . 
\end{aligned}
\end{align}
Moreover, if $\wt c_*\in \mathscr{E}_{\bm R} \cap
\bfC_+$, then the two DBRS $(A,B,c_*,\wh\kappa)$ and $
(A,B,\wt c_*,\wh\kappa)$  generate the same RRE.
\end{lem}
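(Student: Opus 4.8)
The plan is to prove the two displayed chains of equalities separately, the first by a direct algebraic argument using the symmetric form \eqref{eq:RRE.sym} of the RRE and the cancellation structure of the cosh-type gradient system, the second by noting that all characterizations in \eqref{eq:R=0.S*=0} depend on $c_*$ only through the monomials $c_*^{\alpha^r}, c_*^{\beta^r}$ in a way that is invariant under replacing $c_*$ by another equilibrium.

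For the first chain, I would begin with the middle--to--right equality, which is essentially a definition: by \eqref{eq:SlopeFcn} the slope $\calS(c)$ is a sum of non-negative terms $2\wh\kappa_r\delta_r^*\big((c^{\alpha^r}/c_*^{\alpha^r})^{1/2}-(c^{\beta^r}/c_*^{\beta^r})^{1/2}\big)^2$ with $\wh\kappa_r,\delta_r^*>0$, so $\calS(c)=0$ holds if and only if each summand vanishes, i.e.\ $(c^{\alpha^r}/c_*^{\alpha^r})^{1/2}=(c^{\beta^r}/c_*^{\beta^r})^{1/2}$ for every $r$, which (taking squares, all quantities being non-negative) is exactly $c^{\alpha^r}/c_*^{\alpha^r}=c^{\beta^r}/c_*^{\beta^r}$ for all $r\in R$. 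For the left equality, the inclusion ``$\supseteq$'' is immediate: if every reaction flux $c^{\alpha^r}/c_*^{\alpha^r}-c^{\beta^r}/c_*^{\beta^r}$ vanishes, then every term in the sum defining $\bm R(c)$ in \eqref{eq:RRE.sym} vanishes, hence $\bm R(c)=0$. The nontrivial inclusion is ``$\subseteq$'': one must show that $\bm R(c)=0$, i.e.\ $\sum_r \wh\kappa_r\delta_r^*\,\big(c^{\alpha^r}/c_*^{\alpha^r}-c^{\beta^r}/c_*^{\beta^r}\big)\gamma^r=0$ (a cancellation among possibly linearly dependent vectors $\gamma^r$), already forces each scalar coefficient to vanish. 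The standard argument — and the one I expect is the main obstacle, since here one cannot simply use linear independence of the $\gamma^r$ — is to test against $\D\cE(c)$: pairing $\bm R(c)$ with $\D\cE(c)=(\log(c_i/c_i^*))_i$ and using the logarithm rule \eqref{eq:Logarithm} gives $\gamma^r\cdot\D\cE(c)=\log(c^{\alpha^r}/c_*^{\alpha^r})-\log(c^{\beta^r}/c_*^{\beta^r})$, so that
\[
0 \;=\; \bm R(c)\cdot\D\cE(c) \;=\; -\sum_{r=1}^{r_*}\wh\kappa_r\delta_r^*\Big(\frac{c^{\alpha^r}}{c_*^{\alpha^r}}-\frac{c^{\beta^r}}{c_*^{\beta^r}}\Big)\Big(\log\frac{c^{\alpha^r}}{c_*^{\alpha^r}}-\log\frac{c^{\beta^r}}{c_*^{\beta^r}}\Big).
\]
Since $(a-b)(\log a-\log b)\ge 0$ for all $a,b>0$ with equality only if $a=b$, and $\wh\kappa_r\delta_r^*>0$, every summand must vanish, so $c^{\alpha^r}/c_*^{\alpha^r}=c^{\beta^r}/c_*^{\beta^r}$ for each $r$; this lands us in the right-hand set. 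A small technical point is that $\D\cE(c)$ is finite only on $\bfC_+$, so for $c\in\bfC\setminus\bfC_+$ (some $c_i=0$) one argues separately: if $c^{\alpha^r}=0$ for some $r$ then $c^{\beta^r}$ must be controlled via the corresponding component of $\bm R(c)=0$, or more cleanly one passes to the subsystem supported on $\{i:c_i>0\}$ and applies the interior argument there; either way the monomial condition is recovered.

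For the last sentence, suppose $\wt c_*\in\mathscr{E}_{\bm R}\cap\bfC_+$. By the just-proved characterization, $\wt c_*^{\,\alpha^r}/c_*^{\alpha^r}=\wt c_*^{\,\beta^r}/c_*^{\beta^r}$ for all $r$; call this common value $\theta_r>0$. Then $\wt c_*^{\,\alpha^r}=\theta_r c_*^{\alpha^r}$ and $\wt c_*^{\,\beta^r}=\theta_r c_*^{\beta^r}$, whence $\wt\delta_r^*=(\wt c_*^{\,\alpha^r}\wt c_*^{\,\beta^r})^{1/2}=\theta_r\delta_r^*$ and, for any $c\in\bfC$,
\[
\wh\kappa_r\,\wt\delta_r^*\Big(\frac{c^{\alpha^r}}{\wt c_*^{\,\alpha^r}}-\frac{c^{\beta^r}}{\wt c_*^{\,\beta^r}}\Big) \;=\; \wh\kappa_r\,\theta_r\delta_r^*\cdot\frac1{\theta_r}\Big(\frac{c^{\alpha^r}}{c_*^{\alpha^r}}-\frac{c^{\beta^r}}{c_*^{\beta^r}}\Big) \;=\; \wh\kappa_r\,\delta_r^*\Big(\frac{c^{\alpha^r}}{c_*^{\alpha^r}}-\frac{c^{\beta^r}}{c_*^{\beta^r}}\Big),
\]
so the two right-hand sides of \eqref{eq:RRE.sym} coincide term by term, i.e.\ the DBRS $(A,B,c_*,\wh\kappa)$ and $(A,B,\wt c_*,\wh\kappa)$ generate the same RRE. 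This completes the proof.
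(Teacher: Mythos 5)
Your interior argument, the equivalence of $\calS(c)=0$ with the monomial conditions, the easy inclusion into $\{\bm R(c)=0\}$, and the final statement about replacing $c_*$ by $\wt c_*$ are all correct; on $\bfC_+$ your direct pairing of $\bm R(c)$ with $\D\cE(c)$ together with $(a-b)(\log a-\log b)\ge 0$ is just an unfolded version of the paper's Step 1, which instead invokes the Fenchel identity $\cR(c,\bm R(c))+\cR^*(c,-\D\cE(c))=-\D\cE(c)\cdot\bm R(c)$ and sets $\bm R(c)=0$.

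The genuine gap is the boundary case $c\in\pl\bfC$ with $\bm R(c)=0$, which you dismiss as a small technical point. Neither of your suggested fixes works as stated. If $c^{\alpha^r}=0$ but $c^{\beta^r}>0$ (or vice versa), the flux of reaction $r$ is \emph{nonzero}, and since the vectors $\gamma^r$ may be linearly dependent and several reactions share species, no single ``corresponding component'' of $\bm R(c)=0$ isolates that flux: cancellations between different reactions are precisely what has to be excluded. Likewise, ``passing to the subsystem supported on $\{i:\,c_i>0\}$'' fails, because a reaction whose $\alpha^r$ is supported on the positive species while $\beta^r$ touches a zero species still has flux $c^{\alpha^r}/c_*^{\alpha^r}>0$ and contributes to the components of $\bm R(c)$ at the positive species; the reduced system is not closed, so the interior argument does not apply to it. The paper devotes its Step 2 exactly to this point: it regularizes $c_\delta=c+\delta c_*\in\bfC_+$, uses $|\bm R(c_\delta)|\le C_0\delta$, $\cR\ge 0$ and $|\D\cE(c_\delta)|\le i_*\log(1/\delta)$ to obtain $\calS(c_\delta)\le i_* C_0\,\delta\log(1/\delta)\to 0$, and then concludes by the continuity of $\calS$ on all of $\bfC$. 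Alternatively, your pairing argument can be repaired by testing with $\xi^{(M)}$ defined by $\xi^{(M)}_i=\log(c_i/c_i^*)$ where $c_i>0$ and $\xi^{(M)}_i=-M$ where $c_i=0$: in $-\bm R(c)\cdot\xi^{(M)}=0$ every summand is, for large $M$, nonnegative, and any reaction with exactly one vanishing monomial produces a term growing linearly in $M$, a contradiction; but some argument of this kind must actually be carried out.
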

\begin{proof} \emph{Step 1.}
For $c \in \bfC_+$ the gradient structure $\bm
R(c)=\partial\cR^*(c,-\D\cE(c))$ of the DBRS gives 
\begin{equation}
  \label{eq:Duality}
  \cR(c,\bm R(c)) + \cR^*(c,-\D\cE(c)) = - \D\cE(c)\cdot \bm R(c).
\end{equation}
Thus, $\bm R(c)=0$ implies $\calS(c)= \cR^*(c,-\D\cE(c)) =0$, and
since $\calS(c)$ is the sum of $r_*$ nonnegative terms (cf.\
\eqref{eq:SlopeFcn}) we conclude $   \tfrac{c^{\alpha^r}}{c_*^{\alpha^r}} =
    \tfrac{c^{\beta^r}}{c_*^{\beta^r}}$ as desired. 

\emph{Step 2.} If $c\in \pl\bfC$ satisfies $\bm R(c)=0$, then consider $c_\delta=
c{+}\delta c_*\in \bfC_+$ for $\delta\in
{]0,1[}$. With $|\bm R(c_\delta)|\leq C_0\delta $, 
$\cR(c_\delta,v) \geq 0$, and
$|\D\cE(c_\delta)|\leq i_* \log(1/\delta)$ we find 
\[
\calS(c_\delta) = \cR^*(c_\delta,-\D\cE(c_\delta)) = -
\D\cE(c_\delta)\cdot \bm R(c_\delta)-\cR(c_\delta, \bm
R(c_\delta))  \leq i_*  C_0 \delta \log(1/\delta) + 0 \,.
\]
Using the continuity of $\cS$ we obtain $\calS(c)=\lim_{\delta \to
  0^+} \calS(c_\delta)=0$ and conclude as in Step 1. 
 
\emph{Step 3.} The equilibrium condition of Step 1 implies 
$\wt c_*^{\beta^r}/ \wt c_*^{\alpha^r}=c_*^{\beta^r}/ c_*^{\alpha^r}= :\mu_r^2$
for all $r \in R$. Because in the RRE \eqref{eq:RRE.sym} only the terms 
$\delta^r_* /c_*^{\alpha^r} = \big( c_*^{\beta^r}/
c_*^{\alpha^r}\big)^{1/2} = \mu_r$ and $\delta^r_* /c_*^{\beta^r} =
1/\mu_r$ appear, the last statement follows.  
\end{proof}

The next lemma shows that $\cR(c,\cdot)$ forbids velocities $v$ outside of
the stoichiometric subspace $\Gamma$. Moreover, for all trajectories
$c:[0,T]\to  \bfC$  with $\fD(c)<\infty$, which are much more than the
solutions of the RRE \eqref{eq:RRE}, we find that they have to lie in
one stoichiometric subset $\bfC_q$, i.e.\ the conserved quantities are
already encoded in $\fD$.  

Below we use the characteristic function $\chi_A$ of convex analysis,
which is defined via $\chi_A(v)=0$ for $v\in A$ and $\chi_A(v)=\infty$
otherwise. 
 
\begin{lem}[Conserved quantities via $\fD$]
\label{le:ConservQuanti} Let $\Gamma$, $Q$, $\bfC_q$, and
$\mathscr{Q}$ be defined as in Section \ref{su:MassAction}, and let
$\cR^*$ and $\cR$ be defined as in 
\eqref{eq:RRE.GS.b} and \eqref{eq:def.cR}, respectively. 

(a) For all $(c,v)\in \bfC\ti \R^{i_*}$ we have $\calR(c,v) \geq
\chi_\Gamma(v)$. 

(b) If $c\in \rmW^{1,1}([0,T];\bfC)$ satisfies $\fD(c)<\infty$, then 
 $Q\dot{c}=0$ a.e., or equivalently there exists $q\in \mathscr{Q}$ such
 that $c(t)\in \bfC_q$ for all $t\in [0,T]$. 
\end{lem}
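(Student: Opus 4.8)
The plan is to prove part (a) first, since part (b) follows from it by a short argument. For part (a), I would compute the Legendre--Fenchel transform \eqref{eq:def.cR} directly. Writing $\cR^*(c,\xi)$ as a sum over $r$ of terms $\wh\kappa_r\sqrt{c^{\alpha^r}c^{\beta^r}}\,\sfC^*(\gamma^r\cdot\xi)$ with $\gamma^r=\alpha^r-\beta^r$, observe that $\cR^*(c,\cdot)$ depends on $\xi$ only through the values $\gamma^r\cdot\xi$, i.e.\ only through the component of $\xi$ in $\Gamma=\mathrm{span}\{\gamma^r\}$. Hence in the supremum $\sup_\xi\{\xi\cdot v-\cR^*(c,\xi)\}$, decomposing $\xi=\xi_\Gamma+\xi_{\Gamma^\perp}$, the term $\xi_{\Gamma^\perp}\cdot v$ is unconstrained and can be sent to $+\infty$ unless $v\perp\Gamma^\perp$, i.e.\ unless $v\in(\Gamma^\perp)^\perp=\Gamma$. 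This immediately gives $\cR(c,v)=+\infty$ for $v\notin\Gamma$, hence $\cR(c,v)\geq\chi_\Gamma(v)$; and for $v\in\Gamma$ nonnegativity of $\cR(c,\cdot)$ (which holds because $\cR^*(c,0)=0$, so $\cR(c,v)\geq 0\cdot v-\cR^*(c,0)=0$) gives $\cR(c,v)\geq 0=\chi_\Gamma(v)$. This covers all cases.

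For part (b), suppose $c\in\rmW^{1,1}([0,T];\bfC)$ with $\fD(c)<\infty$. Since $\cR^*(c,{-}\D\cE(c))=\calS(c)\geq 0$ (this is the slope function \eqref{eq:SlopeFcn}, continuous and nonnegative on $\bfC$), the finiteness of $\fD(c)=\int_0^T\{\cR(c,\dot c)+\calS(c)\}\dd t$ forces $\int_0^T\cR(c(t),\dot c(t))\dd t<\infty$, and in particular $\cR(c(t),\dot c(t))<\infty$ for a.e.\ $t$. By part (a), $\cR(c(t),\dot c(t))\geq\chi_\Gamma(\dot c(t))$, so $\dot c(t)\in\Gamma$ for a.e.\ $t$, i.e.\ $Q\dot c(t)=0$ for a.e.\ $t$ since $\ker Q=\Gamma$. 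As $c\in\rmW^{1,1}$, the function $t\mapsto Qc(t)$ is absolutely continuous with a.e.\ vanishing derivative, hence constant: $Qc(t)\equiv q$ for some $q\in\R^m$, and $q\in\mathscr{Q}=Q\bfC$ since $c(t)\in\bfC$. Thus $c(t)\in\bfC_q$ for all $t\in[0,T]$, which is the claim.

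I expect no serious obstacle here; the only point requiring a little care is the identification $\cR^*(c,{-}\D\cE(c))=\calS(c)$ when $c$ touches the boundary $\pl\bfC$ (where $\D\cE$ blows up), but the excerpt already records that $\calS$ is continuous on all of $\bfC$ and agrees with $\cR^*(c,{-}\D\cE(c))$ on $\bfC_+$, which is all that is needed since $\calS\geq 0$ everywhere on $\bfC$ regardless. The argument for (a) is essentially a one-line observation about the dependence of $\cR^*$ on $\xi$ through $\Gamma$ only, and the passage from ``$\dot c\in\Gamma$ a.e.'' to ``$Qc$ constant'' is the standard fact that a $\rmW^{1,1}$ function with a.e.-zero derivative is constant.
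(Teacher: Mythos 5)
Your proposal is correct and follows essentially the same route as the paper: for (a) you exploit that $\cR^*(c,\cdot)$ vanishes on (is independent of) $\Gamma^\perp$ and restrict the supremum in the Legendre transform to $\xi\in\Gamma^\perp$, which yields $\chi_\Gamma$; for (b) you use nonnegativity of the slope term to get $\int_0^T\cR(c,\dot c)\,\mathrm{d}t<\infty$, hence $\dot c\in\Gamma=\ker Q$ a.e., and absolute continuity of $t\mapsto Qc(t)$ to conclude it is constant. No gaps; your remark about the boundary of $\bfC$ is handled exactly as in the paper via the continuous slope function $\calS$.
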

\begin{proof} Using $\gamma^r = \alpha^r-\beta^r$ we find 
$\cR^{*}(c,\xi)=0$ for $\xi\perp\Gamma=\mathrm{ker}(Q)$ and conclude 
\[
\cR(c,v)=\sup_{\xi}(\xi\cdot v-\cR^{*}(c,\xi))\geq\sup_{\xi\perp\Gamma} 
             ( \xi\cdot v-\cR^{*}(c,\xi))
 =\sup_{\xi\perp\Gamma}(\xi\cdot v)=\chi_{\Gamma}(v).
\]
This proves part (a). 

The bound $\fD(c)<\infty$ implies that $\int_{0}^{T}\cR(c,\dot{c})\dd
t<\infty$ and hence $\dot{c}\in\Gamma=\mathrm{ker}(Q)$ a.e. This
proves $Q\dot{c}(t)=0$ a.e.\ and by the absolute continuity of
$c$, the function $t\mapsto Qc(t)$ must be constant. Hence part (b) is
established as well. 
\end{proof}

\begin{rem}[Different gradient structures]\label{re:SeveralGS}
We emphasize that the symmetric RRE \eqref{eq:RRE.sym}, which was
obtained from the DBC, indeed has many other gradient structures with
the same relative entropy $\cE$ given in \eqref{eq:RRE.GS.a}. 
Choosing arbitrary smooth and strictly convex functions
$\Phi_r:\R\to {[0,\infty[}$
with $\Phi_r(0)=0$ and $\Phi_r({-}\zeta)=\Phi_r(\zeta)$ we may define 
\[
\cR^*_\Phi(c,\xi) = \sum_{r=1}^{r_*} \wh\kappa_r\delta_r^*
\Lambda_r\Big(\frac{c^{\alpha^r}}{c_*^{\alpha^r}} ,
                \frac{c^{\beta^r}}{c_*^{\beta^r}} \Big) \,
\Phi_r\big(\,(\alpha^{r}{-}\beta^{r})\cdot\xi\,\big) \quad
\text{with }  \Lambda_r( a,b ) = \frac{a\ - \ b}{\Phi'_r\big( \log a{-}\log b\big)}
\]
and $\delta_r^*=\big(c_*^{\alpha^r} c_*^{\beta^r}\big)^{1/2}$. 
Note that $ \Lambda_r$ can be smoothly extended by
$\Lambda_r(a,a)=a/\Phi''_r(0)$. 

To show that the gradient system $(\bfC,\cE,\cR_\Phi)$ indeed
generates \eqref{eq:RRE.sym} as the associated gradient-flow equation,
it suffices to consider the $r^\text{th}$ reaction pair, 
because the dual potential $\cR^*_\Phi$ is
additive in the reaction pairs. Inserting $\D\cE(c)= \big(
\log(c_i/c^*_i)\big)_{i=1,..,i_*}$ 
we obtain the relation 
\begin{align*}
&\D_\xi\cR_{\Phi_r}^*\big(c,{-}\D\cE(c)\big) = \wh\kappa_r \delta_r^* \Lambda_r
\Big(\frac{c^{\alpha^r}}{c_*^{\alpha^r}} , 
      \frac{c^{\beta^r}}{c_*^{\beta^r}} \Big)   \:
\Phi'_r\big(\,(\alpha^{r}{-}\beta^{r})\cdot({-}\D\cE(c))\,\big)
(\alpha^{r}{-}\beta^{r}) 
\\
& \overset{\text{\eqref{eq:Logarithm}}}= -\wh\kappa_r \delta_r^* \Lambda_r
\Big(\frac{c^{\alpha^r}}{c_*^{\alpha^r}} , 
      \frac{c^{\beta^r}}{c_*^{\beta^r}} \Big)   \:
\Phi'_r\Big(\,\log\big(\frac{c^{\alpha^r}}{c_*^{\alpha^r}}\big) - 
   \log\big(\frac{c^{\beta^r}}{c_*^{\beta^r}} \big) \,\Big)
(\alpha^{r}{-}\beta^{r}) 
=  - \wh\kappa_r \delta_r^* \Big(\frac{c^{\alpha^r}}{c_*^{\alpha^r}}  -  
      \frac{c^{\beta^r}}{c_*^{\beta^r}} \Big) \:(\alpha^{r}{-}\beta^{r}) ,
\end{align*} 
which is the desired result. 

The choice $\Phi_r(\zeta)=\zeta^2/2$ was used in \cite{Miel11GSRD},
while here we use $\Phi_r=\mathsf C^*$ leading to 
\[
\Lambda_r(a,b)=(ab)^{1/2} \quad \text{and} \quad 
 \delta_r^* \,\Lambda_r \Big(\frac{c^{\alpha^r}}{c_*^{\alpha^r}} , 
      \frac{c^{\beta^r}}{c_*^{\beta^r}} \Big)  = \big( c^{\alpha^r}
      c^{\beta^r}\big)^{1/2}.
\]
This is the desired term in \eqref{eq:RRE.GS.b} that is independent of
$c_*$, while for other choice of $\Phi_r$ the last term will depend on
$c_*$ (see \cite{MieSte19?CGED}).
\end{rem}

\subsection{Fast-slow reaction-rate equation}
\label{su:FastSlowRRE}

We assume that some reactions are fast with reaction coefficients
$\wh\kappa_r^\eps = \kappa_r/\eps$,  
while the others are slow with reaction coefficients $\wh\kappa_r^\eps
= \kappa_r$ (of order 1). Here we assume that the set or reaction
indices $R=\{1,...,r_*\}$ decomposes into  
$R_\slow\,\dot{\cup}\,R_{\fast}$. For simplicity we assume
that the detailed-balance steady state $c_*$ is independent of $\eps$,
but a soft dependence with a limit $c^\eps_*\to c_*\in \bfC_+$ could
be allowed as well. 
\begin{equation}
  \label{eq:RRE.fs}
  \begin{aligned}
&\dot{c}  = \bm{R}_\eps(c)=-\sum_{r=1}^{r_{*}} \wh\kappa^\eps_r
\delta_r^* \Big(\frac{c^{\alpha^{r}}}{c_*^{\alpha^{r}}} -
\frac{c^{\beta^{r}}}{c_*^{\beta^{r}}} \Big)\big(\alpha^{r}-\beta^{r}\big)
 =\bm{R}_\slow(c)+\frac{1}{\eps}\bm{R}_\fast(c) \\
& \text{with }\bm{R}_{\mathrm{xy}}(c)=\sum_{r\in R_{\mathrm{xy}}} \kappa_r
\delta_r^* \Big(\frac{c^{\alpha^{r}}}{c_*^{\alpha^{r}}}
\frac{c^{\beta^{r}}}{c_*^{\beta^{r}}}
\Big)\big(\alpha^{r}-\beta^{r}\big) \quad \text{for } \ \mathrm{xy}
\in \{ \slow, \fast\}. 
\end{aligned}
\end{equation}

Obviously, for each $\eps>0$ we have a cosh-type gradient
structure $(\bfC,\cE,\calR_\eps)$ with 
\begin{equation}
  \label{eq:cR*.eps}
  \cR^*_\eps(c,\xi) = \cR^*_\slow(c,\xi) + \frac1\eps
  \cR^*_\fast(c,\xi) \quad \text{with }
  \cR^*_{\mathrm{xy}}(c,\xi) = \!\!\sum_{r\in R_{\mathrm{xy}}} \!\!
  \kappa_r \sqrt{c^{\alpha^r}c^{\beta^r}} \: \sfC^*\big(
  (\alpha^r{-}\beta^r)\cdot \xi\big).  
\end{equation}
The aim of this paper is to investigate the behavior of the gradient
structures $(\bfC,\cE,\cR_\eps)$ in the limit $\eps\to0^+$. In
particular, we study the $\Gamma$-limit of the induced
dissipation functionals $\fD_\eps$ obtained as in \eqref{eq:fD.cosh}
but with the duality pair $\cR_\eps{+}\cR^*_\eps$. 
 
At this stage we report on well-known results (see e.g.\
\cite{Both03ILRC, DiLiZi18EGCG}) about the limit evolution for $\eps
\to 0^+$. For small times of order $\eps$ the fast system $\bm{R}_\fast$
will dominate,
while for $t\in [\sqrt\eps, T]$ a slow dynamics takes place where the
  slow reactions drive the evolution and the fast reactions remain in
  equilibrium. 

To be more precise we introduce the fast time scale $\tau=t/\eps$ such
that in terms of $\tau$ we obtain the rescaled system $c'(\tau) = \eps
\bm{R}_\slow(c(\tau)) + \bm{R}_\fast(c(\tau))$. For $\eps \to 0^+$ we
obtain the fast system 
\begin{equation}
  \label{eq:RRE.fast}
  c'(\tau) = \bm{R}_\fast(c(\tau)), \quad c(0)=c_0.
\end{equation}
This is again a RRE satisfying the detailed-balance condition and all
constructions introduced in Sections \ref{su:MassAction} and
\ref{su:DBCiGS}. In particular we obtain  the fast stoichiometric subspace 
\[
\Gamma_\fast:=\mathOP{span}\bigset{ \gamma^{r} \in \Z^{i_*}}{ r\in R_\fast}
\ \subset \ \Gamma \subset \R^{i_*}. 
\]
For the annihilator $ \Gamma_\fast^\perp:= \bigset{q\in \R^{i_*} }{
  \forall\,\gamma\in \Gamma_\fast:\ q\cdot \gamma =0 }$ we have
$\Gamma^\perp \subset \Gamma_\fast^\perp$ and $m_\fast:=\dim
\Gamma_\fast^\perp \geq m= \mathOP{dim}\Gamma^\perp$. Thus, we can
extend the basis $\{q_1,\ldots,q_m\}$ for $\Gamma^\perp$ to a basis
$\{q_1,...,q_m,...,q_{m_\fast}\}$ for $\Gamma_\fast^\perp$ and
define the conservation operator $Q_\fast: \R^{i_*} \to \R^{m_\fast}$ via
\[
Q_\fast^\top := \big( q_1,\ldots,q_{m_\fast}\big):  \R^{m_\fast} \to
\R^{i_*} \quad \text{and set } \Qspace :=
\bigset{ Q_\fast c\in\R^{m_\fast} }{ c\in \bfC }. 
\]
In particular, the important defining relations of $Q_\fast $ are 
\begin{equation}
  \label{eq:Def.Qfast}
  \mathOP{ker} Q_\fast = \Gamma_\fast \quad \text{and} \quad \mathOP{im}
  Q_\fast^\top = \Gamma_\fast^\perp.
\end{equation}
Of course, our interest lies in the case
$0\leq m \lneqq m_\fast \lneqq i_*$. In that case the mapping
$c\mapsto Qc$ yields fewer conserved quantities for the full fast-slow
RRE \eqref{eq:RRE.fs} than the mapping $c\mapsto \sfq = Q_\fast c$ supplies
for the fast RRE \eqref{eq:RRE.fast}. We call $\sfq \in \Qspace$ the slow
variables, as they may still vary on the slow time scale. In particular, the
decomposition of $\bfC$ into fast stoichiometric subsets
\begin{equation}
  \label{eq:Q.q.fast}
   \bfC = {\text{\larger[1]$\cup$}}_{
    \sfq\in \Qspace }  \bfC _{\sfq}^\fast
    \quad\text{where}\quad
    \bfC _{\sfq}^\fast :=\bigset{ c\in \bfC  }{ Q_\fast c=
    \sfq } 
\end{equation}
is finer than $\bfC = \bigcup_{q\in \mathscr{Q}}\bfC_q$. 

\newcommand{\POS}{\!\;\rule{0pt}{0.7em}}

Starting
from a general initial condition $c_0$, one can show that the
solutions $c^\eps:[0,T] \to \bfC$ of the fast-slow RRE
\eqref{eq:RRE.fs} 
have a limit $c^0:[0,T] \to \bfC$, but this limit may not be continuous at
$t=0$. On the short time scale $\tau=t/\eps$ we may define 
$\wt c\POS^\eps(\tau)= c^\eps(\eps\tau)$ which has a limit $\wt c\POS^0 : 
{[0,\infty[} \to \bfC$ satisfying the fast RRE
\eqref{eq:RRE.fast} and having a limit  $\ol c_0:=
\lim_{\tau\to \infty} \wt c\POS^0(\tau)$ with $\bm{R}_\fast(\ol
c_0)=0$.   Hence, we define the set of fast equilibria
\begin{equation}
  \label{eq:def.scrEfast}
  \scrEfa :=\bigset{ c\in \bfC  }{\bm{R}_\fast(c)=0 } 
 =\bigset{ c\in \bfC }{ \forall\, r\in R_\fast:\;
   \tfrac{c^{\alpha^{r}}}{c_*^{\alpha^{r}}}
 = \tfrac{c^{\beta^{r}}}{c_*^{\beta^{r}}} }
\end{equation}
such that for $\tau\in {[0,\infty[}$ the solution $\wt c\POS^0(\tau)$
describes the approach to the slow manifold and
$\ol c_0 \in \scrEfa $. On the time scale of order $1$, the
limits $c^0(t)$ of the solutions $c^\eps(t)$ satisfy
$c^0(t)\in \scrEfa $ for all $t\in {]0,T]}$, and one has
the matching condition $\ol c_0=\lim_{t\to 0^+} c^0(t)$.

The evolution of the solutions $c^0$ within $\scrEfa$ is driven by the slow
reactions only; the fast reactions keep the solution on the fast-equilibrium manifold
$\scrEfa$. In particular, it can be shown (see \cite{Both03ILRC, DiLiZi18EGCG}
or \cite{MieSte19?CGED} for the linear case) that $c^0$ satisfies the limiting
equation
\begin{equation}
  \label{eq:Limit.RRE}
  \dot{c}(t)=\bm{R}_\slow(c(t))+\lambda(t) \quad \text{with
  } c(t) \in \scrEfa \ \text{ and } \ 
 \lambda(t) \in\Gamma_\fast, \quad c(0)=\ol c_0. 
\end{equation}
The result of our paper is quite different: We will pass to the limit
in the gradient systems $(\bfC,\cE,\cR_\eps)$ directly and obtain an
effective gradient system $(\bfC,\cE,\cR_\eff)$, see Theorem
\ref{thm:tiltEDPcvg}. As a consistency check, we will show in
Section \ref{se:EffGS} that the gradient-flow equation for
$(\bfC,\cE,\cR_\eff)$ is indeed identical to the limiting equation 
\eqref{eq:Limit.RRE}, see Proposition \ref{pr:LimEqnConstra}.

\subsection{A simple example for a fast-slow system}
\label{su:Example}

As a guiding example, we consider a reaction system consisting of
three species $X_i$, $i=1,2,3=i_*$, which interact via
$r_*=2$ reactions,  one being slow and one being fast: 
\begin{align*}
\text{slow: } \ X_1  \rightleftharpoons X_3\qquad 
\text{fast: } \ X_{1}+X_{2}  \rightleftharpoons 2X_{3}\,.
\end{align*}
Hence, the stoichiometric vector are given by 
\begin{align*}
\alpha^{1} & =(1,0,0)^\top,\quad \beta^{1}=(0,0,1)^\top, \quad
\gamma^{1}=(1,0,-1)^\top,
\\
\alpha^{2} & =(1,1,0)^\top,\quad \beta^{2}=(0,0,2)^\top, 
 \quad \gamma^{2}=(1,1,-2)^\top\,.
\end{align*}
Hence, one can easily check that the space of conserved quantities
is $\mathrm{span}\left((1,1,1)^\top\right)\in\R^{3}$ which
defines the matrix $Q=(1,1,1)\in\R^{1\times3}$. 

We have $R=R_\fast\cup R_\slow = \{1\} \cup \{2\}$ and the RRE reads 
\begin{align}
\label{eq:RRE.exa}
\dot c &=\bm{R}_{\eps}(c)= 
  (c_3{-}3c_1) \begin{pmatrix}1\\[-0.1em] 0\\[-0.1em]-1\end{pmatrix} 
  + \frac1\eps (c_3^2{-}c_1c_2) 
 \begin{pmatrix}1\\[-0.1em] 1\\[-0.1em] -2\end{pmatrix}.
\end{align}
The nontrivial equilibria of this RRE are given by
$c_*=(c_1^*,c_2^*,c_3^*)^\top=  \sigma (1,9,3)^\top$ for
$\sigma> 0$. All these $c_*$ satisfy the detailed balance condition, and
\eqref{eq:RRE.exa} takes the symmetric form
\eqref{eq:RRE.sym}, viz.\ 
\begin{align*}
&\dot{c}=
   -\kappa_1 \delta_1^*\Big(\frac{c_1}{c_1^*} -\frac{c_3}{c_3^*}\Big) 
    \begin{pmatrix}1\\[-0.1em] 0\\[-0.1em]-1\end{pmatrix} 
   - \frac{\kappa_2}\eps\,\delta_2^* 
    \Big(\frac{c_1c_2}{c_1^*c_2^*} - \frac{c_3^2}{(c_3^*)^2}\Big) 
  \begin{pmatrix}1\\[-0.1em] 1\\[-0.1em] -2\end{pmatrix} 
\\ 
&\text{with }\delta_1^*=(c_1^*c_3^*)^{1/2}=\sigma\sqrt3,\quad
\delta_2^*=(c_1^*c_2^*)^{1/2}c_3 =9\sigma^2, \quad \kappa_1=\sqrt3,
\quad \text{and } \kappa_2=1.
\end{align*}
Thus, we find the cosh-type gradient structure $(\bfC,\cE,\cR^*_\eps)$ of
Section \ref{su:DBCiGS} with
\begin{align*}
&\cE(c)= \sigma \LB (c_1/\sigma) + 9\sigma
 \LB (c_2/(9\sigma)) + 3\sigma  \LB (c_3/(3\sigma)),
\\
&\cR_{\eps}^{*}(c,\xi)=\sqrt{3c_{1}c_{3}}\: \sfC^{*}(\xi_1{-}\xi_3)
+ \frac{1}{\eps}\sqrt{c_{1}c_{2}c_{3}^{2}}\: 
  \sfC^{*}(\xi_1{+}\xi_2{-}2\xi_3)\, .
\end{align*} 
As noted just after Proposition~\ref{pr:GS.cosh},
$\cR^*_\eps$ is independent of $c_*$.

The associated fast system consists simply of one reaction, hence we
find 
\[
\Gamma_\fast = \mathOP{span}(1,1,-2)^\top, \quad 
Q_\fast = \bma{ccc}1&1&1\\ 1&-1&0 \ema,  \quad 
\Qspace = \bigset{\sfq\in \R^2} {\sfq_1\geq 0 }.
\]
The stoichiometric sets $\bfC_q$ with $Qc=q\in \R^1$ are triangles,
which decompose into the straight segments $\bfC^\fast_\sfq$ given by
$Q_\fast c= \sfq$, whereas the set of fast equilibria
\[
\scrEfa =\bigset{
  c\in \bfC  }{ c_{1}c_{2}= c_{3}^{2}}\,.
\]
is curved. See  Figure \ref{fig:FastSlowDecomp} for an illustration. 
\begin{figure} 
\centerline{
\begin{minipage}[b]{0.5\textwidth}
\centerline{
  \includegraphics[width=0.95\textwidth, trim = 0 135 420 200, clip=true]
     {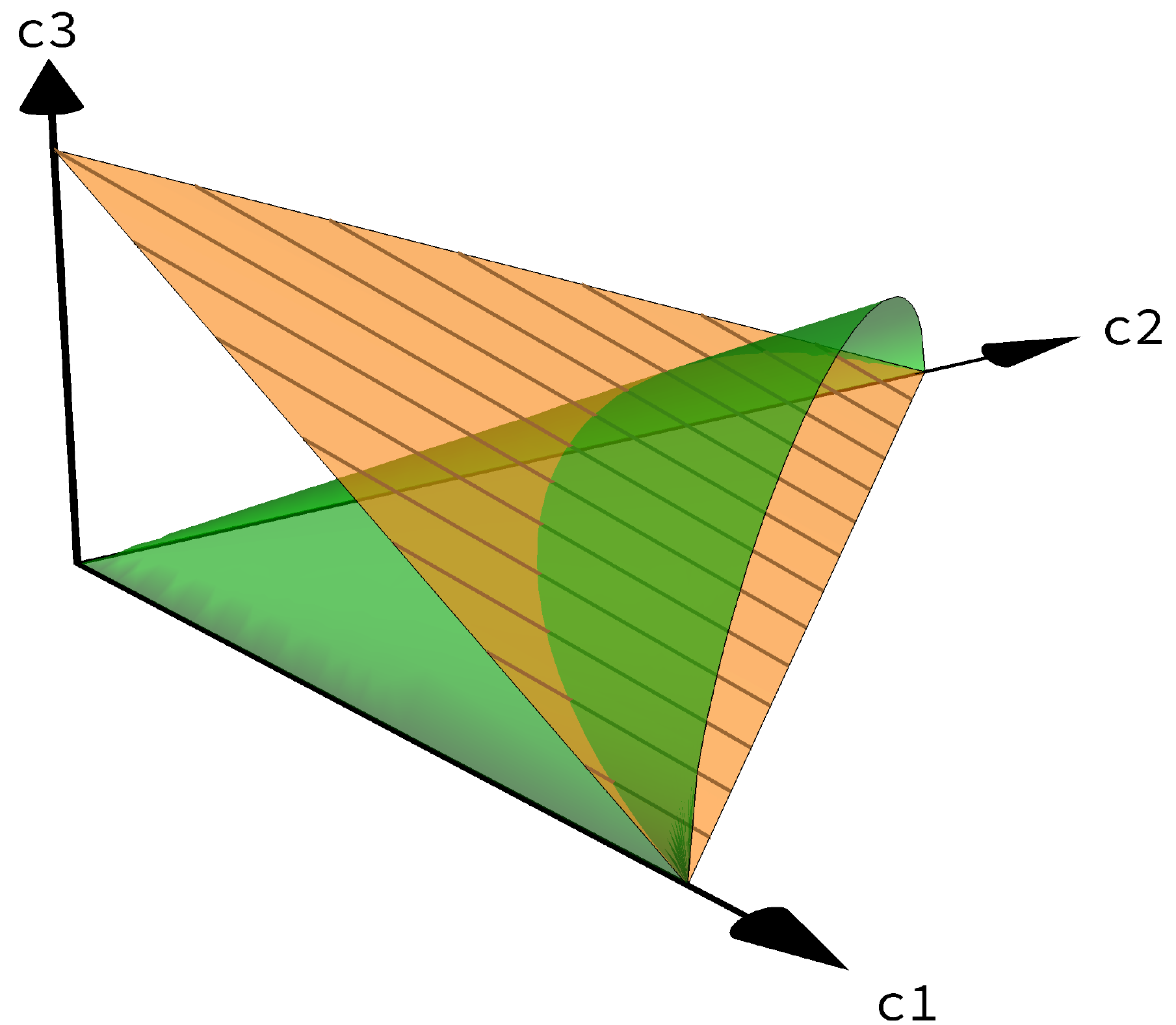}}
\end{minipage}
\begin{minipage}[b]{0.4\textwidth}
\caption{The state space $\bfC={[0,\infty[}^3$ decomposes into the
  triangles $Qc=c_1+c_2+c_3=q$ (light brown), which decompose into the straight
  segments $Q_\fast c= \sfq$ (brown). Each segment has exactly one
  intersection with the fast equilibria $\scrEfa$ (green).}
\end{minipage}}
\label{fig:FastSlowDecomp}
\end{figure}

Finally, we discuss the evolution for our example starting with the
initial condition $c_0=(10,4,0)^\top$ such that
$Qc^\eps(t)=c_1^\eps(t)+c_2^\eps(t)+c_3^\eps(t)=14$ is the conserved
quantity. 
Since there is only one fast reaction, the second conserved
quantity $c_1-c_2=\sfq_2=6$ shows that $\wt c^\eps(\tau)=c^\eps(\eps
t)$ converges to $\wt c(\tau)$ and $\wt c(\tau) \to \ol c_0=(8,2,4)^\top \in
\scrEfa $ for $\tau \to \infty$.

Thus, the limit solution $c^0$ satisfies the limiting equation
\eqref{eq:Limit.RRE}, which reads in our case
\[
\dot c = (c_3{-}3c_1) \begin{pmatrix}1\\[-0.1em]
  0\\[-0.1em]-1\end{pmatrix}  + \lambda_0 \begin{pmatrix}1\\[-0.1em]
  1\\[-0.1em]-2\end{pmatrix}, \quad c_1(t)c_2(t)=c_3(t)^2, \quad
c(0)=\ol c_0=(8,2,4)^\top.
\]  
By eliminating the Lagrange multiplier $\lambda_0\in \R$ and using
the conserved quantity $Qc=14$ this system is equivalent to the system
\[
\dot c_1-\dot c_2 = c_3-3c_1, \quad c_1c_2 = c_3^2, \quad
c_1+c_2+c_3 = 14.
\]

Simulations are shown in Figure \ref{fig:Simul}, which show the fast
convergence to $\scrEfa$ and then the slow convergence  to
the final steady state $c_\mathrm{eq}= (1,9,4)^\top$.
\begin{figure}
\begin{minipage}{0.43\textwidth}
\includegraphics[height=0.5\textwidth]{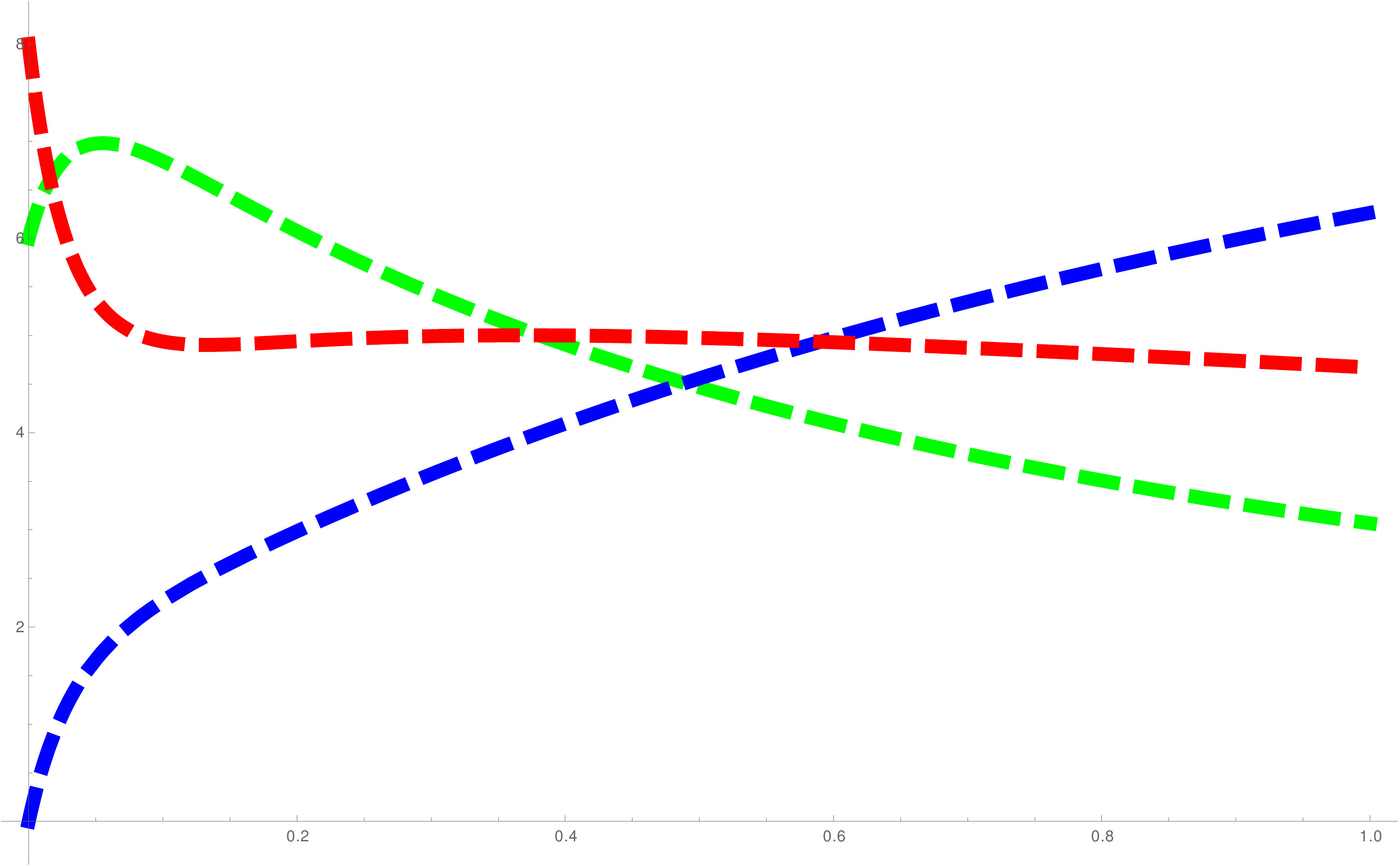}
\!\!\!\!\!\raisebox{5em}{$\ba{c}
c_2(t)\\[0.5em] c_3(t)\\[0.5em] c_1(t)\ea$}
\\
\includegraphics[height=0.5\textwidth]{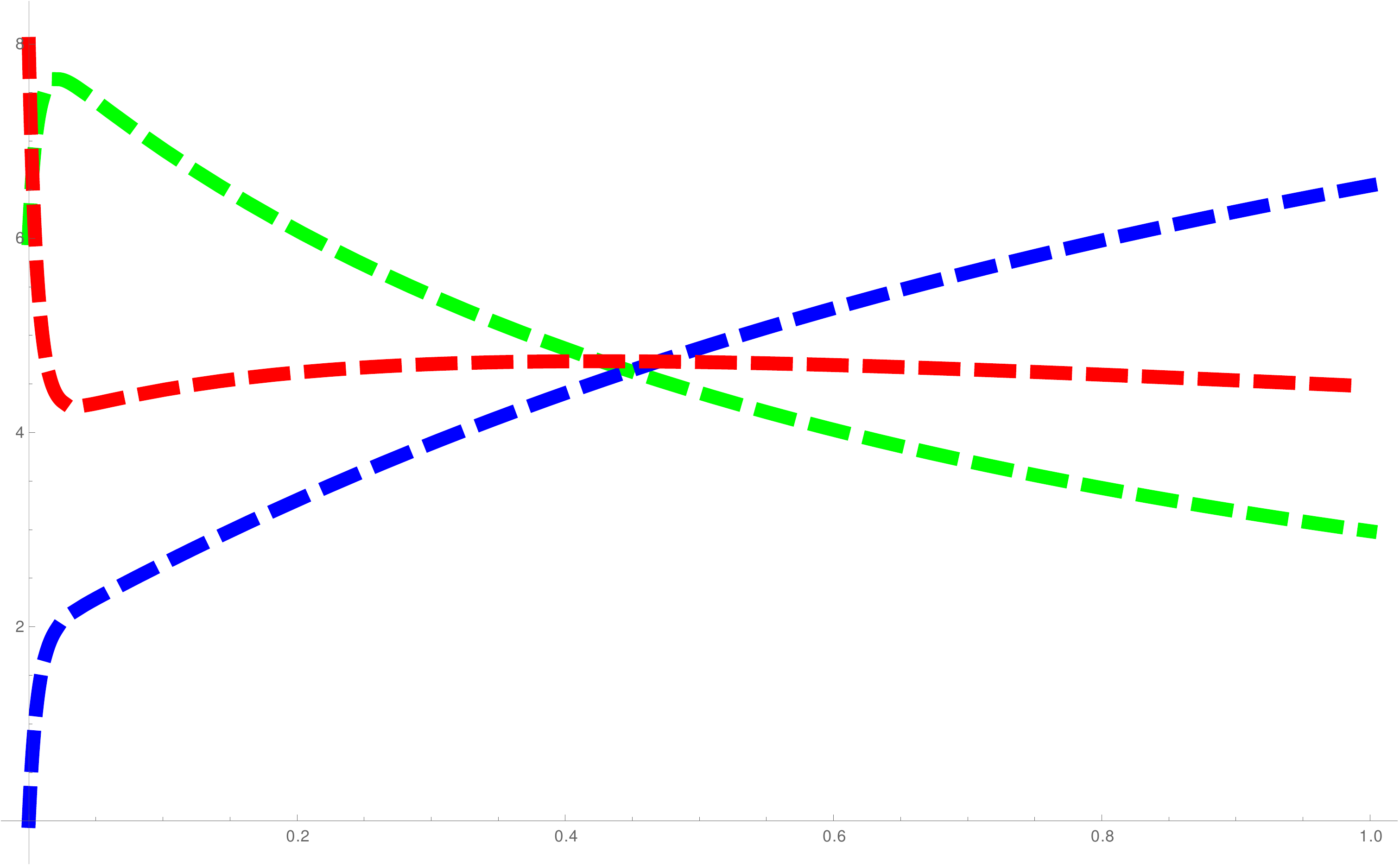}
\!\!\!\!\!\raisebox{5em}{$\ba{c}
c_2(t)\\[0.5em] c_3(t)\\[0.5em] c_1(t)\ea$}
\end{minipage}
\begin{minipage}{0.55\textwidth}
\includegraphics
  [width=\textwidth, trim= 280 20 0 70,
  clip=true]{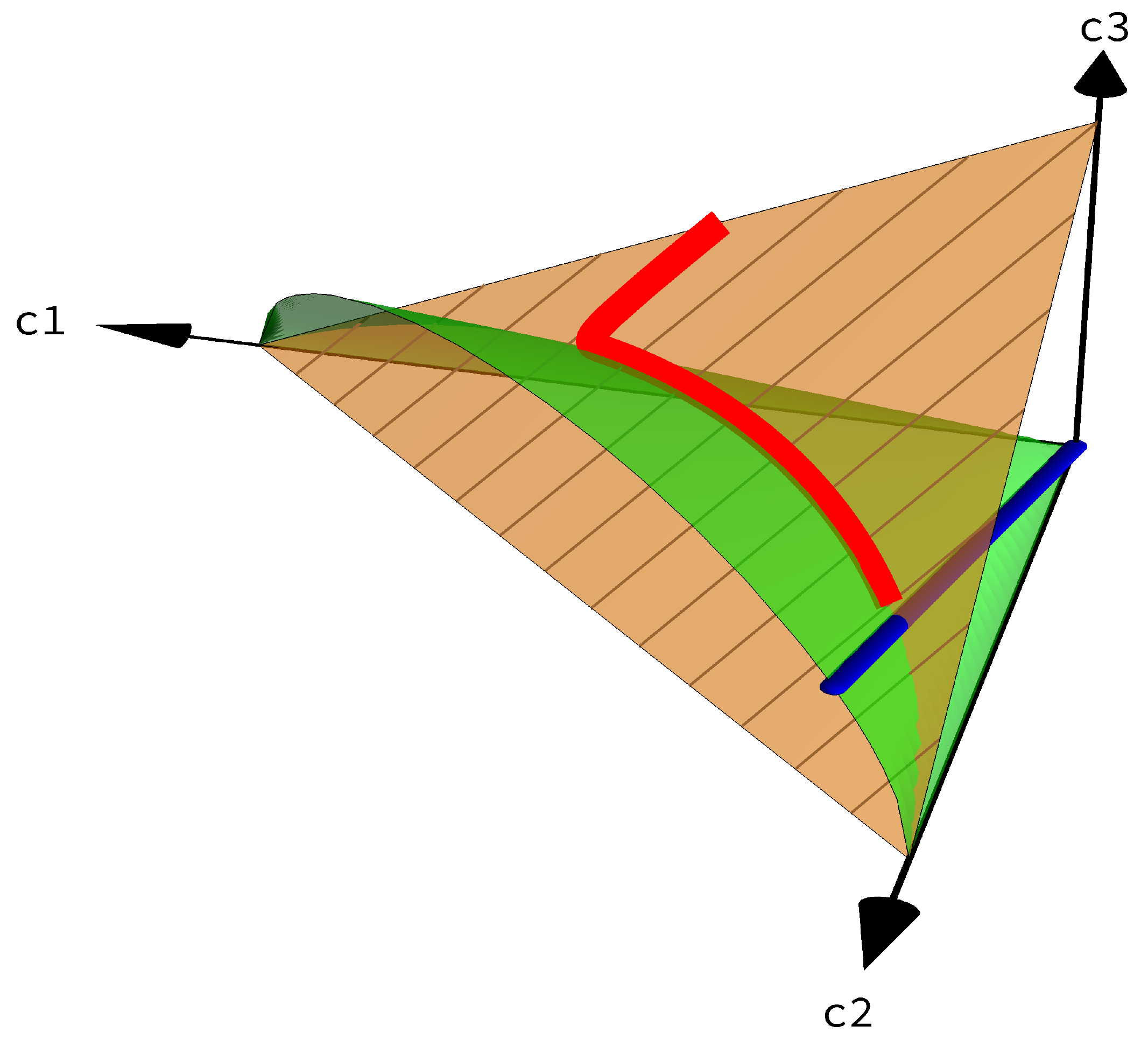}
\end{minipage}
\centerline{\begin{minipage}{0.9\textwidth}
\caption{ Numerical calculation of the solutions $c^\eps(t)$ 
for the RRE \eqref{eq:RRE.exa} with $c^\eps(0)=(4,0,10)^\top$ with
$\eps =1$ (upper left) and
    $\eps=0.2$ (lower left). The lower left figure shows the fast
    convergence to $\ol c_0=(8,2,4)^\top$. The right graphs displays the curve $t
\mapsto c^\eps(t)$ (red), which lies in the plane $Qc=14$ (light brown). It
quickly approaches $\mathscr{M}_\slow$ (green) and then moves towards
the set of steady states (blue).}
\end{minipage}}
\label{fig:Simul}
\end{figure}

\section{EDP-convergence and effective gradient structure}
\label{se:EDPcvg.EffGS}

In this section we first provide the precise definitions of
EDP-convergence for gradient systems. Next we present the our main
result concerning the EDP-limit of the cosh-type gradient structure
for the fast-slow RRE with detailed-balance condition as introduced in
Section \ref{su:FastSlowRRE}, where the proofs are postponed to later
sections. Finally, in Section \ref{se:EffGS} we discuss the
obtained effective gradient system $(\bfC,\cE,\cR_\eff)$ and show that
the induced gradient-flow equation indeed is the same as the limiting
equation \eqref{eq:Limit.RRE}.

\subsection{Definition of different types of EDP-convergence}
\label{su:Def.EDPcvg}

The definition of EDP-convergence for gradient systems relies on
the notion of $\Gamma$-convergence for functionals (cf.\
\cite{Dalm93IGC}).  If $Y$ is a Banach
space and $I_\eps:Y\to \R_\infty$ we write $I_\eps \Gammlim I_0$ for
$\Gamma$-convergence in the strong topology, which is defined via the 
liminf and limsup estimates: 
\begin{align*}
\Gamma\text{-liminf:}\quad& \quad w_\eps \to w_0 \quad \Longrightarrow \quad
\liminf\nolimits_{\eps\to 0^+} I_\eps(w_\eps) \geq I_0(w_0) ,
\\ 
\Gamma\text{-limsup:}\quad& \forall\, \wh w_0 \in Y\ \exists (\wh w_\eps)_\eps:
\quad \big(\  \wh w_\eps \to \wh w_0 \text{ and } 
\limsup\nolimits_{\eps\to 0^+} I_\eps(\wh w_\eps) \leq I_0(\wh w_0)\ \big).
\end{align*}
If in both conditions the strong convergence $\to$ is replaced by weak
convergence $\weak$, then we have (sequential) weak $\Gamma$-convergence and
write $I_\eps \Gammlim I_0$. If weak and strong $\Gamma$-convergence holds,
this is called Mosco convergence and written as $I_\eps \Moscolim I_0$.

For families of gradient systems $(X,\cE_\eps,\cR_\eps)$, three
different levels of EDP-convergence are introduced and discussed in
\cite{DoFrMi19GSWE, MiMoPe18?EFED}, called simple EDP-convergence,
EDP-convergence with tilting, and contact EDP-convergence with
tilting. Here we will only use the first two notions. For all three
notions the choice of weak or strong topology is still to be decided
according to the specific problem. Here in the state space
$X=\R^{i_*}$ this question is irrelevant, but it is relevant for
curves $u:[0,T] \to  \sfX  $ lying in $Y= \rmL^1([0,T];X)$, where
the state space $ \sfX $ is a closed convex subset with non-empty
interior of the Banach space $X$. For our paper, the strong topology
will be sufficient. 

\begin{defn}[Simple EDP-convergence]
  \label{def:EDPcvg} A family of gradient structures
  $( \sfX ,\cE_{\eps},\cR_{\eps})$ is said to \emph{EDP-converge}
  to the gradient system 
  $( \sfX ,\cE_{\mathrm{0}},\cR_{\mathrm{eff}})$ if the
  following conditions hold:
\begin{enumerate}
\itemsep0.1em
\item $\cE_{\eps}\Gammlim \cE_{0}$ on $ \sfX \subset X$;
\item $\fD_{\eps}$ strongly $\Gamma$-converges to $\fD_{0}$
  on $\L^{1}([0,T]; \sfX )$ conditioned to bounded energies 
 (we write $\fD_{\eps}\GamLimE\fD_{0}$), i.e.\ we have\vspace*{-0.5em}%
{\setlength{\leftmarginii}{1.5em}%
\begin{enumerate}
\itemsep-0.01em 
\setlength{\leftmargin}{4.6em} 
\item (Liminf) For all strongly converging families
  $u_{\eps} \to  u$ in $\L^{1}([0,T]; \sfX )$ 
which satisfy  $\sup_{\eps>0}\mathrm{ess\,sup}_{t\in[0,T]}\cE_{\eps}(u_{\eps}(t))<\infty$, 
we have $\liminf_{\eps\to0^+}\fD_{\eps}(u_{\eps})\geq\fD_{0}(u)$.
\item (Limsup) For all $\wt u\in\L^{1}([0,T]; \sfX )$ there exists a 
 strongly converging family
$\wt u_{\eps} \to  \wt u$ in $\L^{1}([0,T]; \sfX )$ with 
 $\sup_{\eps>0}\mathrm{ess\,sup}_{t\in[0,T]}\cE_{\eps}(\wt u_{\eps}(t))<\infty$
and $\limsup_{\eps \to  0^+}\fD_{\eps}( \wt
u_{\eps}) \leq \fD_{0}( \wt u)$;
\end{enumerate}
}
\item there is an effective dissipation potential
  $\cR_{\mathrm{eff}}:  \sfX \times X \to \R_{\infty}$ such that
  $\fD_0$ takes the form of a dual sum, namely
  $\fD_{0}(u)=\int_{0}^{T} \{\cR_{\mathrm{eff}}(u,\dot{u}) {+}
  \cR_{\mathrm{eff}}^{*}(u,-\D\cE_{\mathrm{eff}}(u))\} \dd t$.
\end{enumerate}
\end{defn}

Similarly, one can also use weak $\Gamma$ or Mosco convergence conditioned to
bounded energy, which we will then write as $\fD_{\eps}\wGamLimE\fD_{0}$ and
$\fD_{\eps}\MoscoLimE\fD_{0}$.  In fact, for our fast-slow reaction systems we
are going to prove $\fD_{\eps}\MoscoLimE\fD_{0}$.

A general feature of EDP-convergence is that under suitable conditions the
gradient-flow equation $\dot u = \pl_\xi\calR^*_\eff(u,{-}\D\cE_0(u))$ of the
effective gradient system $(X,\cE_0,\cR_\eff)$ is indeed the limiting equation
equation for the family $\dot u =\pl_\xi\calR^*_\eps(u,{-}\D\cE_\eps(u))$,
i.e.\ limits $u^0$ of solutions $u^\eps$ of latter equations solve the former
equation, see e.g.\ \cite[Thm.\,11.3]{Brai14LMVE},
\cite[Lem.\,3.4]{MieSte19?CGED}, or \cite[Lem.\,2.8]{MiMoPe18?EFED}. For our
case, because of the degeneracy of the fast variables, such a result
requires an assumption that the initial states $u_\eps(0)$ converge to the slow
manifold as $\eps\to0$. Propositions \ref{pr:LimitEqn.q} and
\ref{pr:LimEqnConstra} describe this in detail.

A strengthening of simple EDP-convergence is the so-called
\emph{EDP-convergence with tilting}. This notion involves the tilted
energy functionals $\cE^\eta_\eps:  \sfX  \ni u \mapsto
\cE_\eps(u) - \langle \eta,u\rangle$, where the tilt $\eta$ (also called
forcing) varies through the whole dual space $X^*$. 

\begin{defn}[{EDP-convergence with tilting 
(cf.\ \cite[Def.\,2.14]{MiMoPe18?EFED})}]\label{def:tiltEDP}
A family of gradient structures $( \sfX ,\cE_{\eps},\cR_{\eps})$
is said to \emph{EDP-converge with tilting} to the gradient system 
$( \sfX ,\cE_0,\cR_\eff)$, if for all tilts $\eta\in X^{*}$ we have
 $( \sfX ,\cE^\eta_{\eps},\cR_{\eps})$ EDP-converges to
 $( \sfX ,\cE^\eta_{\eps},\cR_\eff)$. 
\end{defn}

In \cite[Sec.\,2.4]{MiMoPe18?EFED} the admissible tilts are chosen to be
general $\rmC^1$ functions leading to tilted families
$\calE_\eps^\calF=\calE_\eps + \calF$. This choice was inevitable because
there the underlying space $\bfQ$ was a manifold. In the present paper the
underlying space $\bfC$ is a convex subset of a linear space $\sfX$ which allows
for the simpler definition. 

We observe that $\cE_\eps \Gammlim \cE_0$  implies $\cE_\eps^\eta
\Gammlim \cE_0^\eta$ for all $\eta\in X^*$ (and similarly for weak
$\Gamma$-convergence), since the linear tilt $u\mapsto -\langle \eta,
u\rangle $ is weakly continuous. The main and nontrivial assumption is
that additionally  
\[
\fD_\eps^\eta: u \mapsto \int_0^T\!\!\big\{ 
\cR_\eps(u,\dot u) + \cR^*_\eps(u,\eta{-}\D\cE_\eps(u)) \big\} \dd t
\]
$\Gamma$-converges in $\L^1([0,T]; \sfX )$ to $\fD_0^\eta$ for all
$\eta \in X^*$ and that this limit $\fD_0^\eta$ is given
in dual-sum form with $\cR_\eff$ via 
\[
\fD_{0}^{\eta}(u)=\int_{0}^{T}\!\!\big\{ \cR_{\mathrm{eff}}(u,\dot{u})+ 
\cR_{\mathrm{eff}}^{*}(u,\eta{-}\D\cE_{\mathrm{eff}}(u)) \big\} \dd t.
\]
The main point is that $\cR_\eff$ remains independent of $\eta\in
X^*$. We refer to \cite{MiMoPe18?EFED} for a
discussion of this and the other two notions of EDP-convergence.

\subsection{Our main EDP-convergence result}
\label{su:MainEDPcvg}

Since we have assumed that the stationary measure does not
depend on $\eps>0$, also the energy $\cE_{\eps}=\cE$ is
$\eps$-independent. Since $\cE$ is also convex and lower semicontinuous,
we have the trivial Mosco convergence $\cE_{\eps}\Moscolim \cE$.

To study the $\Gamma$-limit of the dissipation functionals $\fD_\eps$
we first extend them to the space 
\[
\L^1([0,T];\bfC): = \bigset { c\in\L^{1}([0,T];\R^{i_{*}}) }{ 
 c(t)\in \bfC  \text{ a.e.}} \, .
\]
For this  we also use the slope functions (where $\mathrm{xy}\in \{\fast,\slow\}$)
\begin{equation}
  \label{eq:Slope.eps}
  \calS_\eps(c)=\calS_\slow(c)+ \frac1\eps \calS_\fast(c) \quad
\text{with }\calS_\mathrm{xy}(c) = \sum_{r\in R_\mathrm{xy}}
2\kappa_r \delta^*_r \Big( 
 \big(\frac{c^{\alpha^r}}{c_*^{\alpha^r}} \big)^{1/2}
 - \big(\frac{c^{\beta^r}}{c_*^{\beta^r}} \big)^{1/2} \Big)^2. 
\end{equation}
For $\eps>0 $ the dissipation functional
$\fD_{\eps}: \L^1([0,T];\bfC) \to [0,\infty]$ is now given
by 
\begin{equation}
  \label{eq:fD.extended}
  \fD_{\eps}(c)=\begin{cases} 
  \int_{0}^{T} \!\!\big\{ \cR_{\eps}(c,\dot{c})+\calS_{\eps}(c)
  \big\} \dd t&\text{for } c \in \rmW^{1,1}([0,T];\bfC),\\
 \infty &\text{otherwise}. 
\end{cases} 
\end{equation}
We recall that the dual dissipation potentials are given by (with
$\gamma^r=\alpha^r- \beta^r$) 
\begin{align*}
\cR_{\eps}^{*}(c,\xi) &
=\cR_\slow^{*}(c,\xi)+\frac{1}{\eps}\cR_\fast^{*}(c,\xi)
 \quad \text{with }\cR^*_\mathrm{xy}(c,\xi)= \sum_{r \in R_\mathrm{xy}}
\kappa_r \sqrt{c^{\alpha^r}c^{\beta^r}}\: 
  \sfC^*\big(\gamma^{r}\cdot\xi\big)\, .
\end{align*}
Because $\calS_\fast(c)\geq 0$ and $\cR^*_\fast(c,\xi)\geq 0$ we
observe that $\calS_\eps(c)$ and $\cR^*_\eps(c,\xi)$ are
monotonously increasing for $\eps\downarrow 0$. 
Thus, their
$\Gamma$-limits exist and are equal to the pointwise limits, which are denoted by $\cS_0$
and $\cR^*_0$ respectively (this uses 
\cite[Rem.\,5.5]{Dalm93IGC} and the continuity of $\calS_\fast$ and
$\cR^*_\fast$.)

Using \eqref{eq:R=0.S*=0} for the fast system we know that for $c\in
\bfC$ the three conditions $\bm{R}_\fast(c)=0$, $\calS_\fast(c)=0$,
and $c \in \scrEfa$ are equivalent. Hence, we conclude 
\[
  \lim_{\eps\to 0^+}\calS_\eps(c)=:\calS_0(c) = \calS_\slow(c) +
  \chi_{\scrEfa} (c) , \quad 
\text{where }\chi_A(b)= \left\{\ba{cl} 0 &\text{for }b\in A,\\[-0.1em] 
\infty& \text{for }b\not\in A. \ea \right.
\] 
Obviously, we have $\lim_{\eps\to 0^+}\cR^*_\eps(c,\xi)=:\cR^*_0(c,\xi)=0$ for $\xi\in \Gamma_\fast^\perp$
and for $c\in \bfC_+$ we obtain $\cR^*_0(c,\xi)= \infty $
for $\xi \not\in  \Gamma_\fast^\perp$. Thus, we 
define the effective dual dissipation potential as 
\begin{align}
\label{eq:EffDualDissPot}
  \cR_{\eff}^{*}(c,\xi) &
  =\cR_\slow^{*}(c,\xi) + \chi_{\Gamma_\fast^{\perp}}(\xi)\,.
\end{align}
Note that $\cR_{\eff}^{*}(c,\xi) \geq \cR^*_0(c,\xi)$ where inequality
may happen on the boundary of $\bfC$, e.g.\ at $c=0$. Nevertheless, we
have the important relation 
\begin{subequations}
\label{eq:Eff.calS*.cR}
\begin{equation}
  \label{eq:Eff.calS*}
  \forall\, c \in  \bfC_+: \ 
\cR^*_\eff(c,{-}\D\cE(c)) = \calS_0(c):=  \calS_\slow(c) +
  \chi_{\scrEfa} (c) \,. 
\end{equation}
The primal effective dissipation potential $\cR_\eff$ is given by the
Legendre--Fenchel transformation: 
\begin{equation}
  \label{eq:Eff.cR}
  \begin{aligned}
  \cR_{\eff}(c,v) & =\sup_{\xi\in\R^{i_{*}}}\left\{
    v\cdot\xi-\cR_{\eff}^{*}(c,\xi)\right\}
  =\sup_{\xi\in\R^{i_{*}}}\left\{
    v\cdot\xi-\cR_\slow^{*}(c,\xi)-\chi_{\Gamma_\fast^{\perp}}(\xi)\right\}
  \\ 
  & =\inf_{v_{1}+v_{2}=v}\left\{
    \cR_\slow(c,v_{1})+\chi_{\Gamma_\fast}(v_{2})\right\}
  =\inf_{v_{2}\in\Gamma_\fast}\left\{
    \cR_\slow(c,v{-}v_{2})\right\}, 
\end{aligned}
\end{equation}
\end{subequations}
where we have used
$(\chi_{\Gamma_\fast})^*= \chi_{\Gamma_\fast^\perp}$ and the classical
theorem on the Legendre-Fenchel transformation turning a sum into an
infimal convolution (see \cite[Prop.\,3.4]{Atto84VCFO}).

To state our main result we now impose a non-trivial structural 
assumption that is crucial for our result and its proof. An  analogous
condition on the uniqueness of equilibria in each stoichiometric subset
$\bfC^\fast_\sfq$  was  used in \cite[Eqn.\,(17)]{Miel17UEDR}. We believe that
the theory of EDP-convergence can be studied without this assumption,
but then one has to refine the results and the solution technique
suitably, see the counterexample in Remark \ref{re:Counterexa.fD}.

\begin{assumption}[Conditions on the fast equilibria $\scrEfa$]
\label{as:UniqueFastEquil}
For all $\sfq\in\Qspace:=\bigset{Q_\fast c}{c\in \bfC}$, there is exactly one equilibrium
of $c'=\bm{R}_\fast(c)$ in the invariant subset $\bfC^\fast_\sfq$ (cf.\
\eqref{eq:Q.q.fast}), i.e.\ 
\begin{equation}
  \label{eq:UFEC}
\text{(UFEC)} \qquad   \forall\, \sfq \in\Qspace : \quad \#\big( 
\bfC^\fast_\sfq \cap\scrEfa \big) \: =\: 1\, ,
\end{equation}
which is called the \emph{unique fast-equilibrium condition}. 
By $\Psi : \Qspace\to \bfC$ we denote the mapping such that
$\{\Psi(\sfq)\} = \bfC^\fast_\sfq \cap\scrEfa $ for all $\sfq\in
\Qspace$.

We further impose the following positivity assumption on $\Psi$: 
\begin{equation}
  \label{eq:InterEquilCond}
\exists\, \ol \sfq\in \Qspace\   \forall\,\theta\in {]0,1]}\ \forall
\,\sfq\in \Qspace \  
 \forall\, i\in I:\quad \Psi(\sfq {+} \theta \ol\sfq)_i>0 \ 
  \text{ and } \    \Psi(\sfq {+} \theta \ol\sfq)_i\geq \Psi(\sfq)_i.
\end{equation} 
\end{assumption}
The positivity and monotonicity assumption \eqref{eq:InterEquilCond} seems
to be only technical and it is only used at one point, namely in Step 1 in the
proof of Theorem \ref{th:Recovery}. We expect that this assumption can be
avoided by a more careful construction of recovery sequence.

In Section \ref{su:DiscAssumpt} we will show that one of possibly several
equilibria in $ \bfC^\fast_\sfq$ is always given as the minimizer of $\cE$ on
$ \bfC^\fast_\sfq$. Thus, the assumption really means that this
``thermodynamic equilibrium'' is the only steady state. Our main
$\Gamma$-convergence result reads as follows.

\begin{thm}[$\Gamma$-convergence]
\label{thm:GammaCvg}Consider a fast-slow DBRS
$(A,B,c_*,\wh\kappa^\eps)$ as in
\eqref{eq:RRE.fs} together with its cosh-type gradient structure
$(\bfC,\cE,\cR_\eps)$ as in Proposition \ref{pr:GS.cosh} and the
dissipation functional $\fD_\eps$ defined in \eqref{eq:fD.extended}.
Moreover, let Assumption \ref{as:UniqueFastEquil} be satisfied.

Then we have  $\fD_{\eps}\MoscoLimE\fD_{0}$
on $\L^{1}([0,T], \bfC )$ conditioned to bounded energies,  where
$\fD_0:\L^1([0,T];\bfC) \to [0,\infty]$ is defined as
\[
\fD_{0}(c):=\begin{cases} 
 \int_{0}^{T} \!\big\{\cR_{\eff}(c,\dot{c})+\cS_0(c) \big\} 
\dd t&\text{for }c\in \rmC^0([0,T];\bfC) \text{ and }Q_\fast c\in
\rmW^{1,1}([0,T];\R^{m_\fast}),\\
 \infty& \text{otherwise}, \end{cases}
\]
where $\cR_{\eff}$ and $\calS_0$ are defined in \eqref{eq:Eff.calS*.cR}.
\end{thm}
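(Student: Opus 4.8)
The plan is to prove Mosco convergence $\fD_\eps \MoscoLimE \fD_0$ by establishing separately the weak $\Gamma$-liminf inequality (conditioned on bounded energy) and the strong $\Gamma$-limsup inequality (recovery sequences), since the strong liminf is implied by the weak one. Throughout we work in $Y=\L^1([0,T];\bfC)$ and use the monotone convergence $\cR^*_\eps \uparrow \cR^*_0$ and $\calS_\eps \uparrow \calS_0$ noted in the text, together with the structural facts from Lemmas~\ref{le:Equilibria} and~\ref{le:ConservQuanti}, the relation $\calS_0(c)=\cR^*_\eff(c,{-}\D\cE(c))$ on $\bfC_+$ from \eqref{eq:Eff.calS*}, and the characterization of $\scrMsl$ via $\D\cE(c)\in\Gamma_\fast^\perp$ (Lemma~\ref{le:DEGamFast}).

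For the \textbf{liminf estimate}, suppose $c^{\eps_n}\weak \wt c$ in $\L^1$ with uniformly bounded energies and $\liminf \fD_{\eps_n}(c^{\eps_n})=:M<\infty$ (else nothing to prove); pass to a subsequence realizing the liminf. The crucial compactness comes from observations (2) and (3) in the introduction: the bound $\int_0^T \cR_\slow(c^{\eps_n},\dot c^{\eps_n})\d t + \frac1{\eps_n}\int_0^T\cR^*_\fast(c^{\eps_n},{-}\D\cE(c^{\eps_n}))\d t \le M$ plus $\cR_\slow(c,v)\ge\chi_\Gamma(v)$ (Lemma~\ref{le:ConservQuanti}(a)) gives, via the degeneracy structure of $\cR_\eps$ along $\Gamma_\fast$, a uniform $\W^{1,1}$-bound on $Q_\fast c^{\eps_n}$, hence (Arzelà–Ascoli / Helly type argument) a subsequence with $Q_\fast c^{\eps_n}\to\sfq$ in $\rmC^0([0,T];\R^{m_\fast})$ and $\sfq\in\W^{1,1}$; meanwhile the $\frac1{\eps_n}$-term forces $\int_0^T \calS_\fast(c^{\eps_n})\d t\to 0$, so $\calS_\fast(\wt c(t))=0$ a.e., i.e. $\wt c(t)\in\scrEfa$ for a.e. $t$, and by the UFEC (Assumption~\ref{as:UniqueFastEquil}) together with $Q_\fast\wt c(t)=\sfq(t)$ we identify $\wt c(t)=\Psi(\sfq(t))$; this upgrades weak to \emph{strong} $\L^1$-convergence $c^{\eps_n}\to\wt c$ because $c^{\eps_n}$ is pinned near the manifold $\scrMsl$ and its slow coordinate converges uniformly. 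With strong convergence in hand, the slope term passes to the limit by Fatou (continuity of $\calS_\slow$, and $\chi_{\scrEfa}(\wt c)=0$), and the $\cR_\slow$ term is handled by lower semicontinuity of convex integral functionals under strong $\L^1$ convergence of $c^{\eps_n}$ and weak $\L^1$ convergence of $\dot c^{\eps_n}$ (Ioffe-type theorem), after replacing $\cR_{\eps_n}\ge\cR_\eff$ is \emph{false}, so instead one uses $\cR_{\eps_n}(c,v)\ge\cR_\slow(c,v)$ only on $v\in\Gamma_\fast^\perp$-complemented directions — more precisely, write $\dot c^{\eps_n}=v_1^n+v_2^n$ with $v_2^n\in\Gamma_\fast$, use $\cR_{\eps_n}(c^{\eps_n},\dot c^{\eps_n})\ge \cR_\slow(c^{\eps_n},\dot c^{\eps_n})$ and that the $Q_\fast$-component of $\dot c^{\eps_n}$ converges weakly to $\dot\sfq$, to obtain $\liminf\int_0^T\cR_{\eps_n}(c^{\eps_n},\dot c^{\eps_n})\d t\ge \int_0^T \cR_\eff(\wt c,\dot{\wt c})\d t$ via the infimal-convolution formula \eqref{eq:Eff.cR}.

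For the \textbf{limsup/recovery estimate}, given a target $\wt c$ with $\fD_0(\wt c)<\infty$ — so $\wt c\in\rmC^0$, $\wt c(t)=\Psi(\sfq(t))$ with $\sfq=Q_\fast\wt c\in\W^{1,1}$, and $\int_0^T\{\cR_\eff(\wt c,\dot{\wt c})+\calS_\slow(\wt c)\}\d t<\infty$ — we must build $c^\eps\to\wt c$ in $\L^1$ with bounded energy and $\limsup\fD_\eps(c^\eps)\le\fD_0(\wt c)$. Since $\wt c$ already lies in $\scrEfa$, the fast slope $\calS_\fast(\wt c)\equiv 0$ and $\cR^*_\fast(\wt c,\cdot)$ on $\Gamma_\fast^\perp$ is zero, so the natural first guess $c^\eps\equiv\wt c$ almost works; the only obstruction is that $\wt c$ may touch $\pl\bfC$ (then $\D\cE(\wt c)$ and hence $\cR_\slow(\wt c,\cdot)$ behave badly) and that $\cR_\eff(\wt c,\dot{\wt c})=\inf_{v_2\in\Gamma_\fast}\cR_\slow(\wt c,\dot{\wt c}-v_2)$ need not equal $\cR_\eps(\wt c,\dot{\wt c})$. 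This is exactly where the positivity/monotonicity hypothesis \eqref{eq:InterEquilCond} enters: shift along the distinguished direction, $c^\eps := \Psi(\sfq(\cdot)+\theta(\eps)\ol\sfq)$ with $\theta(\eps)\to0$, to push the curve into $\bfC_+$ while staying on $\scrMsl=\scrEfa$; then add a correction in the fast directions $\Gamma_\fast$ realizing the optimal splitting in \eqref{eq:Eff.cR} so that $\cR_\eps(c^\eps,\dot c^\eps)\to\cR_\eff(\wt c,\dot{\wt c})$, using that velocities in $\Gamma_\fast$ cost $O(\eps)$ in $\cR_\eps$ (dual to the blow-up of $\cR^*_\eps$ on $\Gamma_\fast^\perp$). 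One checks $\cR^*_\eps(c^\eps,{-}\D\cE(c^\eps))=\calS_\eps(c^\eps)=\calS_\slow(c^\eps)\to\calS_\slow(\wt c)$ by continuity, and energies stay bounded because $\cE(c^\eps)=\cE(\Psi(\sfq+\theta\ol\sfq))$ is controlled uniformly.

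The \textbf{main obstacle} is the compactness argument in the liminf step: the functionals $\fD_\eps$ are non-convex in $c$, and the only a priori bounds are the degenerate ones on $Q_\fast\dot c^\eps$ in $\L^1$ and on $\int_0^T\calS_\fast(c^\eps)\d t$, which a priori control only the slow coordinate's oscillation and the distance to $\scrEfa$ — not $\dot c^\eps$ as a whole. Converting this into \emph{strong} $\L^1$-compactness of the full vector $c^\eps$, together with the identification $\wt c=\Psi\circ\sfq$, is the delicate point and is precisely where the unique-fast-equilibrium condition (UFEC) is indispensable: it turns the pointwise constraint $\wt c(t)\in\scrEfa$ plus $Q_\fast\wt c(t)=\sfq(t)$ into the single-valued selection $\wt c(t)=\Psi(\sfq(t))$, and continuity of $\Psi$ (which we establish in Section~\ref{su:DiscAssumpt}) then transfers the uniform convergence of $Q_\fast c^\eps$ to strong convergence of $c^\eps$ near the manifold.
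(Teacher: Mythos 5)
Your overall architecture (compactness from the $\W^{1,1}$-bound on $Q_\fast c^\eps$ plus $\int_0^T\calS_\fast(c^\eps)\dd t\to 0$, identification $c^0(t)=\Psi(Q_\fast c^0(t))$ via the UFEC and continuity of $\Psi$, Ioffe-type lower semicontinuity, recovery by shifting along $\ol\sfq$) matches the paper, but the pivotal inequality in your liminf step is wrong, and you explicitly dismiss the one that actually carries the proof. You assert that $\cR_{\eps}\ge\cR_\eff$ is \emph{false} and propose to use $\cR_{\eps}(c,v)\ge\cR_\slow(c,v)$ instead. Both claims are backwards: since $\cR^*_\eps=\cR^*_\slow+\frac1\eps\cR^*_\fast\le\cR^*_\slow+\chi_{\Gamma_\fast^\perp}=\cR^*_\eff$ (the fast part vanishes on $\Gamma_\fast^\perp$) and the Legendre transform reverses order, one has $\cR_\eps\ge\cR_\eff$ for every $\eps>0$ --- this is exactly Proposition \ref{pr:EffDissPot} --- whereas $\cR^*_\eps\ge\cR^*_\slow$ gives $\cR_\eps\le\cR_\slow$, so the inequality you substitute fails in general. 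With that substitute the velocity term has no useful lower bound; the correct route is precisely the estimate you discarded: $\cR_\eps(c,v)\ge\cR_\eff(c,v)=\wt\cR(c,Q_\fast v)$, so the integrand only sees $Q_\fast\dot c^\eps$, which converges weakly in $\L^1$, and Ioffe's theorem applied to the lower semicontinuous density $(c,\sfw)\mapsto\wt\cR(c,\sfw)+\calS_\slow(c)$, together with the strong convergence of $c^\eps$, closes the liminf. Note also that writing $\int_0^T\cR_\eff(\wt c,\dot{\wt c})\dd t$ presumes a full derivative $\dot{\wt c}$, which the limit need not have; only $Q_\fast\wt c\in\W^{1,1}$ is available, which is why the reformulation through $\wt\cR$ is not cosmetic.

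In the limsup your sketch is internally inconsistent and omits a needed reduction. Adding a corrector in the fast directions moves $c^\eps$ off $\scrMsl$, contradicting your own claim $\calS_\eps(c^\eps)=\calS_\slow(c^\eps)$ and reintroducing the singular term $\frac1\eps\calS_\fast(c^\eps)$, which would then have to be shown to vanish. No corrector is needed: after the shift $c^\eps$ can be taken \emph{constant in $\eps$}, because $\cR_\eps(c,v)\searrow\cR_\eff(c,v)$ pointwise (dual to $\cR^*_\eps\nearrow\cR^*_\eff$), so the infimal convolution in \eqref{eq:Eff.cR} is realized automatically in the limit. To pass to the limit in $\int_0^T\cR_\eps(c^0,\dot c^0)\dd t$ one still needs an integrable majorant and, before that, a curve which is genuinely in $\W^{1,1}$; the paper achieves both by a second reduction (piecewise affine interpolation of $\sfq=Q_\fast c^0$ with Jensen's inequality, giving $\dot\sfq\in\L^\infty$ and, after the positivity shift, $c^0\in\W^{1,\infty}$ with values in $\bfC_\delta$), and then concludes with a diagonal sequence. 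Your proposal should replace the fast-direction corrector by this Lipschitz-in-time reduction plus dominated convergence.
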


The proof of this result is the content of Section
\ref{sec:Proof-of-Theorem}. 

We emphasize that the integrand of $\fD_0$ is (i) degenerate (non-coercive) in
$\dot \sfq$ and (ii) singular (taking the value $\infty$). Concerning (i), we
recall that the definition of $\cR_\eff$ in \eqref{eq:Eff.cR} implies that
$\calR_\eff(c,\cdot)$ vanishes on $\Gamma_\fast$. In fact, it is only possible
to control the time derivative of $t \mapsto Q_\fast c(t) \in \R^{m_\fast}$.
Concerning (ii), we observe that $\cS_0$ equals $+\infty$ outside of $\scrEfa$,
which is a manifold of dimension $m_\fast$, and at each
$c\in \scrEfa\cap \bfC_+$ the subspaces $\rmT_c\scrEfa$ and $\Gamma_\fast$ are
transversal, see Section \ref{se:EffGS}. Assumption
\ref{as:UniqueFastEquil} will be needed to avoid jump-type behavior which can
occur otherwise,  see the counterexample discussed in Remark
\ref{re:Counterexa.fD}.

We now come to our main result on the EDP-convergence with tilting for
the cosh-type gradient systems $(\bfC,\cE,\cR_\eps)$ towards the
effective gradient system $(\bfC,\cE,\cR_\eff)$. 

The theorem enables to establish our main result on EDP-convergence
with tilting. The result is a direct consequence of the
$\Gamma$-convergence stated in Theorem \ref{thm:GammaCvg} and the
general fact for the Boltzmann entropy that tilting is equivalent to
changing the reference measure. In fact, introducing the relative
Boltzmann entropy $\calH(c\mathop{|}w)= \sum_{i=1}^{i_*}
w_i \LB (c_i/w_i)$ we have $\cE(c)=\calH(c\mathop{|}c_*)$ and obtain,
for all $\eta \in \R^{i_*}$, the relation 
\begin{equation}
  \label{eq:TiltRelBoltz}
  \cE^\eta(c) = \cE(c)-\eta{\cdot} c = \calH(c\mathop{|}\bbD^\eta c_*) + E_\eta\quad
\text{with } \bbD^\eta c:=(\ee^{\eta_i}c_i)_{i \in I} \text{ and } 
E_\eta = \sum_{i=1}^{i_*} (1{-}\ee^{\eta_i})c^*_i.
\end{equation}
Thus, we observe that tilting of a DBRS $(A,B,c_*,\wh\kappa^\eps)$
only changes the static property, namely the equilibrium $c_*$ into
$\bbD^\eta c_*$, while the dissipative properties encoded in the 
stoichiometric matrices $A$ and $B$ and the reaction coefficients
$\wh\kappa$ remain unchanged. 

\begin{thm}[EDP-convergence with tilting] \label{thm:tiltEDPcvg} 
Under the assumptions of Theorem
\ref{thm:GammaCvg}, the gradient systems $( \bfC ,\cE,\cR_{\eps})$
EDP-converge with tilting to the gradient system
$( \bfC ,\cE,\cR_{\mathrm{eff}})$.
\end{thm}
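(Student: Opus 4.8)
The plan is to reduce the claim to Theorem~\ref{thm:GammaCvg} by exploiting the tilt invariance of the cosh-type dissipation potential, i.e.\ the fact that $\cR^*_\eps$ does not depend on the equilibrium vector $c_*$. The key observation, recorded in \eqref{eq:TiltRelBoltz}, is that tilting the energy by $\eta\in\R^{i_*}$ amounts, up to an additive constant $E_\eta$, to replacing the DBRS $(A,B,c_*,\wh\kappa^\eps)$ by the DBRS $(A,B,\bbD^\eta c_*,\wh\kappa^\eps)$. Since $\cR^*_\eps$, $\cR^*_\slow$, $\cR^*_\fast$ are all independent of the equilibrium vector, and since the stoichiometric data $A$, $B$, the subspaces $\Gamma$, $\Gamma_\fast$, $\Gamma_\fast^\perp$, the operators $Q$, $Q_\fast$ and the spaces $\Qspace$, $\bfC^\fast_\sfq$ are unchanged, the tilted family $(\bfC,\cE^\eta,\cR_\eps)$ is, modulo the irrelevant constant $E_\eta$ (which drops out of $\fD^\eta_\eps$ since only $\D\cE^\eta=\D\cE-\eta$ enters), exactly the cosh-type gradient system of the new DBRS $(A,B,\bbD^\eta c_*,\wh\kappa^\eps)$.

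First I would verify that the new quadruple $(A,B,\bbD^\eta c_*,\wh\kappa^\eps)$ is again a fast-slow DBRS in the sense of Section~\ref{su:FastSlowRRE}: the equilibrium vector $\bbD^\eta c_* = (\ee^{\eta_i}c^*_i)_i$ lies in $\bfC_+=\,]0,\infty[^{i_*}$, it is $\eps$-independent, and the reaction coefficients $\wh\kappa^\eps_r=\kappa_r$ or $\kappa_r/\eps$ are unchanged; hence all structural hypotheses of Theorem~\ref{thm:GammaCvg} hold for it. Next I would check that Assumption~\ref{as:UniqueFastEquil} is stable under tilting. This is the only point requiring a little care: the set of fast equilibria for the tilted system is $\scrEfa^\eta=\bigset{c\in\bfC}{\forall r\in R_\fast:\ c^{\alpha^r}/(\bbD^\eta c_*)^{\alpha^r}=c^{\beta^r}/(\bbD^\eta c_*)^{\beta^r}}$; using $c^\gamma = (\ee^{-\eta}\!\cdot)^{\ldots}$ bookkeeping one sees $\scrEfa^\eta=\bbD^{\eta}\widetilde{\scrEfa}$ for a DBRS-type set, but more simply one notes that $Q_\fast$, $\bfC^\fast_\sfq$ are tilt-independent while the minimizer of $\cE^\eta$ on $\bfC^\fast_\sfq$ still exists and is unique by strict convexity of $\cE^\eta$; combining this with Lemma~\ref{le:Equilibria} applied to the new DBRS shows the thermodynamic equilibrium is again the unique steady state in each $\bfC^\fast_\sfq$, and that $\Psi^\eta$ inherits the positivity-monotonicity property \eqref{eq:InterEquilCond} from $\Psi$ (the defining minimization problems differ only by the linear functional $-\eta\cdot c$, which does not affect positivity on $\bfC_+$ for the relevant comparison).

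Once stability of the hypotheses is established, Theorem~\ref{thm:GammaCvg} applied to $(A,B,\bbD^\eta c_*,\wh\kappa^\eps)$ yields $\fD^\eta_\eps\MoscoLimE\fD^\eta_0$ with
\[
\fD^\eta_0(c)=\int_0^T\!\big\{\cR_\eff(c,\dot c)+\cS^\eta_0(c)\big\}\d t,\qquad
\cS^\eta_0(c)=\cS^\eta_\slow(c)+\chi_{\scrEfa^\eta}(c),
\]
on $\rmC^0([0,T];\bfC)$ with $Q_\fast c\in\rmW^{1,1}$, and $+\infty$ otherwise, where $\cR_\eff$ is the \emph{same} effective primal dissipation potential \eqref{eq:Eff.cR} as in the untilted case, because it is built only from $\cR_\slow$ and $\Gamma_\fast$, both tilt-independent. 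The remaining identification is to recognise $\fD^\eta_0$ as the dissipation functional of the \emph{untilted} effective gradient system $(\bfC,\cE,\cR_\eff)$ tilted by $\eta$: since $\D\cE^\eta=\D\cE-\eta$ and $\cR^*_\eff(c,\cdot)$ from \eqref{eq:EffDualDissPot} is tilt-independent, one has $\cR^*_\eff(c,\eta-\D\cE(c))=\cR^*_\eff(c,-\D\cE^\eta(c))$, and the slope-function identity \eqref{eq:Eff.calS*} for the new DBRS gives $\cR^*_\eff(c,\eta-\D\cE(c))=\cS^\eta_0(c)$ on $\bfC_+$; hence $\fD^\eta_0(c)=\int_0^T\{\cR_\eff(c,\dot c)+\cR^*_\eff(c,\eta-\D\cE(c))\}\d t$, which is precisely the dual-sum form required in Definition~\ref{def:EDPcvg}(3) and Definition~\ref{def:tiltEDP}. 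Together with the trivial $\cE^\eta_\eps\Moscolim\cE^\eta$ (linear tilts are weakly continuous, and $\cE$ is convex and lsc), this establishes simple EDP-convergence of $(\bfC,\cE^\eta_\eps,\cR_\eps)$ to $(\bfC,\cE^\eta,\cR_\eff)$ for every $\eta\in\R^{i_*}$, which is exactly EDP-convergence with tilting.

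\textbf{Main obstacle.} The genuine content of the proof is \emph{not} re-running the $\Gamma$-convergence machinery — that is handed to us verbatim by Theorem~\ref{thm:GammaCvg} thanks to tilt invariance — but rather the bookkeeping lemma that the hypotheses of Theorem~\ref{thm:GammaCvg}, especially the unique-fast-equilibrium condition (UFEC) together with the positivity-monotonicity assumption \eqref{eq:InterEquilCond}, transfer from the original DBRS to every tilted DBRS $(A,B,\bbD^\eta c_*,\wh\kappa^\eps)$. I expect the monotonicity half of \eqref{eq:InterEquilCond} to be the delicate point, since it is a property of the nonlinear map $\Psi^\eta$ rather than of the linear constraint geometry; it should follow because $\Psi^\eta(\sfq)$ is characterised as the unique zero of $\bm R_\fast$ (which is tilt-independent as a vector field once written in the $c$-variables with the tilted rates absorbed) in $\bfC^\fast_\sfq$, so that $\scrEfa^\eta$ is, after the diagonal change of variables $c\mapsto\bbD^{-\eta/?}c$ hidden in the mass-action monomials, a smooth reparametrisation of $\scrEfa$; making this precise — and in particular showing the sign condition $\Psi^\eta(\sfq+\theta\ol\sfq)_i\ge\Psi^\eta(\sfq)_i$ survives — is where the real work lies, though it is routine once one writes $\bm R_\fast$ for the tilted DBRS explicitly and uses Lemma~\ref{le:Equilibria} to see it coincides with a rescaled version of the original $\bm R_\fast$.
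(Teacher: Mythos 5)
Your overall strategy coincides with the paper's proof: via \eqref{eq:TiltRelBoltz} the tilt $\eta$ is absorbed into a change of the reference equilibrium $c_*\mapsto \bbD^\eta c_*$, the cosh potentials $\cR^*_\eps$ (and hence $\cR_\eff$) do not see $c_*$, so Theorem \ref{thm:GammaCvg} applied to the tilted DBRS $(A,B,\bbD^\eta c_*,\wh\kappa^\eps)$ gives the tilted Mosco-limit in dual-sum form with an $\eta$-independent $\cR_\eff$; this is exactly the paper's two-step argument, with the hypothesis transfer isolated in Corollary \ref{co:AssumpTilt}.

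There is, however, a genuine flaw in the argument you prefer for transferring the UFEC to the tilted system. Existence and uniqueness of the minimizer of $\cE^\eta$ on $\bfC^\fast_\sfq$ holds for \emph{every} DBRS (Proposition \ref{prop:InterEquil}), and combining it with Lemma \ref{le:Equilibria} only identifies the unique \emph{interior} equilibrium; it does not exclude boundary equilibria, and excluding them is precisely the content of the UFEC, which can genuinely fail (Remark \ref{re:Exa.noUEC}). So "the minimizer is unique, hence it is the unique steady state" is not a proof. The correct argument is the one you mention in passing and then set aside: the equilibrium conditions $c^{\alpha^r}/c_*^{\alpha^r}=c^{\beta^r}/c_*^{\beta^r}$, $r\in R_\fast$, are invariant under replacing $(c,c_*)$ by $(\bbD^\eta c,\bbD^\eta c_*)$, which gives $\scrEfa^\eta=\bbD^\eta\scrEfa$ and $\scrMsl^\eta=\bbD^\eta\scrMsl$, so $\scrEfa=\scrMsl$ transfers; this is Corollary \ref{co:AssumpTilt}. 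Concerning the positivity--monotonicity condition \eqref{eq:InterEquilCond}: note that $\Psi^\eta\neq\bbD^\eta\Psi$ in general, because the constraint $Q_\fast c=\sfq$ is not equivariant under $\bbD^\eta$; one only gets $\Psi^\eta=\bbD^\eta\circ\Psi\circ\phi_\eta$ with a nonlinear reparametrization $\phi_\eta$ of $\Qspace$, so your "rescaled $\bm R_\fast$" heuristic does not immediately yield the componentwise monotonicity of $\theta\mapsto\Psi^\eta(\sfq{+}\theta\ol\sfq)$. You have correctly located this as the delicate point; the paper's own proof does not verify \eqref{eq:InterEquilCond} for the tilted system either (it only invokes Corollary \ref{co:AssumpTilt} for the UFEC), so on this item your proposal is no weaker than the paper, but it is not closed by the sketch you give.
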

\begin{proof} \underline{\emph{Step 1. Simple EDP-convergence:}} Since
  $\cE_\eps=\cE$ is continuous we obviously have $\cE_\eps \Moscolim
  \cE$. Moreover, Theorem \ref{thm:GammaCvg} provides $\fD_\eps
  \MoscoLimE\fD_{0}$. Finally, the relation \eqref{eq:Eff.calS*} shows
  that the integrand of $\fD_0$ has the desired dual structure
  $\cR_\eff(c,\dot c){+}\cR^*_\eff(c,{-}\D\cE(c))$, where we used
  \eqref{eq:Eff.calS*}. Thus, we have
  established the simple  EDP-convergence of $(\bfC,\cE,\cR_\eps)$ to
  the effective gradient system $( \bfC ,\cE,\cR_{\mathrm{eff}})$.

  \underline{\emph{Step 2. EDP-convergence with tilting:}} We use that
  $\cE^\eta=\calH(\cdot\mathop{|}\bbD^\eta c_*)+E_\eta$ is of the same type as
  $\cE = \calH(\cdot\mathop{|}c_*)$ if we ignore the irrelevant constant energy
  shift. Clearly, the new fast-slow RRE \eqref{eq:RRE.fs} has the same
  $A$, $B$, $\kappa_r$, $i_*$, and hence $Q_\fast$; only $c_*$ is replaced
  by $\bbD^\eta c_*$. Thus, all
  structural assumptions are the same, and Theorem \ref{thm:GammaCvg}
  is applicable for all $\eta\in \R^{i_*}$. In particular, the UFEC in 
  \eqref{eq:UFEC} holds for the tilted DBRS by Corollary
  \ref{co:AssumpTilt}. Thus, $(\bfC,\cE^\eta,\cR_\eps)$ EDP-converges
  to $(\bfC,\cE^\eta,\cR_\eff)$ according to Step 1. Since the
  effective dissipation potential $\cR_\eff$ is independent of
  $\eta\in \R^{i_*}$, we have shown EDP-convergence with tilting.
\end{proof}

\subsection{Discussion of the UFEC and definition of $\scrMsl$}
\label{su:DiscAssumpt}

Here we first prove properties of the function $\Psi$ that provides the
fast equilibria (see Assumption \ref{as:UniqueFastEquil}). Secondly, we show
that UFEC is invariant under tilting.

The stoichiometric subsets
$\bfC^\fast_\sfq:=\bigset{c\in \bfC}{ Q_\fast c = \sfq}$ are the intersection
of the affine subspace $\bigset{y\in \R^{i_*}}{Q_\fast y = \sfq}$ of dimension
$m_\fast$ with the simplicial convex cone $\bfC={[0,\infty[}^{i_*}$. Hence,
each $\bfC^\fast_\sfq$ is a closed and convex simplex of dimension
$m(\sfq) \in \{0,1,\ldots,m_\fast\}$. The simplex-boundary $\pl\bfC^\fast_\sfq$
of such a simplex is the union of its boundary simplices of dimension
$m(\sfq) -1$. A two-dimensional $n$-gon has $n$ intervals as boundary, and an
interval has $2$ points as boundary. For the case of a point, which is the only
$0$-dimensional simplex, we say that the boundary is empty. We say that an
equilibrium $c \in \scrEfa$ is a \emph{boundary equilibrium} if $c\in \pl\bfC^\fast_\sfq$.
Otherwise $c\in \scrEfa$ is called an \emph{interior equilibrium}. 
  
The following result provides an alternative construction of the mapping
$\Psi:\Qspace \to \bfC$ that is independent of the UFEC \eqref{eq:UFEC}. We
observe that $\Psi$ is defined for every fast DBRS
$(A^\fast, B^\fast, c_*, \kappa^\fast)$ and that $\Psi$ only depends on
$A^\fast{-}B^\fast$ and $c_*$.  The first part of the next result is also
shown in \cite[Prop.\,2.1]{MiHaMa15UDER} or \cite[Lem.\,2.3]{DiLiZi18EGCG}.

\begin{prop}[Existence and continuity of interior equilibria]
  \label{prop:InterEquil} For a fast DBRS\linebreak[4]
  $(A^\fast\!,B^\fast\!,c_*,\kappa^\fast)$ the energy $\cE$ only depends on $c_*$,
  and $Q_\fast$ only depends on $\Gamma_\fast{=}\mathOP{im}(A^\fast{-}B^\fast)$.
  For each $\sfq \in \Qspace$, denote by $\Psi(\sfq)$ the unique 
  minimizer of $\cE$ on $\bfC^\fast_\sfq$. Then, $\Psi(\sfq)$ is the only
  equilibrium of $\dot c = \bm{R}_\fast(c)$ that lies in the interior
  $\bfC^\fast_\sfq\setminus \pl\bfC^\fast_\sfq$. Moreover, the mapping
  $\Psi: \Qspace \to \bfC$ is continuous, and
  $\Psi:\mathOP{int}\Qspace \to \mathOP{int}\bfC$ is analytic.
\end{prop}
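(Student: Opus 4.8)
The claim has three parts: (a) $\Psi(\sfq)$, defined as the minimizer of $\cE$ over $\bfC^\fast_\sfq$, is the unique \emph{interior} equilibrium of $\dot c=\bm R_\fast(c)$; (b) $\Psi$ is continuous on $\Qspace$; (c) $\Psi$ is analytic on $\mathOP{int}\Qspace$. The first sentence — that $\cE$ depends only on $c_*$ and $Q_\fast$ only on $\Gamma_\fast$ — is immediate from the defining formulas \eqref{eq:RRE.GS.a} and \eqref{eq:Def.Qfast}, so I would dispose of it in one line.

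\textbf{Part (a).}
First I would note that $\cE$ restricted to the compact convex simplex $\bfC^\fast_\sfq$ is continuous and strictly convex (since $\LB$ is strictly convex), hence has a unique minimizer $\Psi(\sfq)$; this handles existence and uniqueness of the minimizer. Next I would identify minimizers of $\cE$ on $\bfC^\fast_\sfq$ with equilibria. The key computation is the one already used in Lemma \ref{le:Equilibria}: for $c\in\bfC_+$ we have $\calS_\fast(c)=\cR^*_\fast(c,-\D\cE(c))$, which vanishes iff $c\in\scrEfa$. More precisely, at an interior point $c\in\bfC^\fast_\sfq\cap\bfC_+$, the first-order optimality condition for minimizing $\cE$ on the affine slice $\{Q_\fast y=\sfq\}$ reads $\D\cE(c)\in(\mathOP{ker}Q_\fast)^\perp=\Gamma_\fast^\perp$ (using \eqref{eq:Def.Qfast}); by the logarithm rule \eqref{eq:Logarithm}, $\D\cE(c)\perp\gamma^r$ for all $r\in R_\fast$ is exactly the condition $c^{\alpha^r}/c_*^{\alpha^r}=c^{\beta^r}/c_*^{\beta^r}$, i.e.\ $c\in\scrEfa$. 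Conversely any such interior equilibrium satisfies the (necessary and, by convexity, sufficient) optimality condition, hence equals the unique minimizer. It remains to rule out that the minimizer lies on $\pl\bfC^\fast_\sfq$: this follows from the standard entropy-barrier argument — if some component $c_i\to 0$ the one-sided derivative of $\LB(c_i/c^*_i)$ is $-\infty$, so the minimizer is pushed into the interior (one must check that moving in an admissible direction keeping $Q_\fast c=\sfq$ fixed is possible; this uses that $\bfC^\fast_\sfq$ is full-dimensional in its affine hull, and the case $m(\sfq)=0$ is trivial since then $\bfC^\fast_\sfq$ is a single point which is automatically an interior equilibrium in the convention adopted). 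This is the part requiring the most care, and I would flag the dimension-0 degenerate simplices explicitly.

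\textbf{Parts (b) and (c).}
For continuity I would use a soft compactness argument: if $\sfq_n\to\sfq$ in $\Qspace$, the points $\Psi(\sfq_n)$ lie in a bounded set (bounded $\cE$-sublevels intersected with the $\sfq_n$), extract a convergent subsequence $\Psi(\sfq_n)\to c^\star\in\bfC^\fast_\sfq$, and use lower semicontinuity of $\cE$ together with a recovery-sequence construction (approximating the minimizer of $\cE$ on $\bfC^\fast_\sfq$ by admissible points in $\bfC^\fast_{\sfq_n}$) to conclude $\cE(c^\star)\le\min_{\bfC^\fast_\sfq}\cE$, so $c^\star=\Psi(\sfq)$ by uniqueness; since every subsequence has a further subsequence converging to the same limit, $\Psi(\sfq_n)\to\Psi(\sfq)$. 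For analyticity on $\mathOP{int}\Qspace$, I would apply the analytic implicit function theorem to the system $F(c,\sfq):=\big(Q_\fast c-\sfq,\ (\gamma^r\cdot\D\cE(c))_{r\in R_\fast}\big)=0$, which is real-analytic in $(c,\sfq)$ on $\bfC_+\times\R^{m_\fast}$ (the log is analytic on $\bfC_+$) and has solution $\Psi(\sfq)$ for $\sfq\in\mathOP{int}\Qspace$ (where $\Psi(\sfq)\in\bfC_+$). The Jacobian in $c$ has block structure: the $Q_\fast$-block is the constraint, and the second block is $\mathOP{diag}(1/c_i)$ restricted to $\Gamma_\fast$-directions; its invertibility on $\R^{i_*}$ reduces to the standard fact that $\Gamma_\fast$ and $\Gamma_\fast^\perp$ split $\R^{i_*}$ together with the positivity of the Hessian $\D^2\cE(c)=\mathOP{diag}(1/c_i)$ on $\Gamma_\fast$ — essentially the second-order optimality condition for the strictly convex minimization, which is where part (a)'s convexity pays off again.

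\textbf{Main obstacle.}
The subtle step is part (a) — specifically, showing the minimizer does not escape to $\pl\bfC^\fast_\sfq$ and correctly handling the degenerate low-dimensional simplices $\bfC^\fast_\sfq$ (where $m(\sfq)<m_\fast$), since there the "interior" in the convention of the paper may coincide with a single point and the entropy-barrier argument must be phrased relative to the affine hull of $\bfC^\fast_\sfq$, not all of $\R^{i_*}$. Everything else (continuity via $\Gamma$-convergence-style compactness, analyticity via the analytic IFT) is routine once this is pinned down.
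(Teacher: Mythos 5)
Your proposal follows essentially the same route as the paper's proof: unique minimizer by strict convexity, the entropy-barrier argument for interiority, identification of interior equilibria with the first-order condition $\D\cE(c)\in(\mathrm{ker}\,Q_\fast)^\perp=\Gamma_\fast^\perp$ via the logarithm rules and Lemma \ref{le:Equilibria}, continuity by a direct-method argument with a recovery sequence in $\bfC^\fast_{\sfq_n}$, and analyticity by the analytic implicit function theorem with the same kernel argument for invertibility ($v\in\Gamma_\fast$ and $\bbH(c)v\in\Gamma_\fast^\perp$ force $v^\top\bbH(c)v=0$, hence $v=0$).

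Two technical points in your write-up need repair. First, existence of the minimizer: you invoke compactness of $\bfC^\fast_\sfq$, but this set need not be bounded — in the paper's own Remark \ref{re:Exa.noUEC} one has $Q^{(2)}_\fast c=0$ and hence $\bfC^\fast_0=\bfC$ — and a continuous strictly convex function on an unbounded closed convex set need not attain its infimum. The paper's Step 1 uses coercivity (superlinearity) of $\cE$ instead, which is the correct fix and is also what your continuity argument implicitly relies on when you speak of bounded sublevel sets (there, note that boundedness of $\cE(\Psi(\sfq_n))$ itself requires the upper bound coming from the recovery sequence before coercivity can be applied). Second, in the analyticity step your system $F(c,\sfq)=\big(Q_\fast c-\sfq,\ (\gamma^r\cdot\D\cE(c))_{r\in R_\fast}\big)$ is in general overdetermined, since the vectors $\gamma^r$, $r\in R_\fast$, are not assumed linearly independent (the paper emphasizes this); the implicit function theorem needs a square invertible Jacobian. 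Either replace the second block by the equations for a subset of the $\gamma^r$ forming a basis of $\Gamma_\fast$ (locally the solution sets coincide), or argue as the paper does with the Lagrange system $F(c,\mu)=\big(\D\cE(c)-Q_\fast^\top\mu,\,Q_\fast c\big)=(0,\sfq)$, whose Jacobian is square and invertible by exactly the positive-definiteness argument you sketch. Finally, the degenerate slices with $0<m(\sfq)<m_\fast$, which you rightly flag as the delicate case, are handled in the paper simply by discarding the components $c_i$ that vanish identically on $\bfC^\fast_\sfq$ and repeating the interior/optimality argument for the reduced system; this is consistent with your ``relative to the affine hull'' remark, so no further idea is missing there, but the step should be carried out rather than only flagged.
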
 
\begin{proof} \emph{Step 1. Uniqueness and existence  of minimizer:} The
existence of a global minimizer follows from the coercivity of $\cE$ and 
the closedness of $\bfC^\fast_\sfq$. The uniqueness follows from the
convexity of $\bfC^\fast_\sfq$ and the strict convexity of $\cE$. 

\emph{Step 2. Interior property:} If $\bfC^\fast_\sfq$ is a singleton
$\{\wh c\}$, then $\Psi(\sfq)=\wh c$ automatically lies in the
interior. If $c^\pl$ is a point in the boundary and $c^\circ$ a point
in the interior of $\bfC^\fast_\sfq$, then there is at least one
$k\in I$ such that $c^\pl_k=0$ and $c^\circ_k>0$. Since
$c_k \mapsto c^*_k  \LB (c_k/c^*_k)$ has slope $-\infty$ at
$c_k=0$, we conclude that $c^\pl$ cannot be a minimizer of
$\cE: c\mapsto \sum_i c^*_i  \LB (c_i/c^*_i)$. Hence,
$\wh c=\Psi(\sfq)$ lies in the interior of $\bfC_\sfq^\fast$.

\emph{Step 3. Unique equilibrium property:} Since $\cE$ is a strict Liapunov
function for the RRE, the minimizer $\Psi(\sfq)$ has to be an
equilibrium. 

 For the uniqueness, we consider first the case
$\mafo{dim}(\bfC_\sfq^\fast)=m_\fast $,in which case interior points in
$\bfC_\sfq^\fast$ lie in $\bfC_+$. Hence, for any other  equilibrium
$c_\rme$ in the interior of $\bfC^\fast_\sfq$ the derivative
$\D\cE(c_\rme)= \big( \log(c^\rme_i/c^*_i)\big)_i$ is well-defined. Moreover,
Lemma \ref{le:Equilibria} implies 
$c_\rme^{\alpha^r}/c_*^{\alpha^r} = c_\rme^{\beta^r}/c_*^{\beta^r}$ for all
$r \in R_\fast$. These two properties yield $\D\cE(c_\rme)\cdot \gamma^r=0$ for
$r\in R_\fast$. But $\D\cE(c_\rme)\in \Gamma_\fast^\perp$ and
$\bfC_\sfq^\fast \subset c_\rme + \Gamma_\fast$ guarantee that $c_\rme$
minimizes the convex functional $\cE$ on $\bfC^\fast_\sfq$, which yields
$c_\rme=\Psi(\sfq)$.

If $\mafo{dim}(\bfC_\sfq^\fast)= m(\sfq)<m_\fast $ then there exists
$I_0\subset I$ with $m_\fast{-}m(\sfq)$ elements such that
$\bfC_\sfq^\fast\subset \bigset{c\in \bfC}{ c_i=0 \text{ for all } i\in I_0}$
and that for interior points $\wt c\in
\bfC_\sfq^\fast\setminus \pl\bfC_\sfq^\fast$ we have $\wt c_i>0$ for $i\not\in
I_0$. Hence, the above argument can be applied to the reduced system for $\wt
c=(c_i)_{i \in I\setminus I_0} $, i.e.\ the components $c_i=0$, $i \in I_0$ are
simply ignored. 

\emph{Step 4. Continuity of $\Psi$: } Consider a sequence
$\sfq_k\to q_\infty$ and let $c_k=\Psi(\sfq_k)$, then we have to show
that $c_k\to c_\infty$.  We set
$\alpha_k=\cE(c_k)=\min\bigset{\cE(c)}{ c \in \bfC^\fast_{\sfq_k}}$
and choose a subsequence $(k_l)$ such that
$\alpha:= \liminf_{k\to \infty} \alpha_k = \lim_{l\to \infty}
\alpha_{k_l}$. By coercivity of $\cE$ we know that $(c_k)$ is bounded
that there exists a further subsequence (not relabeled) with
$c_{k_l}\to \wt c$ and
$Q\wt c = \lim Qc_{k_l}=\lim \sfq_k=\sfq_\infty$. Hence, we obtain the
estimate
\begin{equation}
  \label{eq:cE.contin}
  \cE(c_\infty)\leq \cE(\wt c) =\lim_{l\to \infty} \cE(c_{k_l}) =
\lim_{l\to \infty} \alpha_{k_l} = \alpha.
\end{equation}
Moreover, our given $c_\infty$ and each $\eps>0$ there exists a $\delta>0$ such that 
$Q\big(B^{\R^{i_*}}_\eps(c_\infty)\cap \bfC\big)$ contains the set
$B_\delta^{\R^{m_\fast}}(\sfq_\infty) \cap \Qspace$. Thus, we find a
sequence $(\wh c_k)_{k\in \N}$ with $\wh c_k \to c_\infty$ and $Q\wh
c_k=\sfq_k\to \sfq_\infty$. Since $\cE$ is continuous we conclude 
\[
\cE(c_\infty) =\lim_{k\to \infty} \cE(\wh c_k) \geq \liminf_{k\to
  \infty} \cE(c_k) = \alpha.
\]
With \eqref{eq:cE.contin} we conclude $\cE(\wt c)=\cE(c_\infty)$,
which implies $c_k\to c_\infty=\Psi(\sfq_\infty)$, as desired.  

\emph{Step 5. Analyticity of $\Psi$:} For $\sfq\in \mathOP{int}\Qspace$ we have
$\Psi(\sfq) \in \bfC_+=\mathOP{int}\bfC$. Hence, $c=\Psi(\sfq)$ can be
characterized by the Lagrange principle for constrained minimizers using the
Lagrange function $L(c,\lambda) = \calE(c) - \mu \cdot (Q_\fast c -\sfq)$
with $\mu \in \R^{m_\fast}$. This characterization leads to the equation $F(c,\mu) = (0,\sfq)$, where 
\[
F(c,\mu):=\big(\rmD \calE(c) - Q_\fast^\top \mu, Q_\fast c\big) .
\]
Obviously, $F:\bfC_+\ti \R^{m_\fast} \to \R^{i_*}\ti \R^{m_\fast}$ is analytic,
and we have $F(\Psi(\sfq),\wt\mu(\sfq))=(0,\sfq)$ for a suitable $\wt\mu$.  If
we can show that $\rmD F(\Psi(\sfq),\mu)$ is invertible for all $\sfq \in
\mathOP{int}\Qspace $, then the implicit
function theorem implies that the mapping $\sfq\to (\Psi(\sfq),\wt\mu(\sfq))$
is analytic as well.

The Jacobian of $F(c,\mu)$ is given by
$\D F(c,\mu) = \binom{\D^2 \cE(c)\ -Q^\top_\fast}{ Q_\fast \qquad 0}$, and we
prove that $\D F(c,\mu)$ is invertible by showing that its kernel is
trivial. Let $(w,\eta)$ be such that $\D F(c,\mu)(w,\eta)^\top=0$. We conclude
that $\D^2\cE(c)w=Q^\top_\fast\eta$ and $Q_\fast w=0$. Since $c$ is positive,
the Hessian $\D^2\cE(c)$ is invertible, and hence, we have
$Q_\fast \D^2\cE(c)^{-1}Q^\top_\fast\eta=0$. Multiplying $\eta$ from the left
and using that $\D^2\cE(c)^{-1}$ is a positive matrix, we have
$Q^\top_\fast\eta=0$. Since $Q^\top_\fast$ is injective, we conclude that
$\eta=0$ which implies that also $w=0$ due to
$\D^2\cE(c)w=Q^\top_\fast \eta=0$.
\end{proof} 

 For later use we observe that by construction we have the relation
\begin{equation}
  \label{eq:Psi.Relation}
  Q_\fast \Psi(\sfq) = \sfq \quad \text{for all } \sfq \in \Qspace.
\end{equation}
A crucial role in our further analysis will be played by the image of
$\Psi$, which we call the slow manifold: 
\begin{equation}
  \label{eq:def.Mslow}
  \scrMsl := \mathOP{im}(\Psi) = \bigset{ \Psi(\sfq)}{ \sfq \in
    \Qspace } \ \subset \ \bfC,
\end{equation}
which is a closed set that is contained in the set of the fast
equilibria $\scrEfa$ defined in \eqref{eq:def.scrEfast}.  The UFEC in
\eqref{eq:UFEC} is made to guarantee that $\scrMsl$ contains all the
fast equilibria:
\begin{equation}
  \label{eq:Mslow=graphPsi}
 \text{(UFEC)}  \ \Longleftrightarrow \  \scrEfa = \scrMsl. 
\end{equation}

It is important to emphasize that $\scrEfa$ can be strictly bigger
than $\scrMsl$, but by Proposition \ref{prop:InterEquil} the 
equilibria in $\scrEfa\setminus \scrMsl$ must 
be so-called boundary equilibria, i.e.\ they lie in
$\pl\bfC^\fast_\sfq \subset \pl\bfC$.  (In the case that
$\bfC^\fast_\sfq\subset \pl \bfC$ the equilibrium $\Psi(\sfq)$ lies
in the boundary of $\bfC$, but is not a boundary equilibrium!)

The equilibria on $\scrMsl$ are stable, since they
are global minimizers of the Liapunov function $\cE$ in their
invariant subset. In contrast, possible boundary equilibria are always
unstable, because starting near the equilibrium but in the interior of
$\bfC^\fast_\sfq$ gives a solution moving towards $\Psi(\sfq)$, see
Figure \ref{fig:NotUEC}.  The UFEC may fail if one has
autocatalytic reactions where the product 
$\alpha^r_i\beta^r_i $ is strictly positive for some $i \in I$; see the
example treated in Remark \ref{re:Exa.noUEC}.

The following simple result provides the characterization of the slow
manifold $\scrMsl$ in terms of the potential force $\D\cE(c)$ and the
annihilator of the fast subspace $\Gamma_\fast$.

\begin{lem}\label{le:DEGamFast} Consider a fast DBRS
  $(A^\fast,B^\fast,c_*,\kappa^\fast)$. Then for $c\in \bfC_+$ we have
\[
\D\cE(c)  \in \Gamma_\fast^\perp =\bigset{\xi\in \R^{i_*}}{ \xi\cdot
  \gamma^r \text{ for }r\in R_\fast} \quad \Longleftrightarrow \quad 
c\in \scrMsl.
\] 
\end{lem}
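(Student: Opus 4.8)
The plan is to establish the two implications separately, exploiting that for $c\in\bfC_+$ the derivative $\D\cE(c)=\big(\log(c_i/c_i^*)\big)_{i\in I}$ is well-defined, and that by the logarithm rule \eqref{eq:Logarithm} one has $\gamma^r\cdot\D\cE(c)=\log\big(c^{\alpha^r}/c_*^{\alpha^r}\big)-\log\big(c^{\beta^r}/c_*^{\beta^r}\big)$ for every reaction $r$.

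\medskip
\noindent\emph{Direction ``$\Longleftarrow$''.}
Suppose $c\in\scrMsl\cap\bfC_+$, so $c=\Psi(\sfq)$ with $\sfq=Q_\fast c$. By Proposition \ref{prop:InterEquil}, $\Psi(\sfq)$ is an equilibrium of $\dot c=\bm R_\fast(c)$, hence $c\in\scrEfa$, and by the characterization in \eqref{eq:def.scrEfast} (equivalently Lemma \ref{le:Equilibria} applied to the fast system) we have $c^{\alpha^r}/c_*^{\alpha^r}=c^{\beta^r}/c_*^{\beta^r}$ for all $r\in R_\fast$. Plugging this into the logarithm identity above gives $\gamma^r\cdot\D\cE(c)=0$ for all $r\in R_\fast$, i.e.\ $\D\cE(c)\in\Gamma_\fast^\perp$.

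\medskip
\noindent\emph{Direction ``$\Longrightarrow$''.}
Conversely, assume $c\in\bfC_+$ with $\D\cE(c)\in\Gamma_\fast^\perp$. Then $\gamma^r\cdot\D\cE(c)=0$ for all $r\in R_\fast$, so the logarithm identity forces $c^{\alpha^r}/c_*^{\alpha^r}=c^{\beta^r}/c_*^{\beta^r}$ for all $r\in R_\fast$; by \eqref{eq:def.scrEfast} this means $c\in\scrEfa$. It remains to upgrade membership in $\scrEfa$ to membership in $\scrMsl$. Set $\sfq:=Q_\fast c$, so $c\in\bfC^\fast_\sfq$, and since $c\in\bfC_+$ the simplex $\bfC^\fast_\sfq$ has full dimension $m_\fast$ and $c$ lies in its interior; hence $c$ is an interior equilibrium. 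By the uniqueness part of Proposition \ref{prop:InterEquil} (Step 3 there), the only interior equilibrium of the fast system in $\bfC^\fast_\sfq$ is $\Psi(\sfq)$, so $c=\Psi(\sfq)\in\scrMsl$. (Alternatively one argues directly: $\D\cE(c)\in\Gamma_\fast^\perp$ together with $\bfC^\fast_\sfq\subset c+\Gamma_\fast$ shows $\D\cE(\cdot)$ annihilates all admissible variations at $c$, so by convexity of $\cE$ the point $c$ minimizes $\cE$ on $\bfC^\fast_\sfq$, i.e.\ $c=\Psi(\sfq)$.)

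\medskip
\noindent\emph{Main obstacle.}
The only genuinely nontrivial input is the uniqueness of the interior equilibrium, which is precisely the content of Step 3 of Proposition \ref{prop:InterEquil} and relies on the detailed-balance structure (via Lemma \ref{le:Equilibria}, which does not require linear independence of the $\gamma^r$). Everything else is a direct computation with logarithms and the definitions of $Q_\fast$, $\Gamma_\fast$, and $\scrEfa$; note that the restriction to $c\in\bfC_+$ is essential since $\D\cE(c)$ is not finite on $\pl\bfC$.
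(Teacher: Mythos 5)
Your proof is correct and follows essentially the same route as the paper: the logarithm identity shows that $\D\cE(c)\in\Gamma_\fast^\perp$ is equivalent to $c^{\alpha^r}/c_*^{\alpha^r}=c^{\beta^r}/c_*^{\beta^r}$ for all $r\in R_\fast$, and then Proposition \ref{prop:InterEquil} (uniqueness of the interior equilibrium, i.e.\ the minimizer $\Psi(\sfq)$) yields the equivalence with $c\in\scrMsl$. The paper's proof is just terser; your elaboration of the ``$\Longrightarrow$'' direction (interior point of $\bfC^\fast_\sfq$ plus uniqueness from Step~3 of the proposition, or equivalently the convexity argument) is exactly what the paper implicitly invokes.
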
 
\begin{proof} Using $\D\cE(c)= \big( \log(c_i/c^*_i)\big)_{i\in I}$
we find, for all $r\in R_\fast$,  
\[
  0 = \D\cE(c)\cdot \gamma^r = \log\Big( 
  \frac{c^{\alpha^r}}{c_*^{\alpha^r}} \;
  \frac{c_*^{\beta^r}}{c^{\beta^r}} \Big)
\quad \Longleftrightarrow \quad 
\frac{c^{\alpha^r}}{c_*^{\alpha^r}} = \frac{c^{\beta^r}}{c_*^{\beta^r}}
\]
With Proposition \ref{prop:InterEquil} and the definition of $\scrMsl$ in
\eqref{eq:def.Mslow} we obtain the desired result. 
\end{proof}

Finally, we show that the UFEC is invariant under tilting. This is a
nice consequence of the fact that tilting in systems satisfying the
DBC allows us easily to follow the changes in the set $\scrEfa$ of
fast equilibria. 

\begin{cor}[UFEC and tilting]\label{co:AssumpTilt} 
  Consider a fast DBRS $(A^\fast,B^\fast,c_*,\kappa^\fast)$ and general tilt
  vectors $\eta\in \R^{i_*}$. Denote by $\scrEfa^\eta$ and $\scrMsl^\eta$ the
  set of equilibria and the slow manifold, respectively, for the fast DBRS
  $(A^\fast,B^\fast,\bbD^\eta c_*,\kappa^\fast)$. Then, the following holds:

(a) $\scrEfa^\eta = \bbD^\eta\scrEfa^0$ and  $\scrMsl^\eta= \bbD^\eta\scrMsl^0$, 

(b) $(A^\fast,B^\fast,c_*,\kappa^\fast)$ satisfies UFEC if and only if
$(A^\fast,B^\fast,\bbD^\eta c_*,\kappa^\fast)$ does so.
\end{cor}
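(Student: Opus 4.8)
\textbf{Proof plan for Corollary \ref{co:AssumpTilt}.}
The plan is to reduce everything to the elementary observation in \eqref{eq:TiltRelBoltz}, namely that tilting a detailed-balance reaction system merely replaces the equilibrium $c_*$ by $\bbD^\eta c_*$ while leaving the stoichiometry and the reaction coefficients untouched. First I would record that the fast stoichiometric subspace $\Gamma_\fast=\mathOP{span}\bigset{\gamma^r}{r\in R_\fast}$ depends only on $A^\fast{-}B^\fast$, hence is unchanged by the tilt; consequently the operator $Q_\fast$ and the decomposition $\bfC={\larger\cup}_\sfq \bfC^\fast_\sfq$ are the same for all $\eta$, and in particular $\Qspace$ does not depend on $\eta$. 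This makes the statement a purely pointwise comparison of the two equilibrium sets over a fixed index set of slices.

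For part (a), the key is the characterization in \eqref{eq:def.scrEfast}: $c\in\scrEfa^0$ iff $c^{\alpha^r}/c_*^{\alpha^r}=c^{\beta^r}/c_*^{\beta^r}$ for all $r\in R_\fast$. Writing $d:=\bbD^\eta c_*$, so that $d_i=\ee^{\eta_i}c^*_i$, and setting $\wt c:=\bbD^\eta c$, I would compute $\wt c^{\,\alpha^r}/d^{\alpha^r}=\prod_i(\ee^{\eta_i}c_i)^{\alpha^r_i}/\prod_i(\ee^{\eta_i}c^*_i)^{\alpha^r_i}=c^{\alpha^r}/c_*^{\alpha^r}$, where the factors $\ee^{\eta_i\alpha^r_i}$ cancel exactly, and likewise for $\beta^r$. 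Hence the defining relations for membership in $\scrEfa^\eta$ at the point $\wt c$ coincide with those for membership in $\scrEfa^0$ at $c$, which gives $\scrEfa^\eta=\bbD^\eta\scrEfa^0$. For the slow manifolds one argues similarly via Proposition \ref{prop:InterEquil}: $\Psi^\eta(\sfq)$ is the unique minimizer of $\cE^\eta(\cdot)=\calH(\cdot\mathop{|}d)$ on $\bfC^\fast_\sfq$, and since $\bbD^\eta$ is a diffeomorphism of $\bfC$ onto itself, it is straightforward to check that $c\mapsto\bbD^\eta c$ preserves the interior-equilibrium property established there; alternatively one observes that $\scrMsl^\eta\subseteq\scrEfa^\eta$ with equality exactly of the interior equilibria, so $\scrMsl^\eta=\bbD^\eta\scrMsl^0$ follows from the first identity together with the fact that $\bbD^\eta$ maps $\pl\bfC^\fast_\sfq$ to $\pl\bfC^\fast_\sfq$ (because it acts coordinatewise and preserves which coordinates vanish).

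Part (b) is then immediate: since $\bbD^\eta:\bfC\to\bfC$ is a bijection that maps $\bfC^\fast_\sfq$ onto $\bfC^\fast_\sfq$ for every $\sfq\in\Qspace$ (the slices are $\bbD^\eta$-invariant because $Q_\fast$ is unchanged, or rather because $\bbD^\eta$ permutes the slices while $\Qspace$ itself is fixed), the counting identity $\#(\bfC^\fast_\sfq\cap\scrEfa^\eta)=\#(\bfC^\fast_\sfq\cap\bbD^\eta\scrEfa^0)=\#(\bfC^\fast_\sfq\cap\scrEfa^0)$ holds for all $\sfq$, so UFEC holds for the tilted system iff it holds for the untilted one. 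I do not expect a genuine obstacle here; the only point requiring a little care is the behaviour of $\bbD^\eta$ on the slices $\bfC^\fast_\sfq$ — one must check that $\bbD^\eta$ does not move a point out of its slice, which is true precisely because $\ker Q_\fast=\Gamma_\fast$ is unchanged, but in general $\bbD^\eta$ does \emph{not} fix each slice and merely maps the collection of slices to itself; for the counting argument one only needs that it restricts to a bijection $\bfC^\fast_\sfq\cap\scrEfa^\eta\to\bfC^\fast_{\sfq'}\cap\scrEfa^0$ for the appropriately shifted $\sfq'$, and since $\sfq\mapsto\sfq'$ ranges over all of $\Qspace$, the cardinalities match slicewise after this reindexing. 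Making this bookkeeping precise is the main thing to get right.
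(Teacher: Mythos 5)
Your treatment of the first identity in (a) is fine and coincides with the paper's: the cancellation $\bbD^\eta c^{\,\alpha^r}/(\bbD^\eta c_*)^{\alpha^r}=c^{\alpha^r}/c_*^{\alpha^r}$ shows $\scrEfa^\eta=\bbD^\eta\scrEfa^0$. The gap lies in the geometric claim on which both your treatment of $\scrMsl$ and your whole argument for (b) rest, namely that $\bbD^\eta$ maps the slices $\bfC^\fast_\sfq$ (respectively their boundaries $\pl\bfC^\fast_\sfq$, respectively ``the collection of slices'') to slices. This is false in general: the slices are the sets $(c+\Gamma_\fast)\cap\bfC$, and the diagonal map $\bbD^\eta$ sends $c+\Gamma_\fast$ to $\bbD^\eta c+\bbD^\eta\Gamma_\fast$, where $\bbD^\eta\Gamma_\fast\neq\Gamma_\fast$ unless $\Gamma_\fast$ happens to be invariant under the scaling. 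For instance, with the fast reaction $X_1{+}X_2\rightleftharpoons 2X_3$ one has $\Gamma_\fast=\mathOP{span}\{(1,1,-2)^\top\}$, and $\eta=(1,0,0)^\top$ gives $\bbD^\eta(1,1,-2)^\top=(\ee,1,-2)^\top\notin\Gamma_\fast$. Consequently $\bbD^{-\eta}\bfC^\fast_\sfq$ is not of the form $\bfC^\fast_{\sfq'}$ for any $\sfq'$, so the reindexing $\sfq\mapsto\sfq'$ you invoke for the counting identity in (b) does not exist, and likewise the ``alternative'' argument for $\scrMsl^\eta=\bbD^\eta\scrMsl^0$ via preservation of $\pl\bfC^\fast_\sfq$ breaks down. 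Your first suggestion for $\scrMsl$ (transporting the minimization of $\cE^\eta$ on $\bfC^\fast_\sfq$ by the diffeomorphism $\bbD^\eta$) has the same problem: the substitution $c=\bbD^{-\eta}\wt c$ moves the constraint set off the slice foliation, and $\cE(\bbD^{-\eta}\wt c)$ is not $\calH(\wt c\mathop{|}\bbD^\eta c_*)$ up to a constant, so the minimizers do not correspond in the way you assume.

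The paper avoids slices altogether. For the second identity in (a) it uses that $\D\cE^\eta(\bbD^\eta c)=\D\cE(c)$ for $c\in\bfC_+$, so by Lemma \ref{le:DEGamFast} the characterization $\D\cE(c)\in\Gamma_\fast^\perp$ of $\scrMsl\cap\bfC_+$ is transported pointwise (note $\bbD^\eta c$ is then the distinguished equilibrium of \emph{its own} slice, whichever that is); boundary points are handled by the continuity of $\Psi^0$ and $\Psi^\eta$ from Proposition \ref{prop:InterEquil}, and invertibility of $\bbD^\eta$ gives equality. For (b) it uses the equivalence \eqref{eq:Mslow=graphPsi}: UFEC is exactly the statement $\scrEfa=\scrMsl$ (since $\scrMsl$ meets every slice in exactly one point by construction of $\Psi$), so (b) follows from (a) and the bijectivity of $\bbD^\eta$ with no counting over slices at all. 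If you replace your slice bookkeeping by these two ingredients, your proposal becomes correct and essentially coincides with the paper's proof.
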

\begin{proof} By Lemma \ref{le:Equilibria} the equilibria $c \in \scrEfa^0$ are
  given by the condition
  $\tfrac{c^{\alpha^r}}{c_*^{\alpha^r}} = \tfrac{c^{\beta^r}}{c_*^{\beta^r}}$
  for all $r\in R_\fast$.  However, changing $c$ and $c_*$ into $\bbD^\eta c$
  and $\bbD^\eta c_*$, respectively, shows that the condition remains the same.

Moreover, for $c\in \bfC_+$ we have $\D\cE^\eta(\bbD^\eta c) =\D \cE(c) $ by
construction. Since $c \in \bfC_+\cap \scrMsl^0$ is equivalent to
$\D \cE(c) \in \Gamma_\fast^\perp$ we have $\bbD^\eta c \in \scrMsl^\eta$.  By
the continuity of $\Psi=\Psi^0$ and $\Psi^\eta$ (see Proposition \ref{prop:InterEquil})
we conclude $\bbD^\eta \scrMsl^0 \subset \scrMsl^\eta$. As $\bbD^\eta$ is
invertible, we can revert the argument and arrive at
$\bbD^\eta \scrMsl^0 = \scrMsl^\eta$.  Thus, (a) is established.

With (a) we see that  $\scrEfa^0= \scrMsl^0$ is equivalent to 
$ \scrEfa^\eta = \scrMsl^\eta$, and (b) is established as well. 
\end{proof}

\subsection{Examples and problems without the UFEC}
\label{su:noUFEC}

In the following two remarks, we firstly provide a few examples where UFEC does
not hold and secondly show that our main result in Theorem \ref{thm:GammaCvg}
fails without UFEC.

\begin{rem}[Examples without UFEC]\label{re:Exa.noUEC}
  The simplest example of a RRE not satisfying the UFEC condition is the
  autocatalytic reaction $2X \rightleftharpoons X$, leading to the RRE
  $\dot c = \frac1\eps (c-c^2)$, where $\Gamma=\Gamma_\fast =\R$ and
  $m=m_\fast=0$. In particular, the fast stoichiometric subset
  $\bfC = \bfC_{\mathsf 0}^\fast={[0,\infty[}$ contains the the interior
  equilibrium $c_*=1$ and the boundary equilibrium $c=0$.

Next, we consider two different fast systems for two species, the first with
the single autocatalytic  
reaction $X_{1}+X_{2}\leftrightharpoons 2X_{1}$ and the second with 
two non-autocatalytic reactions $2X_1 \leftrightharpoons X_2$ and 
$X_1 \leftrightharpoons 2X_2$. The fast RREs read  
\begin{figure}
\begin{center}
\begin{tikzpicture}
\draw[thick, ->] (-0.5,0)--(3.3,0) node[right]{$c_1$};
\draw[thick, ->] (0,-0.5)--(0,3.3) node[right]{$c_2$};
\draw[ultra thick, color=green] (0,0)--(2.5,2.5) 
  node[above,color=black]{$\scrMsl^{(1)}$};
\draw[ultra thick, color=blue] (0,0)--
  node[pos=0.7,left,color=black]{$\scrEfa^{(1)}$}  (0,3.1) ;

\foreach \i in {1,2,3}
   \draw[ thick, color=red] (0,\i)--(\i,0);
\draw[ thick, color=red, ->   ] (0,3)--(1,2);
 \draw[ thick, color=red, ->  ] (0,2)--(.5,1.5);
\draw[ thick, color=red, ->  ] (3,0)--(2,1);
 \draw[ thick, color=red, -> ] (2,0)--(1.5,.5);
\end{tikzpicture}
\qquad \qquad
\begin{tikzpicture}
\draw[thick, ->] (-0.05,0)--(3.3,0) node[right]{$c_1$};
\draw[thick, ->] (0,-0.5)--(0,3.3) node[right]{$c_2$};
\fill  [color=green] (2,2) circle (0.1)
  node[right,color=black]{\qquad $\scrMsl^{(2)}$};
\fill[color=blue] (0,0) circle (0.1)
  node[left,color=black]{$\scrEfa^{(2)}$} ;
\node at (1.5,1.5) {
\includegraphics[width=3cm,trim = 40 40 40 40, clip=true]
                                  {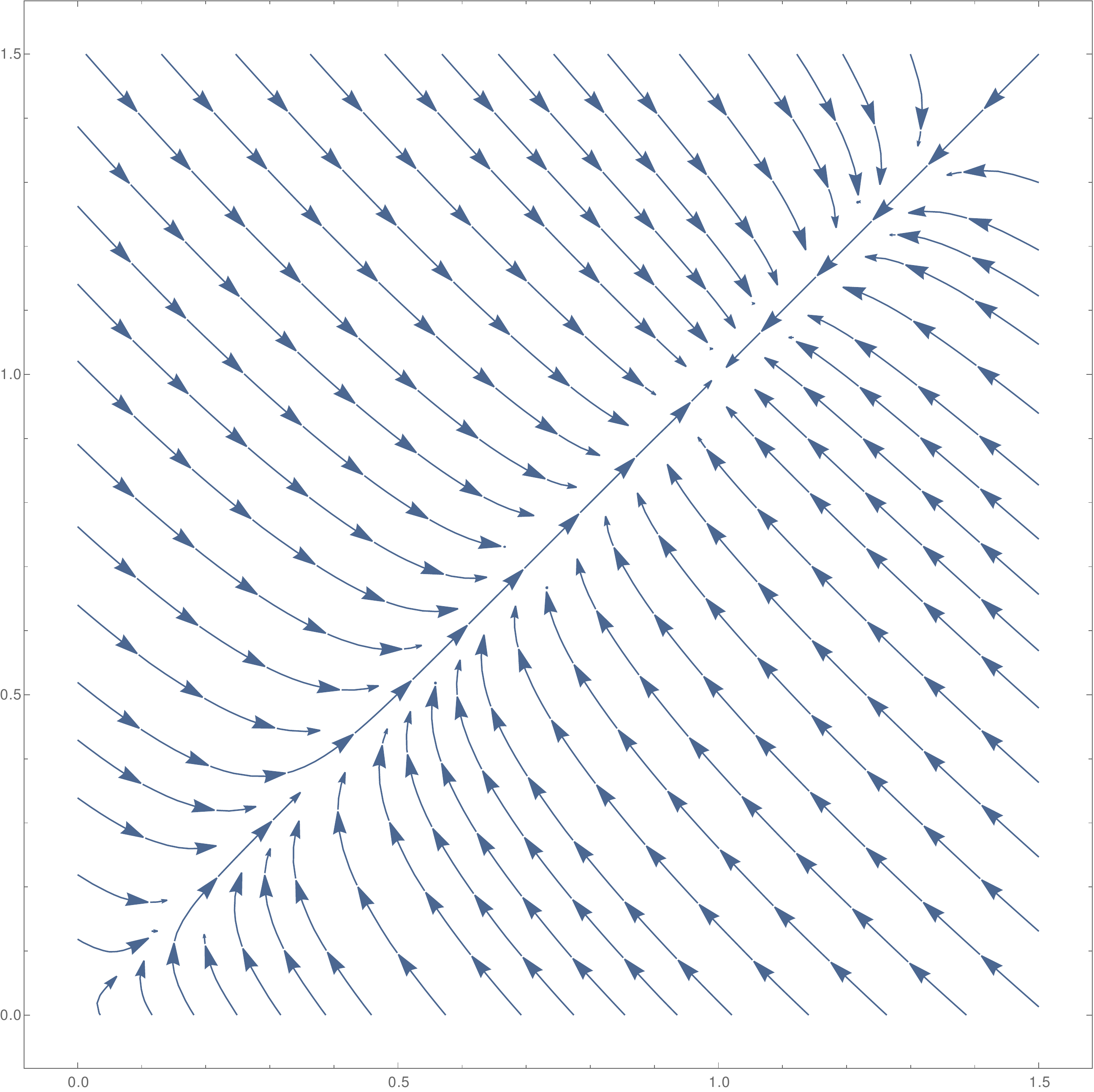}};
\end{tikzpicture}\vspace{-1.5em}
\end{center}
\caption{The slow manifolds  $\scrMsl^{(k)}$ (green) are strictly
  contained in $\scrEfa^{(k)}$ (blue and green). The blue points are
  unstable, while the green points are stable. In the case (1) the invariant
  sets $\bfC^\fast_\sfq$ (red) are one-dimensional, while in case (2)
  we have $\bfC^\fast_0=\bfC$.}
\label{fig:NotUEC} 
\end{figure}
\begin{align*}
c' = \bm R^{(1)}(c)=(c_1^2{-}c_1c_2)\binom{-1}{1}, \qquad 
c' = \bm R^{(2)}(c)=(c_1^2{-}c_2)\binom{-2}{1} + (c_1{-}c^2_2)\binom{-1}{2}. 
\end{align*}
The conserved quantities are given by the matrices 
\[
Q^{(1)}_\fast c = c_1+c_2\in \Qspace^{(1)}={[0,\infty[} \qquad \text{and} \qquad 
Q^{(2)}_\fast c = 0 \in  \Qspace^{(2)}=\{0\}.
\]
The functions $\Psi$ for the minimizers of $\cE$ over
$\bfC^\fast_\bfq$ are given by $\Psi^{(1)}(\sfq) = ( \sfq/2,
\sfq/2)^\top$ and $\Psi^{(2)}(0) = (1,1)^\top$ leading to 
\[
\scrMsl^{(1)} = \bigset{(z,z)^\top}{ z\geq 0} \quad \text{and} \quad
\scrMsl^{(2)} = \{ (1,1)^\top\} \,. 
\]
However, the set of fast equilibria is bigger in both cases:
\[
\scrEfa^{(1)} = \scrMsl^{(1)} \,\dot\cup\, \bigset{(0,z)}{z\geq 0}  
\quad \text{and} \quad \scrEfa^{(2)} = \scrMsl^{(2)} 
  \,\dot\cup\, \{ (0,0)^\top\}.
\]
Figure \ref{fig:NotUEC} displays the invariant sets
$\bfC^\fast_\sfq$, $\scrMsl$, and $\scrEfa$ for both cases.   
\end{rem}

To the knowledge of the authors there are currently no general
sufficient conditions  on the fast DBRS $(A^\fast,B^\fast,
c_*,\kappa^\fast)$ available that guarantee the validity of the 
UFEC. However, in many applications the number $\#(R_\fast)$ of fast
reactions is rather small such that an analysis of the fast RRE is
easily done.

The next remark shows that Theorem \ref{thm:GammaCvg} does
not hold if the UFEC in \eqref{eq:UFEC}  does not hold.

\begin{rem}[A counterexample with jumps]
\label{re:Counterexa.fD} We return to the first RRE $\dot c =
\frac1\eps(c-c^2)$ of the previous remark.  The associated
dissipation potential and slope functions are 
\[
\cR^*_\eps(c,\xi)= \frac{c^{3/2}}\eps\: \mathsf{C^*}(\xi) \quad
\text{and} \quad \calS_\eps(c) = \frac2\eps\:c\,\big(c^{1/2}{-}1\big)^2\,.
\] 
Moreover, the dissipation functional $\fD_\eps$ takes the form
\[
\fD_\eps(c) = \int_0^T\!\Big\{ \frac{c^{3/2}}\eps\, \sfC  
\Big( \frac{\eps \dot c}{c^{3/2}}\Big) + \frac{2c}\eps \big(c^{1/2}{-}1\big)^2
\Big) \Big\} \dd t .
\]
Note that $c\equiv 0$ and $c\equiv 1$ yield $\fD_\eps(c)=0$. Moreover,
fixing $t_*\in {]0,T[}$
the trajectories 
$\wt c{}^\eps(t)= \ee^{(t{-}t_*)/\eps}/(1{+}\ee^{(t{-}t_*)/\eps})$ are exact
solutions of the RRE $\dot c=\frac1\eps (c{-}c^2)$, hence the
energy-dissipation principle gives $\fD_\eps(\wt c{}^\eps)= \cE(\wt 
c{}^\eps(0))- \cE(\wt c{}^\eps(T))\leq \cE(0)-\cE(1)=1$. Thus, the
limit function $\wt c{}^0$ with $\wh c{}^0(t)=0$ for $t<t_*$ and $\wh
c{}^0(t)=1$ for $t>t_*$ is not continuous but must satisfy $\fD_0(\wt c{}^0)\leq 1$,
which is in contradiction to Theorem \ref{thm:GammaCvg}. 

Indeed, using the Modica-Mortola approach as described in
\cite[Sec.\,6]{Brai02GCB} (involving the estimate
$\cR_\eps(c,\dot c)+ \cR^*_\eps(c,{-}\D\cE(c)) \geq -\D\cE(c)\dot c$) it can be
shown that $\fD_\eps \Gammlim \fD_0$ in $\L^1([0,T];\R)$, where $\fD_0$ is
finite only on piecewise constant functions taking values in $\{0,1\}$
only. Moreover, for these functions $\fD_0(c)$ equals the number of jumps times
$\cE(0)-\cE(1)=1$. The same was also observed in \cite{Step19EGCGS}.
\end{rem}

\section{The effective GS and the limiting equation}
\label{se:EffGS}

Here we present two different ways to derive the the limiting
equation from our effective gradient system. The first one is in line
with the coarse-graining approach developed in \cite{MieSte19?CGED},
where a lower-dimensional system is derived for the coarse-grained
variable $\sfq = Q_\fast c$ and the restriction $c=\Psi(\sfq)$ is
built into the model. The second one follows \cite{Both03ILRC} and
\cite[Thm.\,4.5]{DiLiZi18EGCG}, where the variable $c$ is maintained and the
constraint $c \in \scrMsl$ is realized by a suitable projection.


In both cases we start from the $\Gamma$-limit $\fD_0$ of the dissipation
functionals $\fD_\eps$. Combining the $\Gamma$-convergence of $\fD_\eps$
and an assumption of well-preparedness of the initial data, we can take the
limit in the energy-dissipation principle to find
\begin{equation}
  \label{eq:limitingEDB}
  \cE(c(T))+\fD_0(c;0,T)\leq\cE(c(0)) \quad \text{with } \ \fD_0(c;0,T)= 
\int_{0}^{T}\!\! \big\{ \cR_\eff(c,\dot{c})+\calS_0(c)\big\} \dd t.
\end{equation}
From this inequality we  recover the limiting evolution by Theorem
\ref{th:EDP}. 

Any solution satisfies the condition 
\[
\int_0^T \!\! \calS_0(c(t)) \dd t \leq \fD_0(c;0,T) \leq \cE(c(0))- \cE(c(T)) < \infty,
\] 
where by the UFEC the function $\calS_0$ assumes the value $+\infty$
for $c \not\in \scrMsl$. Hence, the continuity of $c$ implies that
$c(t) \in \scrMsl $ for all $t\in [0,T]$. Thus, setting
$\sfq(t):=Q_\fast c(t)$ and using the 
relation \eqref{eq:Psi.Relation}  we have $c(t) = \Psi(\sfq(t))$ for
all $t\in [0,T]$. We recall that the properties 
\[
c \in \rmC^0([0,T];\bfC) \qquad \text{and} \qquad 
\sfq= Q_\fast c \in \rmW^{1,1}([0,T];\R^{m_\fast})
\]
are consequences of Theorem \ref{thm:GammaCvg}.

\subsection{Coarse-graining approach}
\label{su:CoarseGrain} 

In this part we concentrate solely on the slow variables $\sfq$ and define 
\begin{equation}
  \label{eq:ReducedGS}
  \sfE(\sfq):= \cE(\Psi(\sfq)) \quad \text{and} \quad \sfR^*(\sfq, \zeta)
:=\cR^*_\slow(\Psi(\sfq) , Q_\fast^\top \zeta),
\end{equation}
which defines a reduced gradient system $(\Qspace,\sfE,\sfR)$ for the
coarse-grained state  $\sfq \in \Qspace \subset \R^{m_\fast}$. In
particular, $\sfR^*:\Qspace\ti \R^{m_\fast}\to [0,\infty]$ is a  well-defined dual
dissipation potential as $Q_\fast^\top : \R^{m_\fast} \to \R^{i_*}$. 

The main result of this subsection will be that the gradient-flow
equation for the reduced gradient system $(\Qspace,\sfE,\sfR)$ is indeed
the limiting equation and it has a simple representation in terms of
$\bm R_\slow$, $Q_\fast$, and $\Psi$:
\begin{equation}
  \label{eq:reducedRRE}
  \dot \sfq\  = \ \pl_\zeta \sfR^*(\sfq, -\D\sfE(\sfq)) \ = \ 
Q_\fast \bm R_\slow(\Psi(q)) . 
\end{equation}
Thus, $(\Qspace,\sfE,\sfR)$ provides an
exact nonlinear coarse-graining in the sense of
\cite[Sec.\,6.1]{MaaMie20?MCRS}, where the relation $I_{m_\fast} =
Q_\fast \D \Psi(\sfq) $ simplifies the formula for $\sfR^*$ compared
to \cite[Eq.\,(6.2)]{MaaMie20?MCRS}.  
\begin{rem}
This theory is a nonlinear generalization of the coarse-graining
theory developed in \cite{MieSte19?CGED}, where
$\dot{\wh c}= MA_\slow N \wh c$ is the coarse-grained equation. In our
case the role of the reconstruction operator $N:\R^J\to \R^I$ is
played by the nonlinear mapping $\Psi: \Qspace \to \bfC$, while the role
of the coarse-graining operator $M:\R^I\to \R^J$ is our linear
operator $Q_\fast:\bfC \to \Qspace$. 
\end{rem}

The following result provides first the justification of the second identity
in \eqref{eq:reducedRRE}, and then shows that this equation is indeed
the limiting equation obtained from the energy-dissipation principle
for $\cE$ and $\fD_0$. 

\begin{prop}[Reduced gradient structure]\label{pr:ReducedGS}
  Let the DBRS $(A,B,c_*,\wh\kappa^\eps)$ be given as in Section
  \ref{su:FastSlowRRE} and satisfy the UFEC \eqref{eq:UFEC}, and let
  $(\Qspace,\sfE,\sfR^*)$ be defined as above.  Then the following
  identities are valid:\medskip
  
(a) For $\sfq \in \mathOP{int}\Qspace$ we have
\begin{align}\label{eq:ProjectionAndIdentity}
  Q_\fast\D\Psi(\sfq)=I_{m_\fast},\mathrm{~~and,~}\quad
  Q^\top_\fast\D\Psi(\sfq)^\top \text{ is a projection onto } 
  \mathOP{im}(Q^\top_\fast)=\Gamma^\perp_\fast
\end{align}

(b) For $\sfq \in \mathOP{int}\Qspace$ we have $ \pl_\zeta \sfR^*(\sfq,
-\D\sfE(\sfq))  = Q_\fast \bm R_\slow(\Psi(q)) $;\medskip

(c) the primal dissipation potential $\sfR$ takes the form
\[
\sfR(\sfq, w)= \inf\bigset{\cR_\slow(\Psi(\sfq), v) }{ Q_\fast v= w} =
\cR_\eff(\Psi(\sfq), \wt v)  \text{ whenever }
Q_\fast \wt v=w\,;
\]

(d) For $\sfq \in \mathOP{int}\Qspace $ we have $
\sfR^*(\sfq,-\D\sfE(\sfq)) = \calS_\slow(\Psi(\sfq))=: 
\mathsf S (\sfq)$\,;\medskip

(e) $\fD_0(\Psi(\sfq)) = \int_0^T\!\big\{ \sfR(\sfq,\dot \sfq) {+}
\mathsf S(\sfq) \big\} \dd t \,$.
\end{prop}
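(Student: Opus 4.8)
The plan is to prove the five identities in Proposition \ref{pr:ReducedGS} more or less in the order stated, since each relies on the previous ones.

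\textbf{Part (a).} For $\sfq\in\mathOP{int}\Qspace$, Proposition \ref{prop:InterEquil} gives $\Psi(\sfq)\in\bfC_+$ and $\Psi$ analytic, and \eqref{eq:Psi.Relation} reads $Q_\fast\Psi(\sfq)=\sfq$. Differentiating this identity in $\sfq$ immediately yields $Q_\fast\D\Psi(\sfq)=I_{m_\fast}$. For the second claim, set $P:=Q_\fast^\top\D\Psi(\sfq)^\top$. Then $P^2 = Q_\fast^\top\big(\D\Psi(\sfq)^\top Q_\fast^\top\big)\D\Psi(\sfq)^\top = Q_\fast^\top\big(Q_\fast\D\Psi(\sfq)\big)^\top\D\Psi(\sfq)^\top = Q_\fast^\top I_{m_\fast}\D\Psi(\sfq)^\top = P$, so $P$ is a projection; its image is $\mathOP{im}(Q_\fast^\top)=\Gamma_\fast^\perp$ by \eqref{eq:Def.Qfast} and the fact that $\D\Psi(\sfq)^\top$ is surjective onto $\R^{m_\fast}$ (again from $Q_\fast\D\Psi(\sfq)=I$).

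\textbf{Parts (b) and (d).} Here I would use the chain rule $\D\sfE(\sfq) = \D\Psi(\sfq)^\top\D\cE(\Psi(\sfq))$ together with the definition $\sfR^*(\sfq,\zeta)=\cR^*_\slow(\Psi(\sfq),Q_\fast^\top\zeta)$. Thus $\pl_\zeta\sfR^*(\sfq,-\D\sfE(\sfq)) = Q_\fast\,\pl_\xi\cR^*_\slow\big(\Psi(\sfq),-Q_\fast^\top\D\Psi(\sfq)^\top\D\cE(\Psi(\sfq))\big) = Q_\fast\,\pl_\xi\cR^*_\slow\big(\Psi(\sfq),-P\,\D\cE(\Psi(\sfq))\big)$ with $P$ the projection from (a). The key point is that $\cR^*_\slow(c,\cdot)$ depends on $\xi$ only through the scalars $\gamma^r\!\cdot\xi$ for $r\in R_\slow$, but these need not be unaffected by $P$ — so instead I argue differently: by Lemma \ref{le:DEGamFast}, since $\Psi(\sfq)\in\scrMsl\cap\bfC_+$, we have $\D\cE(\Psi(\sfq))\in\Gamma_\fast^\perp=\mathOP{im}P$, hence $P\,\D\cE(\Psi(\sfq))=\D\cE(\Psi(\sfq))$. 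Therefore $\pl_\zeta\sfR^*(\sfq,-\D\sfE(\sfq)) = Q_\fast\,\pl_\xi\cR^*_\slow(\Psi(\sfq),-\D\cE(\Psi(\sfq))) = Q_\fast\bm R_\slow(\Psi(\sfq))$, where the last equality is exactly Proposition \ref{pr:GS.cosh} applied to the slow part only (the computation in Remark \ref{re:SeveralGS} with $\Phi_r=\sfC^*$ works reaction-by-reaction). This proves (b). For (d), evaluate the scalar relation $\sfR^*(\sfq,-\D\sfE(\sfq)) = \cR^*_\slow(\Psi(\sfq),-Q_\fast^\top\D\Psi(\sfq)^\top\D\cE(\Psi(\sfq))) = \cR^*_\slow(\Psi(\sfq),-\D\cE(\Psi(\sfq)))$ using $P\,\D\cE(\Psi(\sfq))=\D\cE(\Psi(\sfq))$ once more, and this equals $\calS_\slow(\Psi(\sfq))$ by the identity $\cR^*(c,-\D\cE(c))=\calS(c)$ on $\bfC_+$ stated just after \eqref{eq:SlopeFcn}, restricted to the slow reactions.

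\textbf{Parts (c) and (e).} Part (c) follows from the Legendre--Fenchel transform and the theorem on infimal convolution: $\sfR(\sfq,w) = \sup_\zeta\{w\!\cdot\!\zeta - \cR^*_\slow(\Psi(\sfq),Q_\fast^\top\zeta)\}$. Substituting $\xi=Q_\fast^\top\zeta$, and noting $w\!\cdot\!\zeta = w\!\cdot\!\zeta$ can be written via any $v$ with $Q_\fast v=w$ as $v\!\cdot\!Q_\fast^\top\zeta = v\!\cdot\!\xi$, one gets $\sfR(\sfq,w)\ge\sup_{\xi\in\Gamma_\fast^\perp}\{v\!\cdot\!\xi-\cR^*_\slow(\Psi(\sfq),\xi)\}$ for such $v$; the dual characterization $\chi_{\Gamma_\fast^\perp}=(\chi_{\Gamma_\fast})^*$ and \eqref{eq:Eff.cR} then identify the right-hand side with $\cR_\eff(\Psi(\sfq),v) = \inf_{v_2\in\Gamma_\fast}\cR_\slow(\Psi(\sfq),v-v_2)$, and this value is independent of the choice of $v$ with $Q_\fast v=w$ because $\ker Q_\fast=\Gamma_\fast$ (changing $v$ by an element of $\Gamma_\fast$ is absorbed into $v_2$). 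Matching this with $\inf\{\cR_\slow(\Psi(\sfq),v):Q_\fast v=w\}$ — which is the same infimal convolution written out — gives (c). Finally, (e) is just the assembly step: for a curve $c=\Psi(\sfq)$ with $\sfq\in\W^{1,1}$, the chain rule gives $\dot c=\D\Psi(\sfq)\dot\sfq$, hence $Q_\fast\dot c = \dot\sfq$ by (a), so $\cR_\eff(\Psi(\sfq),\dot c)=\sfR(\sfq,\dot\sfq)$ by (c) (with $v=\dot c$, $w=\dot\sfq$), and $\calS_0(\Psi(\sfq))=\calS_\slow(\Psi(\sfq))=\mathsf S(\sfq)$ by (d) together with $\Psi(\sfq)\in\scrMsl=\scrEfa$ so $\chi_{\scrEfa}(\Psi(\sfq))=0$; integrating over $[0,T]$ and using the definition of $\fD_0$ from Theorem \ref{thm:GammaCvg} yields the claimed formula.

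\textbf{Main obstacle.} The one genuinely delicate point is the interchange of the projection $P=Q_\fast^\top\D\Psi(\sfq)^\top$ with $\D\cE$: the whole argument for (b) and (d) hinges on the observation that $\D\cE(\Psi(\sfq))\in\Gamma_\fast^\perp$, which is precisely the content of Lemma \ref{le:DEGamFast} and uses crucially that $\Psi(\sfq)$ is the \emph{thermodynamic} equilibrium (the minimizer of $\cE$), not just any fast equilibrium. A secondary care point is handling the boundary of $\Qspace$: parts (b) and (d) are only asserted for $\sfq\in\mathOP{int}\Qspace$, where $\Psi(\sfq)\in\bfC_+$ and $\D\cE(\Psi(\sfq))$ is finite; on $\pl\Qspace$ one would need the extension/continuity arguments, but the statement as given sidesteps this.
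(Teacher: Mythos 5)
Your parts (a)--(d) are correct and follow essentially the paper's own argument: differentiate $Q_\fast\Psi(\sfq)=\sfq$ to get (a); combine the chain rule $\D\sfE(\sfq)=\D\Psi(\sfq)^\top\D\cE(\Psi(\sfq))$ with Lemma \ref{le:DEGamFast} and the projection from (a) to replace $Q_\fast^\top\D\Psi(\sfq)^\top\D\cE(\Psi(\sfq))$ by $\D\cE(\Psi(\sfq))$ in (b) and (d); and in (c) identify $\sfR$ with the constrained infimum via Legendre--Fenchel duality (you compute $(\sfR^*)^*$ directly and match it with $\cR_\eff$ through \eqref{eq:Eff.cR}, whereas the paper verifies that the Legendre transform of the inf-formula is $\sfR^*$ --- equivalent computations).

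The one step that does not hold as written is in (e): you invoke the chain rule $\dot c=\D\Psi(\sfq)\dot\sfq$ for $c=\Psi(\sfq)$, but $\Psi$ is only continuous on $\Qspace$ and differentiable on $\mathOP{int}\Qspace$ (Proposition \ref{prop:InterEquil}); when $\sfq(t)$ touches $\partial\Qspace$ the curve $t\mapsto\Psi(\sfq(t))$ need not be differentiable, nor even lie in $\W^{1,1}$, and (e) is asserted without an interior restriction --- the paper explicitly flags this point in the remark following the proposition. The repair needs nothing beyond what you already proved: by Proposition \ref{pr:EffDissPot} (equivalently your part (c)) the velocity enters $\cR_\eff$ only through $Q_\fast\dot c$, and by \eqref{eq:Psi.Relation} one has $Q_\fast\Psi(\sfq(t))=\sfq(t)$ identically, so $\tfrac{\mathrm{d}}{\mathrm{d}t}\,Q_\fast c=\dot\sfq$ a.e.\ without ever differentiating $\Psi$; hence $\cR_\eff(\Psi(\sfq),\cdot)$ evaluated on the velocity equals $\sfR(\sfq,\dot\sfq)$ even when $\sfq(t)$ reaches the boundary of $\Qspace$, and the slope part is handled exactly as you say. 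With this substitution your proof of (e) is complete.
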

In part (e) it is crucial to observe that for differentiable
$t\mapsto \sfq(t)$ we cannot guarantee that $t\mapsto c(t)=\Psi(\sfq(t))$ is
differentiable as well, since $\sfq(t)$ need not remain in the interior of $\Qspace$. However, for $\fD_0$ we only need continuity of $c$ and
the differentiability of $t\mapsto Q_\fast c(t)=\sfq(t)$, where we used
$\sfq=Q_\fast \Psi(\sfq)$, see \eqref{eq:Psi.Relation}.
\begin{proof} 
For part (a), we use that $\Psi$ is differentiable in 
$\mathOP{int}\Qspace$. Differentiating the relation $Q_\fast\Psi(\sfq)=\sfq$  yields
$Q_\fast \D \Psi(\sfq)= I_{m_\fast}$. In particular, this implies that $\D\Psi(\sfq)Q_\fast$ is a projection, and hence also its transpose $Q^\top_\fast\D\Psi(\sfq)^\top$.

To show part (b) we first use the chain rule
$\D \sfE(\sfq) = \D\Psi(\sfq)^\top \D\cE(\Psi(\sfq))$. With Lemma
\ref{le:DEGamFast} and part (a) we have that
$\D\cE(\Psi(\sfq))=Q_\fast^\top \D\Psi(\sfq)^\top \D\cE(\Psi(\sfq))$, which
yields
\begin{align*}
\pl_\zeta \sfR^*(\sfq, {-}\D\sfE(\sfq))&= Q_\fast^\top \D_\xi
\cR^*_\slow\big(\Psi(\sfq), {-}Q_\fast^\top \D\Psi(\sfq)^\top
\D\cE(\Psi(\sfq))\big)\\
 &=  Q_\fast^\top \D_\xi
\cR^*_\slow\big(\Psi(\sfq), {-}\D\cE(\Psi(\sfq))\big)
\  = \ Q_\fast \bm
R_\slow(\Psi(\sfq)).   
\end{align*}

For (c) we establish the relation $\sfR^*=\calL \sfR$ via the
Legendre transformation $\calL$:
\begin{align*}
 \big(\calL \sfR(q,\cdot)\big)(\zeta) 
&= \sup \bigset{\zeta\cdot w - \sfR(\sfq, w)}{ w \in \R^{m_\fast}}
\\
&=  \sup \Bigset{\zeta\cdot w + \sup\bigset{-\cR_\slow 
 (\Psi(\sfq), v)}{Q_\fast v=w} }{ w \in \R^{m_\fast}} \\
&= \sup \bigset{\zeta\cdot w -\cR_\slow(\Psi(\sfq), v) }{ Q_\fast v=w} \\
&=   
\sup \bigset{\zeta\cdot Q_\fast v -\cR_\slow(\Psi(\sfq), v) }{ v\in \R^{i_*}} \ = \ 
\cR^*_\slow(\Psi(\sfq), Q^\top_\fast \zeta)\ = \ \sfR^*(\sfq,\zeta).  
\end{align*}

Part (d) follows similarly as part (b) by inserting
$\D \sfE(\sfq) = \D\Psi(\sfq)^\top \D\cE(\Psi(\sfq))$ and
$\D\cE(\Psi(\sfq)) = Q_\fast^\top \D\Psi(\sfq)^\top
\D\cE(\Psi(\sfq))$ into the definition of $\sfR^*$
via $\cR^*_\slow$.  

For part (e) we first observe that $\cS_\slow(\Psi(\sfq))=\mathsf S(\sfq)$ for
all $q\in \Qspace$ by definition. For the rate part $\cR_\eff(c,\dot c)$ part
(c) established that the dependence on $\dot c$ is only through
$Q_\fast \dot c$. But relation \eqref{eq:Psi.Relation} gives
$\frac\rmd{\rmd t} Q_\fast \Psi(\sfq(t))= \dot\sfq(t)$, and the relation
$\cR_\eff(c,\dot c) = \cR_\eff (\Psi(\sfq), \D\Psi(\sfq)\dot\sfq)=
\sfR(\sfq,\dot\sfq)$ holds even $q(t)$ touching the boundary of $\Qspace$.
\end{proof}

The next result shows that the reduced gradient-flow equation
\eqref{eq:reducedRRE} indeed is the limiting equation for the
fast-slow RRE \eqref{eq:RRE.fs} in the sense that for solutions
$c^\eps:[0,T]\to \bfC$ any accumulation point $\sfq:[0,T]\to \Qspace$ of
the family $\big(Q_\fast c^\eps\big)$ solves indeed
\eqref{eq:reducedRRE}. The assumptions on the initial conditions
$c^\eps(0)$ are special to avoid a potential jump at $t=0$, see
Section \ref{su:FastSlowRRE}.  The proof is based on the
energy-dissipation principle and follows
\cite[Thm.\,3.3.3]{Miel16EGCG} or \cite[Lem.\,2.8]{MiMoPe18?EFED} with
some special care because of the degeneracies and singularities of the
limiting problem.

\begin{prop}[Reduced limiting equation]\label{pr:LimitEqn.q} 
  Consider a fast-slow DBRS $(A,B,c_*,\wh\kappa^\eps)$ satisfying the
  UFEC \eqref{eq:UFEC} and let $c^\eps:[0,T]\to \R^{i_*}$ be a family
  of solutions of the fast-slow RRE \eqref{eq:RRE.fs}. If along a
  subsequence (not relabeled) we have $ c^\eps\to c^0$ in
  $\rmL^1([0,T];\bfC)$ and $ c^\eps(0)\to \ol c_0 \in \scrMsl $, then
  $ Q_\fast c^\eps \to \sfq:=Q_\fast c^0 $ weakly in
  $\rmW^{1,1}([0,T];\Qspace)$ and strongly in $\rmC^0([0,T];\Qspace)$, and
  $\sfq$ solves the reduced gradient-flow equation
  \eqref{eq:reducedRRE} with initial condition
  $\sfq(0)=Q_\fast \ol c_0$.
\end{prop}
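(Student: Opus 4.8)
The plan is to run the standard energy-dissipation-principle argument for limits of gradient flows (as in \cite[Thm.\,3.3.3]{Miel16EGCG} or \cite[Lem.\,2.8]{MiMoPe18?EFED}), feeding it the Mosco convergence $\fD_\eps\MoscoLimE\fD_0$ from Theorem \ref{thm:GammaCvg} and the reduced-system identities of Proposition \ref{pr:ReducedGS}. First I would note that, since $c^\eps$ solves the fast-slow RRE \eqref{eq:RRE.fs}, Proposition \ref{pr:GS.cosh} (together with the energy-dissipation principle) gives the Energy-Dissipation-Balance $\cE(c^\eps(t))+\fD_\eps(c^\eps;0,t)=\cE(c^\eps(0))$ for every $t\in[0,T]$. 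The hypothesis $c^\eps(0)\to\ol c_0$ and continuity of $\cE$ yield $\cE(c^\eps(0))\to\cE(\ol c_0)<\infty$; with $\cE\ge0$ and $\cR_\eps,\calS_\eps\ge0$ this gives the uniform dissipation bound $\sup_{\eps}\fD_\eps(c^\eps)=:M<\infty$, and, since $t\mapsto\cE(c^\eps(t))$ is non-increasing along solutions, also the bounded-energy bound $\sup_{\eps}\mathrm{ess\,sup}_{t\in[0,T]}\cE(c^\eps(t))<\infty$ required for the $\Gamma$-convergence conditioned to bounded energies.

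Next I would invoke the compactness underlying Theorem \ref{thm:GammaCvg}: the bound $\fD_\eps(c^\eps)\le M$ controls $Q_\fast c^\eps$ uniformly in $\rmW^{1,1}([0,T];\R^{m_\fast})$ (point (2) of the introduction), while $\int_0^T\calS_\fast(c^\eps)\dd t\le M\eps\to0$ together with the UFEC (giving $\scrEfa=\scrMsl$) and the continuity of $\Psi$ (Proposition \ref{prop:InterEquil}) forces $c^\eps\to\Psi(\sfq)$ in $\rmL^1([0,T];\bfC)$, where $\sfq\in\rmW^{1,1}([0,T];\Qspace)$ is a subsequential limit of $Q_\fast c^\eps$, weakly in $\rmW^{1,1}$ and strongly in $\rmC^0$. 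Since $c^\eps\to c^0$ in $\rmL^1$ by hypothesis, the limit is pinned: $c^0=\Psi(\sfq)$ a.e., and $\sfq=Q_\fast\Psi(\sfq)=Q_\fast c^0$ by \eqref{eq:Psi.Relation}, so every further subsequence produces the same $\sfq$ and the whole (sub)sequence converges with the stated modes. Evaluating the uniform convergence at $t=0$ and using $Q_\fast c^\eps(0)\to Q_\fast\ol c_0$ gives $\sfq(0)=Q_\fast\ol c_0$, whence $c^0(0)=\Psi(\sfq(0))=\ol c_0$ because $\ol c_0\in\scrMsl$.

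Then I would pass to the liminf in the EDB. For a.e.\ $t\in(0,T]$ one has $c^\eps(t)\to\Psi(\sfq(t))$: along a further subsequence $\calS_\fast(c^\eps(t))\to0$, and at such $t$ any accumulation point of the bounded sequence $c^\eps(t)$ lies in $\{\calS_\fast=0\}=\scrEfa$ and has $Q_\fast$-image $\sfq(t)$, hence equals $\Psi(\sfq(t))$ by the UFEC. Thus $\cE(c^\eps(t))\to\cE(\Psi(\sfq(t)))$ by continuity of $\cE$, and $\liminf_\eps\fD_\eps(c^\eps;0,t)\ge\fD_0(c^0;0,t)$ by the $\Gamma$-liminf estimate (valid thanks to the bounded-energy bound); taking the liminf of the EDB on $[0,t]$ gives $\cE(c^0(t))+\fD_0(c^0;0,t)\le\cE(\ol c_0)$ for a.e.\ $t$, hence for all $t\in[0,T]$ by continuity of both sides. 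Using $c^0=\Psi(\sfq)$, $\sfE=\cE\circ\Psi$ and Proposition \ref{pr:ReducedGS}(d),(e) (the slope function giving the correct meaning of $\sfR^*(\sfq,{-}\D\sfE(\sfq))$ also at $\pl\Qspace$) this becomes the reduced upper energy-dissipation inequality
\[
\sfE(\sfq(T))+\int_0^T\!\big\{\sfR(\sfq,\dot\sfq)+\sfR^*(\sfq,{-}\D\sfE(\sfq))\big\}\dd t\ \le\ \sfE(\sfq(0)).
\]
The Fenchel--Young inequality $-\langle\D\sfE(\sfq),\dot\sfq\rangle\le\sfR(\sfq,\dot\sfq)+\sfR^*(\sfq,{-}\D\sfE(\sfq))$, integrated along the absolutely continuous curve $\sfq$ via the chain rule for $t\mapsto\sfE(\sfq(t))$, supplies the reverse inequality, so equality holds. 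This is the EDB for the reduced gradient system $(\Qspace,\sfE,\sfR)$, and the energy-dissipation principle (Theorem \ref{th:EDP}, in the form adapted to the degenerate $\sfR$) then yields $\dot\sfq\in\pl_\zeta\sfR^*(\sfq,{-}\D\sfE(\sfq))$, which by Proposition \ref{pr:ReducedGS}(b) equals $Q_\fast\bm R_\slow(\Psi(\sfq))$; together with $\sfq(0)=Q_\fast\ol c_0$ this is exactly \eqref{eq:reducedRRE}.

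The hard part will be accommodating the two pathologies of $\fD_0$ noted after Theorem \ref{thm:GammaCvg}: the non-coercivity of $\sfR(\sfq,\cdot)$ (only the $Q_\fast$-component of the velocity is seen) and the possibility that $\sfq$ touches $\pl\Qspace$, where $\Psi$ is merely continuous and $\cE$ has infinite slope in the vanishing coordinates, so that $\sfE$ need not be $\rmC^1$ up to the boundary and $\sfR(\sfq,\cdot)$ is not superlinear uniformly in $\sfq$. This forces one to use the energy-dissipation principle and the chain-rule inequality in a version valid for the lower-semicontinuous degenerate dissipation potential $\sfR$ and for an energy $\sfE$ that is only continuous on $\Qspace$, and to extend the identities of Proposition \ref{pr:ReducedGS}(b),(d) from $\mathOP{int}\Qspace$ to boundary points (or to read the limit equation for a.e.\ $t$ through the slope function). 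A secondary, more routine point is the passage to the pointwise limit $c^\eps(t)\to\Psi(\sfq(t))$, and in particular the handling of the endpoint contribution $\cE(c^\eps(T))$, which is dealt with above by arguing on subintervals $[0,t]$.
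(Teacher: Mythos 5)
Your argument is correct and follows essentially the same route as the paper's proof: the energy-dissipation balance for the $\eps$-solutions, the compactness of Theorem \ref{th:Compactness}(ii) triggered by $c^\eps(0)\to\ol c_0$, identification of $\sfq(0)=Q_\fast\ol c_0$, the $\Gamma$-liminf estimate to obtain the limiting upper energy-dissipation inequality, and Proposition \ref{pr:ReducedGS}(b,d,e) together with the energy-dissipation principle to conclude that $\sfq$ solves \eqref{eq:reducedRRE}. You merely spell out steps the paper compresses (the bounded-energy bound, the endpoint term via subintervals, the Fenchel--Young/chain-rule closing argument, and the boundary/degeneracy caveats the paper subsumes under ``special care''), so no substantive difference remains.
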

\begin{proof} The solutions $c^\eps$ satisfy the EDB
$\cE(c^\eps(T))+ \fD_\eps(c^\eps) = \cE(c^\eps(0))$. Using
$c^\eps \to c^0$ in $\L^1([0,T];\R^{i_*})$ and
$\limsup_{\eps\to 0^+}\fD_\eps(c^\eps)\leq \lim_{\eps\to 0^+}
\cE(c^\eps(0))= \cE(\ol c_0)< \infty$, we obtain
$\sfq^\eps:=Q_\fast c^\eps \to \sfq$ weakly in
$\rmW^{1,1}([0,T];\Qspace)$ and strongly in $\rmC^0([0,T];\Qspace)$ by
invoking Theorem \ref{th:Compactness}(ii). Moreover, because of
$\ol c_0\in \scrMsl$ and
$\sfq^\eps(0)=Q_\fast c^\eps(0) \to Q_\fast \ol c_0$ we have
$\sfq(0)=Q_\fast \ol c_0 $ and hence $\ol c_0=\Psi(\sfq(0)) $ and
$\cE(\ol c_0)=\sfE(\sfq(0))$.  Passing to the limit $\eps\to 0^+$
using the liminf estimate in $\fD_\eps \GamLimE \fD_0$ we arrive
at
\[
\sfE(\sfq(T)) + \fD_0(\Psi(\sfq)) \leq \cE(c^0(T)) + 
\fD_0(c^0) \leq \cE(\ol c_0)  = \sfE(\sfq(0)) \,.
\]
Because $ \fD_0(\Psi(\cdot))$ has the
$\sfR{\oplus}\sfR^*$ structure  (cf.\
Proposition \ref{pr:ReducedGS}(d+e)) the energy-dissipation principle
shows that $\sfq$ solves the reduced RRE \eqref{eq:reducedRRE}.
\end{proof}

\subsection{The projection approach} 
\label{suu:Projection}

By contrast to Section \ref{su:CoarseGrain} above, in this section we maintain
the variable $c$.  First, we justify the limiting equation \eqref{eq:Limit.RRE}
with the constraint $c\in \scrMsl$ and the Lagrange multiplier
$\lambda(t) \in \Gamma_\fast$. Secondly, we show that for positive solutions the
evolution can be written as an ODE involving a suitable projection.  Finally,
we compare this to the reduced limiting equation
\eqref{eq:reducedRRE}.\smallskip

\begin{prop}[Limiting equation with constraint]\label{pr:LimEqnConstra}   
  For a fast-slow DBRS $(A,B,c_*,\wh\kappa^\eps)$ satisfying the
  UFEC \eqref{eq:UFEC} we consider a family $c^\eps:[0,T]\to \R^{i_*}$ 
  of solutions of the fast-slow RRE \eqref{eq:RRE.fs}. If along a
  subsequence (not relabeled) we have $ c^\eps\to c^0$ in
  $\rmL^1([0,T];\bfC)$ and $ c^\eps(0)\to \ol c_0 \in \scrMsl $, then
  there exists $c \in \rmC^0([0,T];\bfC)$ such that $c(t)=c^0(t)$
  a.e.\ in $[0,T]$, \ $c(0)=\ol c_0$, \  
  $ Q_\fast c \in \rmW^{1,1}([0,T];\Qspace)$, and $c$ solves the
  limiting equation with constraint:
  \begin{equation}
    \label{eq:LimEqnConstra}
    \dot c(t) = \bm R_\slow(c(t)) + \lambda(t) ,\quad \lambda(t) \in
    \Gamma_\fast, \quad c(t) \in \scrMsl.
  \end{equation}
\end{prop}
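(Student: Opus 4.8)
The plan is to reduce the statement to the already-established reduced limiting equation of Proposition~\ref{pr:LimitEqn.q} and then to lift it back to the variable $c$ along the slow manifold $\scrMsl=\mathOP{im}\Psi$. First I would apply Proposition~\ref{pr:LimitEqn.q} under the present (identical) hypotheses: along the given subsequence it provides $\sfq^\eps:=Q_\fast c^\eps\to\sfq:=Q_\fast c^0$ weakly in $\rmW^{1,1}([0,T];\Qspace)$ and strongly in $\rmC^0([0,T];\Qspace)$, together with $\sfq(0)=Q_\fast\ol c_0$ and with $\sfq$ solving the reduced equation $\dot\sfq=Q_\fast\bm R_\slow(\Psi(\sfq))$ a.e.\ on $[0,T]$. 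I would then set $c:=\Psi\circ\sfq$. Since $\Psi:\Qspace\to\bfC$ is continuous (Proposition~\ref{prop:InterEquil}) and $\sfq\in\rmC^0$, this gives $c\in\rmC^0([0,T];\bfC)$ with $c(t)\in\scrMsl$ for every $t$; relation~\eqref{eq:Psi.Relation} yields $Q_\fast c=\sfq\in\rmW^{1,1}([0,T];\Qspace)$; and $\ol c_0\in\scrMsl=\mathOP{im}\Psi$ together with~\eqref{eq:Psi.Relation} forces $\ol c_0=\Psi(Q_\fast\ol c_0)=\Psi(\sfq(0))=c(0)$.

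Next I would show that $c=c^0$ a.e. The uniform bound $\fD_\eps(c^\eps)\le\cE(c^\eps(0))\to\cE(\ol c_0)<\infty$ and the liminf estimate of $\fD_\eps\MoscoLimE\fD_0$ (Theorem~\ref{thm:GammaCvg}) give $\fD_0(c^0)<\infty$; by the very definition of $\fD_0$ this forces $c^0$ to have a continuous representative, $Q_\fast c^0\in\rmW^{1,1}$, and $\calS_0(c^0(t))<\infty$ for a.e.\ $t$, i.e.\ $c^0(t)\in\scrMsl$ a.e.\ (here UFEC enters via $\calS_0=\calS_\slow+\chi_{\scrMsl}$). On the other hand $c^\eps\to c^0$ in $\rmL^1$ implies $Q_\fast c^\eps\to Q_\fast c^0$ in $\rmL^1$, so with the uniform convergence $Q_\fast c^\eps\to\sfq$ we obtain $Q_\fast c^0=\sfq$ a.e. For these times the implication ``$b\in\scrMsl\Rightarrow b=\Psi(Q_\fast b)$'' (a consequence of~\eqref{eq:Psi.Relation}) gives $c^0(t)=\Psi(\sfq(t))=c(t)$.

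It then remains to produce a measurable $\lambda$ with $\lambda(t)\in\Gamma_\fast$ a.e.\ and $\dot c=\bm R_\slow(c)+\lambda$. On the open set $U:=\{t\in[0,T]:\sfq(t)\in\mathOP{int}\Qspace\}$ the map $\Psi$ is analytic near $\sfq(t)$ (Proposition~\ref{prop:InterEquil}), so on $U$ one has $c=\Psi\circ\sfq$ locally in $\rmW^{1,1}$ with $\dot c=\D\Psi(\sfq)\,\dot\sfq$ a.e.; putting $\lambda:=\dot c-\bm R_\slow(c)$ and using $Q_\fast\D\Psi(\sfq)=I_{m_\fast}$ from~\eqref{eq:ProjectionAndIdentity} together with the reduced equation of Proposition~\ref{pr:LimitEqn.q} gives $Q_\fast\lambda=\dot\sfq-Q_\fast\bm R_\slow(\Psi(\sfq))=0$, so $\lambda(t)\in\mathOP{ker}Q_\fast=\Gamma_\fast$ and~\eqref{eq:LimEqnConstra} holds a.e.\ on $U$. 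The main obstacle is the complementary set $\{t:\sfq(t)\in\pl\Qspace\}$, where $c$ may touch $\pl\bfC$ and $\D\Psi$ need not be bounded: one still has to show that $c=\Psi\circ\sfq$ is absolutely continuous on all of $[0,T]$ and that the multiplier extends measurably into $\Gamma_\fast$, which requires a more careful local analysis of $\Psi$ near the boundary faces of $\Qspace$ (this degeneracy is exactly what is bypassed by assuming strict positivity in \cite{Both03ILRC,DiLiZi18EGCG}). A conceivable alternative is to pass directly to the limit in $\dot c^\eps=\bm R_\slow(c^\eps)+\eps^{-1}\bm R_\fast(c^\eps)$ and set $\lambda:=\lim\eps^{-1}\bm R_\fast(c^\eps)$; there the difficulty shifts to extracting a (say measure-valued) limit of the forces $\eps^{-1}\bm R_\fast(c^\eps)\in\Gamma_\fast$, for which the cosh-dissipation bound does not obviously supply enough compactness.
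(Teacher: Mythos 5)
Your Steps 1--3 are fine and not circular: Proposition \ref{pr:LimitEqn.q} is proved independently of the present statement, and the identification $c^0(t)=\Psi(\sfq(t))$ a.e.\ together with $c=\Psi\circ\sfq\in\rmC^0([0,T];\bfC)$, $Q_\fast c=\sfq\in\rmW^{1,1}$, $c(0)=\ol c_0$ is exactly what Theorem \ref{th:Compactness} supplies. The genuine gap is the one you flag yourself, and it is not a removable technicality: your construction of the multiplier rests on the chain rule $\dot c=\D\Psi(\sfq)\,\dot\sfq$, which is available only where $\sfq(t)\in\mathOP{int}\Qspace$, since Proposition \ref{prop:InterEquil} gives differentiability of $\Psi$ only in the interior. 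On the set of times with $\sfq(t)\in\pl\Qspace$ you have neither absolute continuity of $\Psi\circ\sfq$ nor any formula for $\dot c$, so $\lambda:=\dot c-\bm R_\slow(c)$ is not even defined there, let alone shown to lie in $\Gamma_\fast$. But covering exactly this degenerate regime (solutions touching $\pl\bfC$, stoichiometric vectors not independent) is the point of the proposition, as the paper emphasizes in contrast to \cite{Both03ILRC,DiLiZi18EGCG}; and your fallback of extracting a limit of $\eps^{-1}\bm R_\fast(c^\eps)$ is, as you note, unsupported by the available bounds. So the proposal is incomplete where it matters most.

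The paper's proof avoids differentiating $\Psi$ altogether, and this is the idea you are missing. It defines $c=\Psi(\sfq)$ only to obtain continuity and the identification with $c^0$, passes to the limit in the energy-dissipation balance using the liminf part of Theorem \ref{thm:GammaCvg} to get $\cE(c(T))+\fD_0(c)\le\cE(c(0))$, and then applies the energy-dissipation principle for the effective gradient system $(\bfC,\cE,\cR_\eff)$. The resulting inclusion $\dot c\in\pl_\xi\bigl(\cR^*_\slow(c,-\D\cE(c))+\chi_{\Gamma_\fast^\perp}(-\D\cE(c))\bigr)$ delivers both pieces of \eqref{eq:LimEqnConstra} at once: since $\pl\chi_{\Gamma_\fast^\perp}(\xi)$ equals $\Gamma_\fast$ when $\xi\in\Gamma_\fast^\perp$ and is empty otherwise, one obtains $\dot c\in\bm R_\slow(c)+\Gamma_\fast$ together with the constraint $\D\cE(c)\in\Gamma_\fast^\perp$, which Lemma \ref{le:DEGamFast} converts into $c\in\scrMsl$. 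Thus the Lagrange multiplier is produced by convex analysis applied to the effective dissipation potential, not by the geometry of the slow manifold, and no regularity of $\Psi$ beyond continuity is needed. To rescue your route you would have to restrict to strictly positive data or establish a chain rule for $\Psi\circ\sfq$ up to the boundary faces of $\Qspace$, which in effect reproduces the EDP argument.
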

\begin{proof} We proceed as in the proof of Proposition
  \ref{pr:LimitEqn.q} but stay with $c$ rather than reducing to $\sfq
  = Q_\fast c$. The solutions $c^\eps$ satisfy the EDB
$\cE(c^\eps(T))+ \fD_\eps(c^\eps) = \cE(c^\eps(0))$. Using
$c^\eps \to c^0$ in $\L^1([0,T];\R^{i_*})$ and
$\limsup_{\eps\to 0^+}\fD_\eps(c^\eps)\leq \lim_{\eps\to 0^+}
\cE(c^\eps(0))= \cE(\ol c_0)< \infty$, we have $Q_\fast c^\eps \to
\sfq$ weakly in $\rmW^{1,1}([0,T];\bfC)$ and strongly in
$\rmC^0([0,T];\R^{i_*})$, see Theorem \ref{th:Compactness}(ii).
With this we define $c(t)=\Psi(\sfq(t))$ for $t\in [0,T]$ such that $c
\in \rmC^0([0,T];\bfC)$ and $Q_\fast c(t)=\sfq(t)$. 

Passing to the limit $\eps\to 0^+$ in the EDB we obtain $\cE(c(T)) +
\fD_0(c) \leq \cE(c(0))$, and the energy-dissipation principle gives
the gradient-flow equation  
\begin{align}
\label{eq:eff.Eqn.c}
\dot{c} &  \in \pl_\xi \cR^*_\eff(c,{-}\D\cE(c)) = \partial_{\xi}\bigg(\cR_\slow^{*}(c,-\D\cE(c))+\chi_{\Gamma_\fast^{\perp}}(-\D\cE(c))\bigg).
\end{align}
For a linear subspace $Y\subset \R^{i_*}$ the set-valued convex
subdifferential $\pl
\chi_{Y^\perp}(\xi) $  equals $Y$ for $\xi \in Y^\perp$ and
$\emptyset$ otherwise, hence the last relation has the form 
\begin{align*}
\dot{c} &
\in\partial_{\xi}\cR_\slow^{*}(c,-\D\cE(c))+\Gamma_\fast= \bm
R_\slow(c)+\Gamma_\fast \quad
\text{and}\quad  \D\cE(c) \in \Gamma^\perp_\fast\ . 
\end{align*}
With Lemma \ref{le:DEGamFast} we can replace the last
constraint by $c\in \scrMsl$, and \eqref{eq:LimEqnConstra} is
established. 
\end{proof}

To obtain an ODE of the form $\dot c = V(c)$ instead of the limiting
equation \eqref{eq:LimEqnConstra} with constraint, we have to resolve
the constraint $\D\cE(c)\in \Gamma_\fast^\perp$. For any curve $s \to
\wt c(s)\in
\scrMsl\cap \bfC_+$ we have $\D\cE(\wt c(s)) \in \Gamma_\fast^\perp$ and
taking the derivative with respect to $s$, we find 
\[
\dot{\wt c}(s) \in \rmT_{\wt c(s)} \scrMsl \quad \text{and} \quad 
\D^{2}\cE(\wt c(s))\dot{\wt c}(s) \in \Gamma_\fast^\perp.
\]
Hence, for $c\in \scrMsl\cap \bfC_+$ the tangent space $\rmT_c \scrMsl
$ of $\scrMsl$ at $c$ is given by 
\[
\rmT_c \scrMsl = \big(\bbH(c)\big)^{-1} \Gamma_\fast^\perp  \quad
\text{with} \quad 
\bbH(c) :=\D^{2}\cE(c) = \mathrm{diag}(1/c_{1},\dots,1/c_{i_*}).
\]

With this we obtain the following
representation of the limiting equation, which matches that in
\cite[Thm.\,2(b)]{Both03ILRC} and \cite[Thm.\,4.5]{DiLiZi18EGCG}. Our
result is more general, since we do not need to assume that the
stoichiometric vectors $\bigset{\gamma^r}{ r\in R_\fast}$ are linearly
independent.

\begin{prop}[Limiting equation for $c\in \bfC_+$]\label{pr:limitEqn.c}
A curve $c:[0,T] \to \bfC_+$ is a solution \eqref{eq:LimEqnConstra} if
and only if 
\begin{equation}
  \label{eq:LimEqnProject}
  \dot c = \big( I-\bbP(c)\big)\, \bm R_\slow(c)  \quad \text{and}
  \quad c(0) \in \scrMsl.
\end{equation}
where the projector $\bbP(c) \in \R^{i_*\ti i_*}$ is defined
via $\mathOP{im}\bbP(c) = \Gamma_\fast$ and
$\mathOP{ker} \bbP(c) =\bbH(c)^{-1} \Gamma_\fast^\perp$. 
\end{prop}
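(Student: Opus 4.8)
The plan is to derive \eqref{eq:LimEqnProject} from \eqref{eq:LimEqnConstra} and conversely, in both directions by simply applying the projector $\bbP(c)$ (resp.\ $I-\bbP(c)$) to the constrained equation and controlling the constraint $c(t)\in\scrMsl$ through the potential-force characterisation of Lemma~\ref{le:DEGamFast}. First I would record that $\bbP(c)$ is well defined for $c\in\bfC_+$: since $\bbH(c)=\D^2\cE(c)=\mathrm{diag}(1/c_1,\dots,1/c_{i_*})$ is symmetric and positive definite, the subspace $\bbH(c)^{-1}\Gamma_\fast^\perp$ is precisely the $\bbH(c)$-orthogonal complement of $\Gamma_\fast$ (because $x\in\bbH(c)^{-1}\Gamma_\fast^\perp\iff\gamma^\top\bbH(c)x=0$ for all $\gamma\in\Gamma_\fast$), hence $\R^{i_*}=\Gamma_\fast\oplus\bbH(c)^{-1}\Gamma_\fast^\perp$ and $\bbP(c)$ is the unique linear projection with $\mathOP{im}\bbP(c)=\Gamma_\fast$ and $\mathOP{ker}\bbP(c)=\bbH(c)^{-1}\Gamma_\fast^\perp=\rmT_c\scrMsl$. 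In particular $\bbP(c)^2=\bbP(c)$, $\mathOP{im}(I-\bbP(c))=\mathOP{ker}\bbP(c)$, and $c\mapsto\bbP(c)$ is smooth on $\bfC_+$. Throughout I use Lemma~\ref{le:DEGamFast}: for $c\in\bfC_+$ one has $c\in\scrMsl\iff\D\cE(c)\in\Gamma_\fast^\perp$.

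For the implication \eqref{eq:LimEqnConstra}~$\Rightarrow$~\eqref{eq:LimEqnProject}, let $c$ solve \eqref{eq:LimEqnConstra}, so $\dot c=\bm R_\slow(c)+\lambda$ with $\lambda(t)\in\Gamma_\fast$ and $c(t)\in\scrMsl$ for all $t$; in particular $c(0)\in\scrMsl$. Since $c$ takes values in $\bfC_+$ where $\D\cE$ is smooth, $t\mapsto\D\cE(c(t))$ is absolutely continuous with $\tfrac{\rmd}{\rmd t}\D\cE(c(t))=\bbH(c(t))\dot c(t)$ for a.e.\ $t$, and by Lemma~\ref{le:DEGamFast} this curve stays in the fixed linear subspace $\Gamma_\fast^\perp$; hence $\bbH(c(t))\dot c(t)\in\Gamma_\fast^\perp$, i.e.\ $\dot c(t)\in\bbH(c(t))^{-1}\Gamma_\fast^\perp=\mathOP{ker}\bbP(c(t))$ for a.e.\ $t$. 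Applying $\bbP(c(t))$ to $\dot c=\bm R_\slow(c)+\lambda$ and using $\bbP(c)\dot c=0$ together with $\bbP(c)\lambda=\lambda$ (since $\lambda\in\Gamma_\fast=\mathOP{im}\bbP(c)$) gives $0=\bbP(c)\bm R_\slow(c)+\lambda$, whence $\lambda=-\bbP(c)\bm R_\slow(c)$ and $\dot c=(I-\bbP(c))\bm R_\slow(c)$, which together with $c(0)\in\scrMsl$ is \eqref{eq:LimEqnProject}.

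For the converse, let $c:[0,T]\to\bfC_+$ satisfy $\dot c=(I-\bbP(c))\bm R_\slow(c)$ and $c(0)\in\scrMsl$. Set $\lambda:=-\bbP(c)\bm R_\slow(c)$; then $\lambda(t)\in\mathOP{im}\bbP(c(t))=\Gamma_\fast$ and $\dot c=\bm R_\slow(c)+\lambda$. It remains to show $c(t)\in\scrMsl$ for all $t$. Because $\dot c(t)\in\mathOP{im}(I-\bbP(c(t)))=\mathOP{ker}\bbP(c(t))=\bbH(c(t))^{-1}\Gamma_\fast^\perp$, we get $\tfrac{\rmd}{\rmd t}\D\cE(c(t))=\bbH(c(t))\dot c(t)\in\Gamma_\fast^\perp$ for a.e.\ $t$; since $\Gamma_\fast^\perp$ is a closed linear subspace and $\D\cE(c(0))\in\Gamma_\fast^\perp$ (Lemma~\ref{le:DEGamFast} and $c(0)\in\scrMsl$), integrating yields $\D\cE(c(t))=\D\cE(c(0))+\int_0^t\bbH(c(s))\dot c(s)\dd s\in\Gamma_\fast^\perp$ for every $t\in[0,T]$, and Lemma~\ref{le:DEGamFast} gives $c(t)\in\scrMsl$. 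Thus $c$ solves \eqref{eq:LimEqnConstra}.

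The only genuinely delicate step is the first one — verifying that $\bbP(c)$ is well defined and identifying $\mathOP{ker}\bbP(c)$ with the tangent space $\rmT_c\scrMsl$ used just before the proposition; once the splitting $\R^{i_*}=\Gamma_\fast\oplus\bbH(c)^{-1}\Gamma_\fast^\perp$ is in place, both implications are immediate by applying $\bbP(c)$ or $I-\bbP(c)$ and invoking Lemma~\ref{le:DEGamFast} for the time-invariance of the constraint $\D\cE(c)\in\Gamma_\fast^\perp$. A minor technical care is needed only in that solutions are merely absolutely continuous, so all pointwise manipulations of $\dot c$ hold for a.e.\ $t$ while $c(t)\in\scrMsl$ holds for every $t$ by continuity. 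Note that nowhere does the argument use linear independence of $\bigset{\gamma^r}{r\in R_\fast}$, which is exactly why the statement is more general than the corresponding results in \cite{Both03ILRC, DiLiZi18EGCG}.
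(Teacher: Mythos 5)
Your proof is correct and follows essentially the same route as the paper: the same direct-sum argument establishing that $\bbP(c)$ is well defined, and both implications obtained by applying $\bbP(c)$ to the equation and invoking Lemma~\ref{le:DEGamFast}. The one point where you deviate — replacing the paper's appeal to $\dot c(t)\in\rmT_{c(t)}\scrMsl$ and its one-line assertion that tangency plus $c(0)\in\scrMsl$ gives $c(t)\in\scrMsl$ by the explicit integration of $\tfrac{\rmd}{\rmd t}\D\cE(c(t))=\bbH(c(t))\dot c(t)\in\Gamma_\fast^\perp$ in the fixed closed subspace $\Gamma_\fast^\perp$ — is a slightly more rigorous rendering of the paper's Step~2, not a gap.
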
 
\begin{proof} \underline{Step 1. \emph{Definition of the projector
      $\bbP(c)$}:} The projector is uniquely defined if $Y_\rmR:=\Gamma_\fast$
  and $Y_\rmK:=\bbH(c)^{-1} \Gamma_\fast^\perp$ provide a direct
  decomposition of $\R^{i_*}$. Assuming $v \in Y_\rmR\cap Y_\rmK$ we
  have $v\in \Gamma_\fast$ and $\bbH(c) v \in
  \Gamma_\fast^\perp$. This implies $v\cdot \bbH(c) v=0$, but since
  $\bbH(c)$ is positive definite we arrive at $v=0$. Hence, $
  Y_\rmR\cap Y_\rmK = \{0\}$. Obviously, $\mathOP{dim} Y_\rmR
  +\mathOP{dim} Y_\rmK= i_*$, so that $\R^{i_*} =  Y_\rmR\oplus Y_\rmK$ is
  established.

\underline{Step 2. \eqref{eq:LimEqnProject} $\Longrightarrow$
    \eqref{eq:LimEqnConstra}:} We set $\lambda(t) = -\bbP(c(t)) \bm
  R_\slow(c) $, and
  with \eqref{eq:LimEqnProject} we obtain 
\[
 \dot c(t) = \bm R_\slow(c(t)) - \bbP(c(t)) \bm R_\slow(c(t))=  \bm
 R_\slow(c(t)) + \lambda(t) \quad \text{ with } \lambda(t)\in \Gamma_\fast.
\]
Moreover, $\bbP(c(t))\dot c(t)= \bbP(c)(I{-}\bbP(c))\bm
R_\slow(c)=0$, which implies $\dot c \in  \bbH(c)^{-1}
\Gamma_\fast^\perp = \rmT_{c(t)}\scrMsl$. Hence, with $ c(0) \in
\scrMsl $  we obtain $c(t) \in \scrMsl $ for all $t\in [0,T]$, and
\eqref{eq:LimEqnConstra} is established. 

\underline{Step 3. \eqref{eq:LimEqnConstra} $\Longrightarrow$
  \eqref{eq:LimEqnProject}:} From $c(t)\in \scrMsl$ we obtain
$\dot c(t)\in \rmT_{c(t)}\scrMsl= \bbH(c(t))^{-1}\Gamma_\fast^\perp $
and conclude $0= \bbP(c) \dot c = \bbP(c) \bm R_\slow(c) +
\bbP(c)\lambda$. Using $\lambda \in\mathOP{im}\bbP(c) =
\Gamma_\fast$ we have $\bbP(c)\lambda= \lambda$ and find 
\[
\big(I- \bbP(c)\big)\, \bm R_\slow(c) = \bm R_\slow(c) - \bbP(c) \bm
R_\slow(c) = \bm R_\slow(c) + \bbP(c)\lambda =  \bm R_\slow(c) +
\lambda = \dot c,
\]
which is the desired equation \eqref{eq:LimEqnProject}.
\end{proof}

To compare the last result with the reduced limiting equation
\eqref{eq:reducedRRE}, we simply use the relation $c(t) =
\Psi(\sfq(t))$ and the fact that $\Psi$ is smooth on
$\mathOP{int}\Qspace $. From this we obtain 
\[
\big(I-\bbP(c)\big)\, \bm R_\slow(c) \ = \ \dot c  \ = \ \rmD\Psi(\sfq)
\dot \sfq\ = \  \rmD\Psi(\sfq(t))Q_\fast \bm R_\slow(\Psi(\sfq)) \ = \
\rmD\Psi(\sfq(t))Q_\fast\bm R_\slow(c).
\]
Thus, we can conclude that for $c=\Psi(\sfq)\in \scrMsl$ we have the
identity 
\[
\big(I-\bbP(c)\big)\ = \ \rmD\Psi(\sfq)Q_\fast,
\]
since the above identity must hold for all possible right-hand sides
$\bm R_\slow$. This can also be shown by using the identity
$c= \Psi(Q_\fast c)$ for all $c\in \scrMsl$ and taking derivatives in the
direction $v\in \Gamma_\fast$ and $w\in \rmT_c\scrMsl$, respectively.  In
particular, this provides the explicit form of the projection of Proposition
\ref{pr:ReducedGS}(a).

\subsection{An example for the effective gradient system}
\label{su:Exa.EffGS} 

In the following example we consider a system with $i_*=5$ species and
$r_*=2$ bimolecular reactions, one fast and one slow. As a result we obtain a
limiting equation with one reaction that is no longer of mass-action type
but involves all species. Taking a further EDP limit (done only
formally)  we recover a trimolecular 
reaction of mass-action type again. 

We consider the following two reactions
\[
\text{fast:}\quad X_{1}+X_{2}\leftrightharpoons X_{3} \qquad
\text{and}\qquad  \text{slow:}\quad X_{3}+X_{4}\rightleftharpoons
X_{5}, 
\]
which give rise to the two stoichiometric vectors 
\[
\gamma^\fast=(1,1,-1,0,0)^\top \quad \text{and} \quad  \gamma^\slow=(0,0,1,1,-1)^\top.
\]
Assuming the steady state $c_*=(1,1,\varrho,1,1)^\top$ and the reaction
coefficients $\wh \kappa^\eps = (\kappa^\fast/\eps,\kappa^\slow)$ the
RRE \eqref{eq:RRE.fs} takes the form
\[
  \dot{c}=-\frac{\kappa^\fast \varrho^{1/2}}{\eps}\big(c_{1}c_{2}-c_{3}/\varrho\big)\gamma^\fast
  - \kappa^\slow \varrho^{1/2} \big(c_{3}c_{4}/\varrho -c_{5}\big) \gamma^\slow \,.
\]
The slow manifold is $\scrMsl=\bigset{c\in {[0,\infty[}^5}{
  c_{1}c_{2}=c_{3}/\varrho} $ and $\Gamma_\fast = \mathOP{span}\gamma^\fast$. With 
\[
Q_\fast=\begin{pmatrix}1 & 0 & 1 & 0 & 0\\
0 & 1 & 1 & 0 & 0\\
0 & 0 & 0 & 1 & 0\\
0 & 0 & 0 & 0 & 1
\end{pmatrix}
\]
we obtain $\Qspace = \mathOP{im} Q_\fast={[0,\infty[}^4$. 
For $\sfq \in \Qspace$ it is easy to compute $\Psi_\varrho(\sfq)$ as a minimizer of
$c\mapsto \cE(c)$ under the constraint $Q_\fast c=\sfq=(q_1,...,q_4)$. We obtain 
\begin{align*}
\Psi_\varrho( \sfq) &= \big( q_1 {-}a_\varrho(q_1,q_2),\  
 q_2 {-} a_\varrho(q_1,q_2), \ a_\varrho(q_1,q_2), \ 
q_3,\  q_4)^\top \in \bfC = {[0,\infty[}^5\\
\text{with }& a_\varrho(q_1,q_2) =\frac1{2\varrho}\Big(
1 + \varrho q_1+ \varrho q_2 - \sqrt{(1{+}\varrho q_1{+}\varrho q_2)^2 -
  4\varrho^2q_1q_2} \Big) \in \big[0,\min\{q_1,q_2\}\big] .
\end{align*}  
In particular, the UFEC \eqref{eq:UFEC} holds.  Moreover, the positivity and
monotonicity condition \eqref{eq:InterEquilCond} can be checked easily with
$\ol\sfq=(1,1,1,1)^\top$. We see that
$c(\theta):=\Psi_\varrho(\sfq{+}\theta\ol\sfq)$ for $\theta \in {]0,1]}$ is
given by
\begin{align*}
c(\theta) &=
\big(q_1{+}\theta -a_\varrho(q_1{+}\theta ,q_2{+}\theta),\: 
 q_2{+}\theta-a_\varrho(q_1{+}\theta, q_2{+}\theta), \: 
 a_\varrho(q_1{+}\theta, q_2{+}\theta), \: 
 q_3{+}\theta,\: q_4{+}\theta \big)^\top.
\end{align*}
Clearly we have $c(\theta)_i>0$, since $c(\theta)_i=0$ would imply $q_i+\theta=0$. 
Differentiating with respect to $\theta$, we obtain 
\begin{align*}
c'(\theta) = \big(1-a_\varrho'[\theta],1-a_\varrho'[\theta], 
 a_\varrho'[\theta], 1,1\big)^\top \quad \text{with } 
a_\varrho'[\theta] = \frac{\varrho(c_1(\theta){+}c_2(\theta))}{1+ \varrho(c_1(\theta){+}c_2(\theta))} , 
\end{align*}
which implies that $c'(\theta)_i>0$. Hence, $\Psi_\varrho(\sfq{+}\theta\ol
q)_i=c(\theta)_i\geq c(0)_i= \Psi(\sfq)$, i.e. the monotonicity condition \eqref{eq:InterEquilCond}
holds. 

We investigate the reduced system. First, we observe that the reduced limiting equation
\eqref{eq:reducedRRE} is given by 
\begin{equation}
  \label{eq:Exa.ReducRRE}
    \dot \sfq = Q_\fast \bm R_\slow(\Psi_\varrho(\sfq)) = -\kappa^\slow
  \varrho^{1/2} \Big( \frac{a_\varrho(q_1,q_2)q_3}{\varrho} - q_4\Big)\,
  \wh\bfgamma \quad \text{with }\wh\bfgamma :=
  Q_\fast \gamma^\slow = (1,1,1,-1)^\top.
\end{equation}
Since $a_\varrho$ is not a monomial, this RRE is no longer of
mass-action type. 

According to Section \ref{su:CoarseGrain} the gradient structure
$(\Qspace,\sfE_\varrho,\sfR_\varrho)$ for \eqref{eq:Exa.ReducRRE} is given via
\begin{align*}
\sfE_\varrho(\sfq)\ \ &= \cE(\Psi_\varrho(\sfq))= \LB(q_1{-}a) + \LB(q_2{-}a)+
\varrho \LB(a/\varrho) + \LB(q_3)+\LB(q_4)\Big|_{a=a_\varrho(q_1,q_2)} ,
\\
\sfR^*_\varrho(\sfq,\zeta)&= \cR_\slow(\Psi_\varrho(\sfq),Q_\fast^\top \zeta) =
\kappa^\slow\, \big(a_\varrho(q_1,q_2)\,q_3q_4 \big)^{1/2}
\:\sfC^*\big(\zeta_1{+}\zeta_2{+}\zeta_3{-}\zeta_4 \big).  
\end{align*}
The energy $\sfE_\varrho$ is no longer of Boltzmann type, because the
previously uncoupled densities $c_1$, $c_2$, and $c_3$ are now
constrained to lie on $\scrMsl$, i.e.\ $c_1c_2=c_3$. Nevertheless, the
form is close to a mass-action type for the trimolecular reaction 
$ Y_1+Y_2+Y_3 \rightleftharpoons Y_4$. 

To recover an exact trimolecular reaction of mass-action type,
one has to perform another limit, namely $\varrho\to 0^+$, which means
that the species $X_3$ is no longer observed, but still exists on a
microscopic reaction pathway. For the limit $\varrho\to 0^+$ we simply
observe the expansion
\[
  a_\varrho(q_1,q_2) = \varrho q_1q_2 + O(\varrho^2) \quad \text{for }
  \varrho\to 0^+,
\]
which implies $\Psi_\varrho(q)\to\Psi_0(q):=(q_1,q_2,0,q_3,q_4)^T$.
If we additionally choose $\kappa^\slow = \ol\kappa /\varrho^{1/2}$ and
insert the expansion for $a_\varrho$ we obtain 
\begin{align*}
\sfE_\varrho(\sfq)\ \  &\to  \ \sfE_0(\sfq) \ = \sum_{j=1}^4 \LB(q_j), \\
\sfR^*_\varrho(\sfq,\zeta) & \to \sfR_0^*( q,\zeta) =\ol\kappa
\,\big(q_1q_2q_3q_4)^{1/2}\, \sfC^*\big(
\zeta_1{+}\zeta_2{+}\zeta_3{-}\zeta_4 \big). 
\end{align*}
Clearly, this is the gradient system generating the RRE of the
trimolecular reaction $ X_1+X_2+X_4 \rightleftharpoons X_5$. Of
course, it is possible to show that this convergence is again a
EDP-convergence with tilting of the gradient systems
$(\Qspace,\sfE_\varrho,\sfR_\varrho)$ to the effective system $(\Qspace,\sfE_0,\sfR_0)$.

\section{Proof of Theorem \ref{thm:GammaCvg}}
\label{sec:Proof-of-Theorem}

Here we will show the $\Gamma$-convergence of the dissipation
functionals, namely $\fD_\eps \MoscoLimE \fD_0$. As usual the proof
consists in three parts: (i) compactness of the sequences $(c^\eps)$
satisfying $\fD_\eps(c^\eps)\leq C$, (ii) the liminf estimate, and the
(iii) the limsup estimate, which needs the construction of recovery
sequences. 

All the following results are derived under the assumptions of
Theorem \ref{thm:GammaCvg}: The fast-slow DBRS
$(A,B,c_*,\wh\kappa^\eps)$ satisfies the unique fast-equilibrium
condition UFEC \eqref{eq:UFEC}. For constructing the recovery sequence in
Section \ref{su:Recovery}, we need additionally the positivity and monotonicity
assumption \eqref{eq:InterEquilCond} for $\Psi$.

\subsection{Compactness}
\label{su:Compactness}

In the definition of $\fD_\eps \MoscoLimE \fD_0$ we consider sequences
$c^\eps \to c^0$ in $\L^1([0,T];\bfC)$ that additionally satisfy
$\sup_{\eps\in {]0,1[},\ t\in [0,T]} \cE(c^\eps(t)) \leq C$.  The
aim is to extract a strongly converging subsequence
$c^{\eps} \to c^{0}$, such that we can talk about pointwise
convergence almost everywhere.  This will be necessary in the liminf
estimate because we cannot rely on convexity, in contrast to the linear
theory developed in \cite{MieSte19?CGED}.  The compactness is
derived via two quite different arguments that complement each other
and reflect the underlying fast-slow structure, which is seen on the
local level via the decomposition of $ \rmT_c \bfC = \R^{i_*}$
in the direct sum of $\Gamma_\fast$ and $\rmT_c \scrMsl$, see Step 1
in the proof of Proposition \ref{pr:limitEqn.c}. First, we derive
time regularity for the slow part of the reactions. Secondly, we prove
convergence towards the slow manifold which then provides the
remaining information for the whole sequence.

\begin{thm}[Compactness via dissipation bound] \label{th:Compactness}
  Consider a family $(c^{\eps})_{\eps>0}$ with
  $c^\eps \rightharpoonup c^0$ in $\L^1([0,T]; \bfC) $, \ 
  $\sup_{\eps>0, \ t\in [0,T]}\cE(c^{\eps}(t))\leq M_\mafo{ener}<\infty$, and
  $\fD_{\eps}(c^{\eps})\leq M_\mafo{diss} <\infty$. Then, we have
\\
(i) \ \ $c^\eps(\cdot)$ is bounded in $\L^\infty([0,T];\bfC)$;
\\
(ii) \  $Q_\fast c^\eps \to Q_\fast c^0$ weakly in
$\W^{1,1}([0,T];\R^{m_\fast})$ and strongly in $\rmC^0([0,T];\R^{m_\fast})$;
\\
(iii) $c^0(t)= \wt c(t):= \Psi(Q_\fast c^0(t)) \in \scrMsl$ for a.a.\ $t\in
[0,T]$, and, in particular $\wt c\in\rmC^0([0,T],\bfC)$;
\\
(iv) \;$c^\eps \to c^0$ in $\L^p([0,T];\bfC)$ strongly for all $p\in
{[1,\infty[}$.
\end{thm}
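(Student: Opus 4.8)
The plan is to establish (i)--(iv) in that order, exploiting the orthogonal decomposition $\R^{i_*}=\Gamma_\fast\oplus\Gamma_\fast^\perp$ and the monotone splittings $\cR^*_\eps=\cR^*_\slow+\tfrac1\eps\cR^*_\fast$ and $\calS_\eps=\calS_\slow+\tfrac1\eps\calS_\fast$. For (i) I would use only the energy bound: since $\cE(c)=\sum_i c^*_i\LB(c_i/c^*_i)$ is a finite sum of nonnegative terms that grow superlinearly in $c_i$, the bound $\cE(c^\eps(t))\le M_\mafo{ener}$ forces $|c^\eps(t)|\le K$ with $K=K(M_\mafo{ener})$; as $\fD_\eps(c^\eps)<\infty$ implies $c^\eps\in\rmW^{1,1}\subset\rmC^0$, this bound holds for every $t\in[0,T]$.

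For (ii) the first and crucial step is to produce a lower bound for $\cR_\eps(c,\cdot)$ in the slow directions that is uniform in $\eps$ and in $c$ up to $\partial\bfC$. Since $\gamma^r\cdot\xi=0$ for $r\in R_\fast$ and $\xi\in\Gamma_\fast^\perp$, and $\sfC^*(0)=0$, one has $\cR^*_\fast(c,\xi)=0$, hence $\cR^*_\eps(c,\xi)=\cR^*_\slow(c,\xi)$ for $\xi\in\Gamma_\fast^\perp$; restricting the Legendre supremum defining $\cR_\eps$ to $\Gamma_\fast^\perp$ gives
\[
  \cR_\eps(c,v)\ \ge\ \sup_{\xi\in\Gamma_\fast^\perp}\bigl\{\,v\cdot\xi-\cR^*_\slow(c,\xi)\,\bigr\}.
\]
By (i) the prefactors $\sqrt{c^{\alpha^r}c^{\beta^r}}$ are bounded, so $\{\cR^*_\slow(c,\cdot)|_{\Gamma_\fast^\perp}:\cE(c)\le M_\mafo{ener}\}$ is a family of nonnegative finite convex functions, uniformly bounded on bounded sets; transporting to $\R^{m_\fast}$ through the isomorphism $Q_\fast|_{\Gamma_\fast^\perp}$ and passing to conjugates, the right-hand side dominates $\Theta(|Q_\fast v|)$ for one fixed superlinear convex $\Theta$ depending only on $M_\mafo{ener}$. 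Since $\calS_\eps\ge0$, the dissipation bound gives $\int_0^T\Theta(|Q_\fast\dot c^\eps|)\,\dd t\le M_\mafo{diss}$, so by the de la Vallée-Poussin criterion $(Q_\fast\dot c^\eps)_\eps$ is equi-integrable in $\L^1([0,T];\R^{m_\fast})$. Combined with the uniform bound from (i), the estimate $|Q_\fast c^\eps(t)-Q_\fast c^\eps(s)|\le\int_s^t|Q_\fast\dot c^\eps|$, and Dunford-Pettis together with Arzelà-Ascoli, one extracts a subsequence with $Q_\fast c^\eps\to\sfq$ in $\rmC^0$ and $Q_\fast\dot c^\eps\rightharpoonup\dot\sfq$ in $\L^1$, so $\sfq\in\rmW^{1,1}$ and $Q_\fast c^\eps\rightharpoonup\sfq$ weakly in $\rmW^{1,1}$. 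As $c^\eps\rightharpoonup c^0$ in $\L^1$ forces $Q_\fast c^\eps\rightharpoonup Q_\fast c^0$, uniqueness of weak limits yields $\sfq=Q_\fast c^0$; this limit being fixed, the convergence holds along the full sequence, which proves (ii).

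For (iii) and (iv) I would first observe that $\cR_\eps\ge0$ and $\calS_\slow\ge0$ give $\int_0^T\calS_\fast(c^\eps)\,\dd t\le\eps M_\mafo{diss}\to0$. By Lemma~\ref{le:Equilibria} applied to the fast subsystem and the UFEC~\eqref{eq:UFEC}, the zero set of the continuous function $\calS_\fast$ equals $\scrEfa=\scrMsl=\Psi(\Qspace)$, so on the compact set $\{c\in\bfC:|c|\le K\}$ a subsequence argument shows $\omega(\delta):=\sup\{|c-\Psi(Q_\fast c)| : |c|\le K,\ \calS_\fast(c)\le\delta\}\to0$ as $\delta\to0^+$ (any sequence along which $\calS_\fast\to0$ has subsequential limits lying in $\scrMsl$, hence fixed by the continuous map $\Psi\circ Q_\fast$). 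Splitting $[0,T]$ into $\{\calS_\fast(c^\eps)\le\delta\}$ and its complement, of measure $\le\eps M_\mafo{diss}/\delta$ by Chebyshev, and sending first $\eps\to0$ and then $\delta\to0$, I obtain $\|c^\eps-\Psi(Q_\fast c^\eps)\|_{\L^1}\to0$; since $Q_\fast c^\eps\to Q_\fast c^0$ in $\rmC^0$ and $\Psi$ is uniformly continuous on the compact set $Q_\fast\{|c|\le K\}$, this upgrades to $c^\eps\to\wt c:=\Psi(Q_\fast c^0)$ strongly in $\L^1([0,T];\bfC)$. Comparing with the assumed weak limit gives $c^0=\wt c$ a.e., and $\wt c=\Psi\circ(Q_\fast c^0)\in\rmC^0([0,T];\bfC)$ by (ii) and continuity of $\Psi$, which is (iii). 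Finally (iv) follows from (iii) and the $\L^\infty$ bound (i) via $\int_0^T|c^\eps-c^0|^p\,\dd t\le(2K)^{p-1}\int_0^T|c^\eps-c^0|\,\dd t\to0$ for every $p\in[1,\infty[$.

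The hard part is Step (ii): one must obtain a lower bound on $\cR_\eps(c,\cdot)$ in the non-fast directions that is uniform both in $\eps$ and as $c\to\partial\bfC$, and it is precisely the vanishing of $\cR^*_\fast$ on $\Gamma_\fast^\perp$ together with the mere boundedness (hence superlinear conjugate) of $\cR^*_\slow$ there that delivers this. This is also why only $Q_\fast\dot c^\eps$, and not $\dot c^\eps$ itself, is controlled, since along $\Gamma_\fast$ the primal potential $\cR_\eps$ degenerates as $\eps\to0$.
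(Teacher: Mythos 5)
Your proposal is correct and takes essentially the same route as the paper: coercivity of $\cE$ for (i); for (ii) the lower bound $\cR_\eps(c,v)\ge \sup_\zeta\{Q_\fast v\cdot\zeta - \cR^*_\slow(c,Q_\fast^\top\zeta)\}$ whose conjugate is superlinear uniformly on energy-bounded sets, followed by Dunford--Pettis and Arzel\`a--Ascoli; and for (iii)--(iv) the bound $\int_0^T\calS_\fast(c^\eps)\dd t\le \eps M_\mafo{diss}$ combined with the UFEC and continuity of $\Psi$. The only (harmless) difference is cosmetic: you package the approach to $\scrMsl$ as a uniform modulus $\omega(\delta)$ plus a Chebyshev splitting, yielding strong $\L^1$ convergence directly, whereas the paper argues via pointwise a.e.\ convergence (its Lemma on convergence towards $\scrMsl$) and dominated convergence.
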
 

\noindent
We emphasize that $c^0$ and $\wt c$ may be different, and this happens
even for solutions, if near $t=0$ a jump develops such that (cf.\
Section \ref{su:FastSlowRRE})
\[
\lim c^\eps (0) =:c_0 \ \neq \ \ol c_0 := \lim_{\tau \to 0^+}
\Big(\lim_{\eps\to 0^+} c^\eps(\tau) \Big).
\] 

Before giving the detailed proof we provide two preliminary results
that underpin the two complementary arguments of the proof. 

For deriving bounds on the time derivatives, one heuristically sees
that for fixed $(c,\xi)$ 
we have $\cR_{\eps}^{*}(c,\xi)\nearrow\cR_{\eff}^{*}(c,\xi)$ as $\eps \to 0$.
By duality, this implies $\cR_{\eps}(c,v)\searrow\cR_{\eff}(c,v)$.
This already shows that control of time derivatives has to be obtained
from $\cR_{\eff}(c,\cdot)$, which only controls $Q_\fast \dot
c$ because $\cR_{\eff}(c,v)=\cR_\eff(c,w)$ if $Q_\fast v= Q_\fast w$,
see \eqref{eq:Eff.cR}.

\begin{prop}[Effective dissipation potential]\label{pr:EffDissPot}
For all $\eps>0$ we have $\cR_\eps(c,v) \geq \cR_\eff(c,v)$ for all
$(c,v)\in \bfC\ti \R^{i_*}$. Moreover, $\cR_\eff$ takes the form
\begin{align*}
\cR_\eff(c,v) &= \wt\cR(c,Q_\fast v) \ \text{ where }
\wt\cR(c,\sfq):= \sup\bigset{\zeta\cdot \sfq -
  \cR^*_\slow(c,Q^\top_\fast \zeta) }{ \zeta \in \R^{m_\fast}}. 
\end{align*}
\end{prop}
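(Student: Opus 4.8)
The plan is to first establish the pointwise ordering $\cR_\eps^*(c,\cdot)\le\cR_\eff^*(c,\cdot)$ of the dual potentials and then dualize. Fix $c\in\bfC$ and $\xi\in\R^{i_*}$, and recall $\cR_\eps^*(c,\xi)=\cR_\slow^*(c,\xi)+\tfrac1\eps\cR_\fast^*(c,\xi)$ from \eqref{eq:cR*.eps} together with $\cR_\eff^*(c,\xi)=\cR_\slow^*(c,\xi)+\chi_{\Gamma_\fast^\perp}(\xi)$ from \eqref{eq:EffDualDissPot}. If $\xi\in\Gamma_\fast^\perp$, then $\gamma^r\cdot\xi=0$ for every $r\in R_\fast$, and since $\sfC^*(0)=0$ we get $\cR_\fast^*(c,\xi)=0$, hence $\cR_\eps^*(c,\xi)=\cR_\slow^*(c,\xi)=\cR_\eff^*(c,\xi)$. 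If $\xi\notin\Gamma_\fast^\perp$, then $\chi_{\Gamma_\fast^\perp}(\xi)=+\infty$ while $\cR_\eps^*(c,\xi)$ is finite (the monomials and $\sfC^*$ are real-valued), so again $\cR_\eps^*(c,\xi)\le\cR_\eff^*(c,\xi)$. Since $\cR_\eps(c,\cdot)$ and $\cR_\eff(c,\cdot)$ are by definition the Legendre--Fenchel transforms of $\cR_\eps^*(c,\cdot)$ and $\cR_\eff^*(c,\cdot)$ (see \eqref{eq:def.cR} and \eqref{eq:Eff.cR}), and $f\mapsto f^*$ reverses the order, we conclude $\cR_\eps(c,v)\ge\cR_\eff(c,v)$ for all $(c,v)\in\bfC\times\R^{i_*}$ and all $\eps>0$, which is the first assertion.

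For the representation of $\cR_\eff$, I would start from the formula
\[
\cR_\eff(c,v)=\sup_{\xi\in\R^{i_*}}\big\{v\cdot\xi-\cR_\slow^*(c,\xi)-\chi_{\Gamma_\fast^\perp}(\xi)\big\}
\]
in \eqref{eq:Eff.cR}. The term $-\chi_{\Gamma_\fast^\perp}$ restricts the supremum to $\xi\in\Gamma_\fast^\perp$, so $\cR_\eff(c,v)=\sup_{\xi\in\Gamma_\fast^\perp}\{v\cdot\xi-\cR_\slow^*(c,\xi)\}$. By the defining relations \eqref{eq:Def.Qfast} we have $\Gamma_\fast^\perp=\mathOP{im} Q_\fast^\top$, and $Q_\fast^\top:\R^{m_\fast}\to\R^{i_*}$ is injective (it is the adjoint of the surjection $Q_\fast$); hence $\zeta\mapsto Q_\fast^\top\zeta$ is a linear bijection of $\R^{m_\fast}$ onto $\Gamma_\fast^\perp$. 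Reparametrizing $\xi=Q_\fast^\top\zeta$ and using the adjoint identity $v\cdot Q_\fast^\top\zeta=(Q_\fast v)\cdot\zeta$ gives
\[
\cR_\eff(c,v)=\sup_{\zeta\in\R^{m_\fast}}\big\{(Q_\fast v)\cdot\zeta-\cR_\slow^*(c,Q_\fast^\top\zeta)\big\}=\wt\cR(c,Q_\fast v),
\]
which is exactly the claimed identity.

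There is no serious obstacle in this argument; the only mildly delicate points are keeping the correct direction of the inequality under the Legendre--Fenchel transform and observing that, although $\tfrac1\eps\cR_\fast^*(c,\cdot)$ is finite for every fixed $\eps>0$, it is still dominated by $\chi_{\Gamma_\fast^\perp}$, since the latter vanishes on $\Gamma_\fast^\perp$ --- precisely where $\cR_\fast^*(c,\cdot)$ vanishes as well --- and equals $+\infty$ off $\Gamma_\fast^\perp$. In particular, $\cR_\eps\ge\cR_\eff$ is a genuine and generically strict inequality between the primal potentials, with equality attained only in the limit $\eps\to0^+$, matching the heuristic monotone behaviour $\cR_\eps^*(c,\cdot)\nearrow\cR_\eff^*(c,\cdot)$ noted just before the proposition.
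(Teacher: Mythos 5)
Your proof is correct and follows essentially the same route as the paper: you establish the dual ordering $\cR^*_\eps(c,\cdot)\leq \cR^*_\eff(c,\cdot)$ via the vanishing of $\cR^*_\fast$ on $\Gamma_\fast^\perp$, reverse it through the Legendre--Fenchel transform, and obtain the representation by reparametrizing the supremum over $\Gamma_\fast^\perp=\mathOP{im}(Q_\fast^\top)$ as $\xi=Q_\fast^\top\zeta$. The only difference is that you spell out a few points the paper leaves implicit (the case $\xi\notin\Gamma_\fast^\perp$, the order reversal, the injectivity of $Q_\fast^\top$, which is in fact not even needed since surjectivity onto $\Gamma_\fast^\perp$ suffices for the change of variables in the supremum).
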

\begin{proof} We first use the standard relation from linear
  algebra:
$ 
\mathOP{im}(Q_\fast^\top) = \big(\mathOP{ker}(Q_\fast)\big)^\perp =
\Gamma_\fast^\perp.
$ 
By construction of $\Gamma_\fast$ we have  $\cR^*_\fast(c,\xi)=0$ for
$\xi \in \Gamma_\fast^\perp$ and obtain 
\[
\cR_\eps^*(c,\xi) = \cR^*_\slow(c,\xi) + \frac1\eps \cR^*_\fast(c,\xi)
\leq \cR^*_\eff(c,\xi):=\cR^*_\slow(c,\xi) + \chi_{\Gamma_\fast^\top}(\xi) 
= \cR^*_\slow(c,\xi) + \chi_{\mathOP{im}Q_\fast^\top}(\xi) .
\]
Applying the Legendre-Fenchel transformation we obtain 
\begin{align*}
\cR_{\eps}(c,v) & \geq \cR_\eff(c,v) = 
 \sup \bigset{ v\cdot\xi-\cR_\slow^{*} (c,\xi) }
 {\xi\in\mathOP{im}(Q_\fast^\top)} \\
 & =\sup\bigset{ v\cdot Q_\fast^\top \zeta
   -\cR_\slow^*(c,Q_\fast^\top \zeta) } {\zeta \in \R^{m_\fast} }  =
 \wt\cR(c,Q_\fast v),
\end{align*}
which provides the desired estimate as well as the representation via
$\wt\cR$. 
\end{proof}

The second result concerns the convergence of points towards the slow
manifold $\scrMsl$,  and the crucial property here is
the UFEC \eqref{eq:UFEC} that guarantees the
relation 
\[
 \bigset{\Psi(\sfq)}{ \sfq \in \Qspace \subset \R^{m_\fast}}=:\scrMsl\ \
 \overset{!!}= \ \ \scrEfa \overset{\text{Lemma \ref{le:Equilibria}}}=  
\bigset{ c\in \bfC }{ \calS_\fast(c)=0 }. 
\]

\begin{lem}[Convergence towards $\scrMsl$]\label{le:Cvg2Mslow}
For bounded sequences $(c^n)_{n\in \N}$ in $\bfC$ we have
\begin{equation}
  \label{eq:GenerCont2Msl}
 Q_\fast c^n  \to  \sfq \text{ and } \calS_\fast(c^n)\to 0 \quad
 \Longrightarrow \quad c^n \to \Psi(\sfq). 
\end{equation} 
\end{lem}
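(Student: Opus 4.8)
The plan is to argue by contradiction using compactness of bounded sequences in $\bfC$, together with the identification $\scrEfa = \scrMsl$ granted by the UFEC and the continuity of $\Psi$ from Proposition \ref{prop:InterEquil}. Suppose $(c^n)$ is bounded in $\bfC$ with $Q_\fast c^n \to \sfq$ and $\calS_\fast(c^n) \to 0$, but $c^n \not\to \Psi(\sfq)$. Then there is $\delta>0$ and a subsequence (not relabelled) with $|c^n - \Psi(\sfq)| \geq \delta$ for all $n$. By boundedness we may extract a further subsequence with $c^n \to \bar c$ for some $\bar c \in \bfC$ (here we use that $\bfC$ is closed).

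Next I would identify $\bar c$. Since $Q_\fast$ is continuous and $Q_\fast c^n \to \sfq$, passing to the limit gives $Q_\fast \bar c = \sfq$, hence $\bar c \in \bfC^\fast_\sfq$. The slope function $\calS_\fast$ is continuous on $\bfC$ (as noted after \eqref{eq:SlopeFcn}), so $\calS_\fast(\bar c) = \lim_n \calS_\fast(c^n) = 0$. By Lemma \ref{le:Equilibria} applied to the fast DBRS, $\calS_\fast(\bar c)=0$ means $\bar c \in \scrEfa$. Combining, $\bar c \in \bfC^\fast_\sfq \cap \scrEfa$. The UFEC \eqref{eq:UFEC} says this intersection is the single point $\{\Psi(\sfq)\}$, so $\bar c = \Psi(\sfq)$. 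This contradicts $|c^n - \Psi(\sfq)|\geq \delta$, and the claim follows.

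The main obstacle — really the only nontrivial point — is making sure the UFEC is invoked at the right place: the continuity and slope arguments only pin $\bar c$ down to $\bfC^\fast_\sfq \cap \scrEfa$, and without uniqueness of the fast equilibrium in that stoichiometric subset the limit could a priori be a boundary equilibrium different from $\Psi(\sfq)$ (exactly the situation illustrated in Remark \ref{re:Exa.noUEC} and Figure \ref{fig:NotUEC}). It is precisely Assumption \ref{as:UniqueFastEquil} that rules this out. One should also note that continuity of $\Psi$ (Proposition \ref{prop:InterEquil}) is not strictly needed for this contradiction argument, though it is what makes the conclusion "$c^n \to \Psi(\sfq)$" meaningful when $\sfq$ itself varies; here $\sfq$ is fixed, so plain continuity of $\calS_\fast$ and $Q_\fast$ plus the UFEC suffice.
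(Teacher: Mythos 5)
Your proposal is correct and follows essentially the same route as the paper's proof: extract a convergent subsequence by boundedness, identify the limit as an element of $\scrEfa\cap\bfC^\fast_\sfq$ via continuity of $Q_\fast$ and $\calS_\fast$, and conclude with the UFEC that the limit must be $\Psi(\sfq)$. The paper phrases the compactness step as ``without loss of generality assume $c^n\to\ol c$'' rather than as an explicit contradiction, but this is the same argument.
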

\begin{proof} 
  Without loss of generality we may assume $c^n\to \ol c$. Hence we
  have $Q_\fast c^n \to Q_\fast \ol c=\sfq$.  Moreover, the continuity
  of $\calS_\fast$ gives
  $0=\lim \calS_\fast(c^n)=\calS_\fast(\ol c)$. Thus, we have
  $\ol c\in \scrEfa\cap \bfC_\sfq^\fast$. Now, the UFEC (see
  \eqref{eq:Mslow=graphPsi}) gives $\ol c = \Psi(\sfq)$ which is the
  desired result.
\end{proof}

We are now ready to establish the main compactness result. 

\begin{proof}[Proof of Theorem \ref{th:Compactness}.] \emph{Part (i):}  
From the energy bound $\cE(c^\eps(t)) \leq M_\text{ener}<\infty$
and the coercivity of $\cE$ we obtain an $\L^\infty$ bound for
$c^\eps$, namely
$0\leq c_j^\eps(t) \leq |c^\eps(t)|\leq \|c^\eps\|_{\L^\infty}\leq
M_\text{ener}$. 

\emph{Part (ii):} To provide a lower bound on $\cR_\eff$ we first
observe an upper bound on $\cR^*_\slow$, namely 
\[
\cR^*_\slow(c^\eps,  Q_\fast^\top \zeta) \leq \sum_{r\in R_\slow}
\kappa_r M_\text{ener}^{(\alpha^r+\beta^r)/2} \;
\sfC^*\Big(\gamma^r\cdot Q_\fast^\top \zeta \Big) \leq b_M \sfC^*\big(
b_Q |\zeta|\big)\quad \text{with }
b_Q = \max_{r\in R_\slow} |Q_\fast \gamma^r|,
\] 
where we used $0\leq c_j^\eps \leq M_\mafo{ener}$ from part (i). Using
the Legendre-Fenchel transformation and Proposition \ref{pr:EffDissPot} we obtain the lower bound
\[
\cR_\eps(c^\eps,v) \geq \wt\cR(c^\eps,Q_\fast v) \geq  \sup\bigset{ Q_\fast v\cdot  \zeta
   -b_M \sfC^*(b_Q |\zeta|) } {\zeta \in \R^{m_\fast} } 
\  = \ b_M \,\sfC\Big( \frac{|Q_\fast v|}{b_M b_Q} \Big).
\] 

Using the bound $M_\mafo{diss}$ for the dissipation functionals, the
family satisfies
\begin{align*}
\int_0^T \sfC\bigg( \frac{|Q_\fast \dot c^\eps(t)|}{b_Mb_Q}\bigg)  \dd t \leq
\int_0^T \frac1{b_M} \cR_\eps\big(c^\eps(t), \dot c^\eps(t)\big)  \dd t \leq 
\frac1{b_M} \fD_\eps(c^\eps) \leq M_\mafo{diss}/b_M.
\end{align*}
Since $\sfC(s) \geq \frac12|s|\log(1{+}|s|)$ for all $s \in \R$ (cf.\
\cite[Eqn.\,(A.2)]{MieSte19?CGED})  we have a uniform superlinear
bound for $Q_\fast \dot c^\eps$.  Thus, there exists
a subsequence (not relabeled) such that $Q_\fast \dot c^\eps \weak \mathsf w$ in
$\L^1([0,T];\R^{m_\fast})$. Moreover, $Q_\fast c^\eps$ is
equicontinuous (cf.\ \cite[Prop.\,5.9]{MieSte19?CGED}), which implies
$Q_\fast c^\eps \to \sfq^0$ in $\rmC^0([0,T];\Qspace)$. 

Because of $c^\eps \weak c^0$ we conclude $\sfq^0=Q_\fast c^0 \in
\W^{1,1}([0,T];\Qspace)$ and $\dot \sfq = \mathsf w$. Since the limit is
unique, we also know that the whole family converges. 

\emph{Part (iii):} The dissipation bound gives the estimate 
$\int_0^T \calS_\fast(c^\eps(t)) \dd t \leq \eps \,M_\mafo{diss}$.
Using $\calS_\fast(c)\geq 0$ this implies that
$f_\eps=\calS_\fast\circ c^\eps$ converges to $0$ in $\L^1([0,T])$.
Thus, we may choose a subsequence (not relabeled) such that $f_\eps(t)
\to 0$ a.e.\ in $[0,T]$. 

By the continuity $\calS_\fast$ and $|c^\eps(t)| \leq M_\mafo{ener}$ we also
know that $(f_\eps(t))_{\eps \in {]0,1[}}$ is bounded, while part (ii) provides
the convergence $Q_\fast c^\eps(t) \to \sfq^0(t)=Q_\fast c^0(t)$. Hence, Lemma
\ref{le:Cvg2Mslow} guarantees $c^\eps(t) \to \wt c(t):=\Psi(Q_\fast c^0(t))$
a.e.\ in $[0,T]$. By $c^\eps \weak c^0$ we have $c^0(t)=\wt c(t)$ a.e.
  
Since $\Psi$ is continuous by Proposition \ref{prop:InterEquil}, also
$\wt c=\Psi(Q_\fast c^0)$ is continuous.

\emph{Part (iv):} This follows via part (i), the pointwise a.e.\ convergence
established in the proof of part (iii), and from the 
dominated-convergence theorem.
\end{proof}


\subsection{Liminf estimate}
\label{su:LiminfEst}

The liminf estimate follows in a straightforward manner by using the fact that
the velocity part $\cR_\eps$ in $\fD_\eps$ satisfies the monotonicity
$\cR_\eps \geq \cR_\eff$, see Proposition \ref{pr:EffDissPot}, and
that the slope part $\calS_\eps $ takes the simple form
$\calS_\slow + \frac1\eps \calS_\fast$.

\begin{thm}[Liminf estimate]\label{th:LiminfEst}
Let $(c^{\eps})_{\eps>0}$ with $c^\eps \weak c^0$ in
$\L^1([0,T];\bfC)$ as in Theorem
\ref{th:Compactness} we have the estimate 
 $ \fD_0(c^0) \leq \liminf_{\eps \to 0^+}\fD_{\eps}(c^{\eps})$.
\end{thm}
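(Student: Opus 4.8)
The plan is to split $\fD_\eps$ into the velocity part carried by $\cR_\eps$ and the slope part carried by $\calS_\eps$, and to treat each by an elementary lower-semicontinuity argument after invoking the compactness already at hand. First I would assume, without loss of generality, that the $\liminf$ is attained as a limit and is finite, so that along a subsequence $\fD_\eps(c^\eps)\le M_\mafo{diss}<\infty$; then Theorem \ref{th:Compactness} applies and supplies everything needed: the $\L^\infty$-bound on $(c^\eps)$, the weak $\W^{1,1}$- and strong $\rmC^0$-convergence of $Q_\fast c^\eps$ to $Q_\fast c^0$, the strong $\L^p$- and hence (along a further subsequence) pointwise a.e.\ convergence $c^\eps\to c^0$, and the identity $c^0(t)=\Psi(Q_\fast c^0(t))\in\scrMsl$ for a.e.\ $t$. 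In particular $c^0\in\rmC^0([0,T];\bfC)$ and $Q_\fast c^0\in\W^{1,1}([0,T];\R^{m_\fast})$, so $\fD_0(c^0)$ is given by its finite formula; moreover $\chi_{\scrEfa}(c^0(t))=0$ a.e., so that $\calS_0(c^0)=\calS_\slow(c^0)$ a.e.

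For the slope part I would drop the nonnegative fast contribution, $\calS_\eps(c^\eps)\ge\calS_\slow(c^\eps)$, and use that $\calS_\slow$ is continuous: together with the uniform $\L^\infty$-bound and the pointwise convergence $c^\eps\to c^0$, dominated convergence gives $\int_0^T\calS_\slow(c^\eps)\dd t\to\int_0^T\calS_\slow(c^0)\dd t=\int_0^T\calS_0(c^0)\dd t$.

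For the velocity part the decisive tool is Proposition \ref{pr:EffDissPot}, which yields $\cR_\eps(c^\eps,\dot c^\eps)\ge\cR_\eff(c^\eps,\dot c^\eps)=\wt\cR(c^\eps,Q_\fast\dot c^\eps)$: this replaces the uncontrolled full velocity $\dot c^\eps$ by the controlled slow velocity $Q_\fast\dot c^\eps=\tfrac{\rmd}{\rmd t}(Q_\fast c^\eps)$, which converges weakly in $\L^1([0,T];\R^{m_\fast})$ to $\tfrac{\rmd}{\rmd t}(Q_\fast c^0)$. Since, for each fixed $\zeta$, the map $(c,\sfq)\mapsto\zeta\cdot\sfq-\cR_\slow^*(c,Q_\fast^\top\zeta)$ is jointly continuous (using continuity of $c\mapsto\cR_\slow^*(c,\cdot)$ up to $\pl\bfC$, the exponents $(\alpha^r{+}\beta^r)/2$ being nonnegative), the integrand $\wt\cR(c,\sfq)=\sup_\zeta\{\zeta\cdot\sfq-\cR_\slow^*(c,Q_\fast^\top\zeta)\}$ is a nonnegative normal integrand that is convex in $\sfq$. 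I would then invoke the classical lower-semicontinuity theorem for such integral functionals (an Ioffe-type result, applied with $c^\eps\to c^0$ in measure and $Q_\fast\dot c^\eps\weak\tfrac{\rmd}{\rmd t}Q_\fast c^0$ weakly in $\L^1$) to obtain $\liminf_\eps\int_0^T\wt\cR(c^\eps,Q_\fast\dot c^\eps)\dd t\ge\int_0^T\wt\cR(c^0,\tfrac{\rmd}{\rmd t}Q_\fast c^0)\dd t=\int_0^T\cR_\eff(c^0,\dot c^0)\dd t$. Alternatively this step can be done by hand, testing against $\zeta\in\L^\infty([0,T];\R^{m_\fast})$, passing to the limit by weak $\L^1$-convergence of $Q_\fast\dot c^\eps$ and dominated convergence for $\cR_\slow^*(c^\eps,Q_\fast^\top\zeta)$, and then taking the supremum over $\zeta$ via Rockafellar's interchange of supremum and integral for normal integrands over the decomposable space $\L^\infty$.

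Adding the two estimates gives $\liminf_\eps\fD_\eps(c^\eps)\ge\int_0^T\{\cR_\eff(c^0,\dot c^0)+\calS_0(c^0)\}\dd t=\fD_0(c^0)$. The main obstacle is precisely the velocity part: one must establish lower semicontinuity of a $c$-dependent convex integral functional although only $Q_\fast\dot c^\eps$ --- and not $\dot c^\eps$ itself --- is weakly precompact in $\L^1$, which is exactly why factoring $\cR_\eff$ through the operator $Q_\fast$ and the reduced potential $\wt\cR$ is indispensable; one must then verify that the hypotheses (normal integrand, convexity in the velocity, convergence in measure of $c^\eps$) needed to run the Ioffe/Rockafellar machinery are indeed met.
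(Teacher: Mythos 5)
Your proposal is correct and follows essentially the same route as the paper: reduce the velocity term via $\cR_\eps\geq\cR_\eff=\wt\cR(\cdot,Q_\fast\,\cdot)$ from Proposition \ref{pr:EffDissPot}, combine this with the compactness of Theorem \ref{th:Compactness} (strong convergence of $c^\eps$, weak $\L^1$-convergence of $Q_\fast\dot c^\eps$, and $c^0\in\scrMsl$ a.e.\ so that $\calS_0(c^0)=\calS_\slow(c^0)$), and conclude by Ioffe-type lower semicontinuity. The only cosmetic difference is that the paper applies Ioffe once to the combined integrand $\wt\cR(c,\sfw)+\calS_\slow(c)$, whereas you split off the slope term and handle it by dominated convergence (and offer a by-hand duality alternative for the velocity term), which changes nothing essential.
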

\begin{proof}
  We may assume that
  $\alpha_{*}:=\liminf_{\eps \to 0}\fD_{\eps}(c^{\eps})<\infty$, since
  otherwise the desired estimate is trivially satisfied. This implies
  $\calS_\fast(c^0(t))=0$ a.e.\ in $[0,T]$ as in the previous proof.  We define the functional
\[
\mathfrak I(c,\sfq):= \int_0^T \calF(c(t),\sfq(t)) \dd t \quad \text{with }
\calF(c,\sfw)= \wt\cR(c,\sfw) + \calS_\slow(c).
\]
Then, using $\cR_\eps \geq \cR_\eff$ and $\calS_\eps\geq
\calS_\slow$, we have
\[
\fD_\eps(c^\eps) \geq \mathfrak I(c^\eps,Q_\fast \dot c^\eps) \quad
\text{and} \quad \fD_0(c^0)=\mathfrak I(c^0, Q_\fast \dot c^0),
\]
where the last identity follows from the construction of the density
$\calF$ via $\wt\cR$ and $\calS_\slow$, and
$\calS_0(c(t))=\calS_\slow(c(t))$ a.e.\ because of
$\calS_\fast(c^0(t))=0$.

Thus, it suffices to show the lower semicontinuity $\mathfrak I(c^0,Q_\fast \dot c^0)\leq
\liminf_{\eps\to 0^+} \mathfrak I(c^\eps,Q_\fast \dot c^\eps)$. Using
the strong convergence $c^\eps \to c^0$ in $\L^p([0,T];\bfC)$ and the
weak convergence $Q_\fast \dot c^\eps \weak Q_\fast \dot c^0$ in
$\L^1([0,T];\R^{m_\fast})$, see Theorem \ref{th:Compactness}(ii+iv),
this follows by Ioffe's theorem (cf.\ \cite[Thm.\,7.5]{FonLeo07MMCV}
if  $\calF:\bfC\ti \R^{m_\fast} \to [0,\infty]$ is lower
semicontinuous. However, the lower semicontinuity of $(c,\sfw) \mapsto
\calF(c,\sfw)=\wt\cR(c,\sfw) + \calS_\slow(c)$ follows immediately
from the continuity of $\calS_\slow$ and the by Legendre transforming
the continuous function $(c,\zeta) \mapsto \cR^*_\slow(c,Q_\fast^\top
\zeta)$. 

This finishes the proof of Theorem \ref{th:LiminfEst}.
\end{proof}

\subsection{Construction of the recovery sequence}
\label{su:Recovery}

In this section we construct the recovery sequence which
completes the proof of the Mosco convergence
$\fD_\eps \MoscoLimE \fD_0$ with energy constraint. Below in Step 1, we
will need the positivity and monotonicity condition \eqref{eq:InterEquilCond} for
$\theta \mapsto \Psi(\sfq {+}\theta \ol \sfq)$.

\begin{thm}[Limsup estimate]\label{th:Recovery}
  Let $c^{0} \in \L^1([0,T]; \bfC )$ with
  $\sup_{t\in[0,T]}\cE(c^{0}(t))<\infty$.  Then there exists a family
  $(c^{\eps})_{\eps\in]0,1]}$ with
  $\sup_{t\in[0,T],\ \eps\in]0,1]}\cE(c^\eps(t)) \leq M_\mafo{ener}
  <\infty$, \ $c^{\eps} \to c^{0}$ strongly in $\L^1([0,T]; \bfC)$,
  and $\lim_{\eps \to 0}\fD_{\eps}(c^{\eps})=\fD_{0}(c^{0})$.
\end{thm}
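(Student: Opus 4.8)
The plan is to reduce the general case to a smooth case by a diagonal argument, and then to build the recovery sequence explicitly so that the fast component of $c^\eps$ relaxes onto $\scrMsl$ on the fast time scale. First I would observe that since $\fD_0(c^0)<\infty$ forces (via $\calS_0$) the curve $c^0$ to lie in $\scrMsl$ with $\sfq^0:=Q_\fast c^0\in\rmW^{1,1}([0,T];\R^{m_\fast})$ and $c^0=\Psi(\sfq^0)$, the object to approximate is really the slow curve $\sfq^0$. By the standard density and lower-semicontinuity machinery for $\Gamma$-limsup (approximating $\sfq^0$ by curves $\sfq^0_\delta$ that stay in $\mathrm{int}\,\Qspace$, are piecewise $\rmC^1$, and satisfy $\sfE(\sfq^0_\delta(0))\to\sfE(\sfq^0(0))$, $\fD_0(\Psi(\sfq^0_\delta))\to\fD_0(\Psi(\sfq^0))$), it suffices to construct recovery sequences when $c^0=\Psi(\sfq^0)$ with $\sfq^0$ nice and bounded away from $\partial\Qspace$; here the positivity/monotonicity assumption \eqref{eq:InterEquilCond} is used, shifting $\sfq^0$ by $\theta\ol\sfq$ so that $\Psi(\sfq^0+\theta\ol\sfq)$ is strictly positive and still energy-controlled, which is exactly Step 1 referred to after the assumption.

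For such a smooth, interior target $c^0=\Psi(\sfq^0)$, the candidate recovery sequence is simply the \emph{constant-in-$\eps$ choice} $c^\eps:=c^0$ (possibly after the above regularization). One then must check $\lim_{\eps\to0}\fD_\eps(c^0)=\fD_0(c^0)$. For the slope part this is immediate: $\calS_\fast(c^0(t))=0$ since $c^0(t)\in\scrEfa$, so $\tfrac1\eps\calS_\fast(c^0)\equiv 0$ and $\calS_\eps(c^0)=\calS_\slow(c^0)=\calS_0(c^0)$ pointwise, giving convergence of $\int_0^T\calS_\eps(c^0)\dd t$ by dominated convergence. For the velocity part one uses that $\dot c^0(t)=\rmD\Psi(\sfq^0(t))\dot\sfq^0(t)\in\rmT_{c^0(t)}\scrMsl$, and that by Proposition \ref{pr:EffDissPot} and \eqref{eq:Eff.cR} we have the sandwich $\cR_\eff(c^0,\dot c^0)\le \cR_\eps(c^0,\dot c^0)\le \cR_1(c^0,\dot c^0)$; the upper bound is integrable since $c^0$ is smooth and interior, so by dominated convergence it is enough to show $\cR_\eps(c^0(t),\dot c^0(t))\to\cR_\eff(c^0(t),\dot c^0(t))$ pointwise. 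This last pointwise limit follows from the monotone convergence $\cR^*_\eps(c,\cdot)\nearrow\cR^*_\eff(c,\cdot)$ together with the fact that $\cR^*_\eff(c,\cdot)$ is the Legendre transform realized on $\Gamma_\fast^\perp=\mathrm{im}\,Q_\fast^\top$, so the epigraphical (Mosco) convergence of $\cR^*_\eps(c,\cdot)$ to $\cR^*_\eff(c,\cdot)$ passes to the conjugates, yielding $\cR_\eps(c,\cdot)\to\cR_\eff(c,\cdot)$ pointwise; here no genuine fast motion in the recovery sequence is needed precisely because the target already lies on $\scrMsl$.

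The main obstacle, and the only place the argument is delicate, is the \emph{boundary behaviour}: a general $c^0$ with $\fD_0(c^0)<\infty$ may have $\sfq^0(t)$ touching $\partial\Qspace$ on a set of times, where $\Psi$ is merely continuous (not $\rmC^1$), so $\dot c^0$ need not exist and the naive $c^\eps=c^0$ is not in $\rmW^{1,1}$. The remedy is the regularization in the first paragraph: perturb $\sfq^0$ into the interior by $\theta\ol\sfq$ and mollify in time to get $\sfq^0_{\delta}\in\rmC^1([0,T];\mathrm{int}\,\Qspace)$; assumption \eqref{eq:InterEquilCond} guarantees $\Psi(\sfq^0_\delta)_i\ge\Psi(\sfq^0)_i\ge 0$ and $\Psi(\sfq^0_\delta)$ strictly positive, so $\sup_t\cE(\Psi(\sfq^0_\delta(t)))\le\sup_t\cE(c^0(t))+o(1)$ stays bounded (monotonicity of $\LB$ on the components), and continuity of $\Psi$, $\cE$, $\cR_\slow$, $\calS_\slow$ gives $\sfE(\sfq^0_\delta(0))\to\sfE(\sfq^0(0))$ and $\fD_0(\Psi(\sfq^0_\delta))\to\fD_0(\Psi(\sfq^0))$ as $\delta\to0$; lower semicontinuity of $\fD_0$ (Theorem \ref{th:LiminfEst}) upgrades $\limsup$ to $\lim$. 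A diagonal extraction $\eps\mapsto\delta(\eps)$ then produces the desired $(c^\eps)$ with $c^\eps\to c^0$ in $\L^1$, uniformly bounded energy, and $\fD_\eps(c^\eps)\to\fD_0(c^0)$, completing the proof and hence the Mosco convergence $\fD_\eps\MoscoLimE\fD_0$.
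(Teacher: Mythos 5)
Your proposal is correct and follows essentially the same route as the paper: reduce to strictly positive curves via the shift $\Psi(\sfq+\theta\ol\sfq)$ using assumption \eqref{eq:InterEquilCond}, regularize the slow variable in time, then take the constant recovery sequence $c^\eps=c^0$, where the slope part is trivial on $\scrMsl$ and the velocity part follows from $\cR_\eps\searrow\cR_\eff$ together with the integrable majorant $\cR_1(c^0,\dot c^0)$, concluding with a diagonal argument. The only (cosmetic) difference is that you mollify $\sfq$ in time and invoke continuity, whereas the paper uses piecewise affine interpolants combined with Jensen's inequality and a quantitative Lipschitz-in-$c$ estimate for $\wt\cR$ to control the velocity integral during this regularization.
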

\begin{proof}
We prove the theorem in several steps. In Steps 1 and 2 we show that it is
sufficient to consider $c^0 \in \W^{1,\infty}([0,T];\bfC)$ with
$c_j^0(t)\geq \ul c>0 $, where we only work in $\fD_0$ which has the
advantage that $\calR_\eff(c,\dot c)$ only depends on $(\sfq,\dot\sfq) =
(Q_\fast c, Q_\fast \dot c)$, see Section \ref{su:CoarseGrain}. In Step 3 we
construct a recovery sequence, and in Step 4 we conclude with a
diagonal argument. 

\emph{Step 0:} To start with we may assume $\fD_0(c^0)<
\infty$. Indeed, if $\fD_0(c^0)=\infty$, then we choose $c^\eps=c^0$
and Theorem \ref{th:LiminfEst} gives $\liminf_{\eps\to 0}
\fD_\eps(c^\eps) \geq \fD_0(c^0)=\infty$, which means
$\fD_\eps(c^\eps) \to \infty$ as desired. 

\emph{Step 1. Reducing to positive curves $c^0$:} For $c^0$ with
$\fD_0(c^0)<\infty$ we know that $Q_\fast c^0 \in \W^{1,1}([0,T];\Qspace)$ and
$c^0\in \rmC^0([0,T];\bfC)$ after choosing the continuous
representative $c^0=\wt c$, see 
Theorem \ref{th:Compactness}. Exploiting the positivity and
monotonicity condition \eqref{eq:InterEquilCond} we now set
\[
  \ul c^l(t) := \Psi\big( \sfq(t) + \theta_l \, \ol\sfq \big)  \quad
\text{with }\ \theta_l = \frac1{l{+}1}\in {]0,1[}\quad
  \text{for all }t\in [0,T]. 
\]
By this condition, we know that $\ul c^l(t)$ lies in $\bfC_+$ for all
$t\in [0,T]$, such that the continuity of $\ul c^l$ guarantees that
for each $l$ there exists a $\delta_l>0$ such that  
$\ul c_i^l(t)\geq \delta_l$ for all $i\in I$ and $t\in [0,T]$. 
 
By the continuity of $\Psi$ we have $\ul c^l \to c^0$ uniformly and
hence strongly in $\L^1([0,T];\bfC)$.  We now show 
\begin{equation}
  \label{eq:fD.ck.c0}
  \fD_0(\ul c^l)= \int_0^T \!\big\{ \cR_\eff(\ul c^l(t), 
 \dot{\ul c}{}^l(t)) + \calS_0(\ul c^l(t))\big\} \dd t \ \to \ \fD_0(c^0)\quad \mathrm{as}\quad l\to 0. 
\end{equation}
For the second part, we use $\ul c^l(t)\in \scrMsl$ by construction
via $\Psi$, and the continuity of $\calS_\slow$ yields 
$\calS_0(\ul c^l(t)) = \calS_\slow( \ul c^l(t))
\to \calS_\slow(c^0(t)) = \calS_0(c^0(t))$ uniformly in $[0,T]$. 

For the first part we use (i) the special form of $\cR_\eff$ derived
in Proposition \ref{pr:EffDissPot}, namely
$\cR_\eff(c, v)= \wt\cR(c,Q_\fast v)$, where $\wt\cR(c,\cdot)$ is the
Legendre transform of $ \cR^*_\slow(c, Q^\top \,\cdot\,)$. Moreover,
the cosh-type dual dissipation potential $\cR^*_\slow $ as defined in
\eqref{eq:RRE.GS.b} or \eqref{eq:RRE.fs} enjoys (ii) a monotonicity
property namely $\cR^*_\slow( c,\xi) \leq \cR^*_\slow(\wt c, \xi)$ or
equivalently $\cR_\slow( c,v) \geq \cR_\slow(\wt c, v)$ if
$ c \leq \wt c$ componentwise. This can be exploited because of
the monotonicity condition \eqref{eq:InterEquilCond} using
$\ul c^l(t)\geq c^0(t)$ componentwise. With
$Q_\fast \,\ul{\dot c}{}^l(t)= \dot \sfq(t)$ for all $l\in \N$ we obtain
\begin{align*}
  & \int_0^T \! \cR_\eff(\ul c^l, \dot{\ul c}{}^l(t))
\dd t \ \overset{\text{(i)}} = \  \int_0^T \!
  \wt\cR( \ul c^l,  \dot \sfq(t)) \dd t 
\ \overset{\text{(ii)}}\longrightarrow \  \int_0^T \!
  \wt\cR(c^0, \dot \sfq(t)) \dd t 
\ \overset{\text{(i)}}= \ \int_0^T \!
  \cR_\eff(c^0,\dot c^0) \dd t , 
\end{align*}
where the convergence $\overset{\text{(ii)}}\longrightarrow$ follows from the
dominated-convergence theorem, since  the integrands on the left-hand side
are bounded by that on the right-hand side and we have pointwise
convergence.  With this we have established the desired convergence
\eqref{eq:fD.ck.c0}.  \smallskip 

\emph{Step 2. Reducing to bounded derivative $\dot\sfq=Q_\fast \dot
  c$:} Because of Step 1, we can now assume 
\[
c^0(t)\in \bfC_\delta:=
\bigset{c \in \bfC}{ |c|\leq 1/\delta, c_i\geq \delta \text{ for all }
i\in I} \ \text{ for all } t\in [0,T]
\]
where $\delta>0$. Moreover, as in \cite[Step 2(b) of proof of
Thm.\,5.12]{MieSte19?CGED} we find $\Lambda^*$ such that 
\[
c,\wt c \in \bfC_\delta \text{ and } |c{-}\wt c|\leq \alpha
<\frac1{2\Lambda^*}\quad  \Longrightarrow \quad   
\wt\cR(\wt c, \sfw) \ \leq \  (1{+}\Lambda^* \alpha)\,  \wt\cR(c,\sfw).
\] 
 With this we can estimate $\cR^*_\slow(c,\cdot)$
from below and hence $\cR_\eff$ from above. Moreover, we can use the
Lipschitz continuity of $c \mapsto \cR^*_\eps $.

For $\sfq(t)=Q_\fast c^0\in \W^{1,1}([0,T];\Qspace)$ we define the
piecewise affine interpolants  $\wh\sfq^k$ via  
\[
\wh\sfq^k\big((n{+}\theta) 2^{-k}T\big) = (1{-}\theta) \sfq\big(n2^{-k}T\big) +
\theta \sfq \big((n{+}1)2^{-k}T\big) \text{ for } \theta\in [0,1], \
n\in \{0,\ldots,2^k{-}1\}
\]
and the piecewise constant interpolant $\ol\sfq^k\big((n{+}\theta)
2^{-k}T\big)= \sfq(2^{-k} nT)$ for $\theta\in {[0,1[}$. We
also set $\wh c^{k}(t) = \Psi(\wh \sfq^k(t))$ and $\ol
c^k(t)=\Psi(\ol\sfq^k(t)) $. By standard
arguments we have 
\[
\|\ol c^k - \wh c^k\|_{\L^\infty} + \| \wh c^k - c^0\|_{\L^\infty} 
=: \alpha_k \to 0 \quad \text{for } k\to \infty.
\]

As in Step 1 we again find $\int_0^T \calS_0(\wh c^k(t)) \dd t \to
\int_0^T \calS_0(c^0(t))\dd t$. To treat the velocity part we use both
interpolants obtain the estimate 
\begin{align*}
&\int_0^T\! \cR_\eff(\wh c^k,\dot{\wh c}^k)\dd t \ 
= \ \int_0^T\! \wt\cR(\wh c^k,\dot{\wh\sfq}{}^k)\dd t  
\ \leq \  (1{+}\Lambda^*\alpha_k) \int_0^T\! \wt\cR(\ol c^k ,
\dot{\wh\sfq}^k) \dd t \\
&\ \overset{(\rmJ)}\leq \ (1{+}\Lambda^*\alpha_k) \int_0^T\! \wt\cR(\ol
c^k , \dot\sfq) \dd t  \ \leq \   
\int_0^T\! \wt\cR( c^0 , \dot\sfq) \dd t \ = \ 
(1{+}\Lambda^*\alpha_k)^2 \int_0^T\! \cR_\eff(c^0, \dot c^0) \dd t, 
\end{align*}  
where $\overset{(\rmJ)}\leq $ indicates the use of Jensen's
inequality applied to the convex integrand $\wt\cR(\ol c^k(t) , \,\cdot\,)$,
which is independent of $t$ in the intervals ${]2^{-k}nT,
  2^{-k}(n{+}1)T[}$.    
Combining this with the slope part and using $\alpha_k\to 0$ we obtain
the desired estimate $\limsup_{k\to \infty} \fD_0( \wh c^k) \leq
\fD_0(c^0)$, which is of course a limit because of the liminf estimate
in Theorem \ref{th:LiminfEst}.\smallskip

\emph{Step 3. The limsup for $\eps\to 0^+$:} By Steps 1 and 2 it is
sufficient to consider $ c^0 \in \W^{1,\infty}([0,T];\bfC)$ with
$ c^0(t)= \Psi(\sfq(t)) \in \bfC_\delta$ for some $\delta>0$. For these functions we
can now use the constant recovery sequence $c^\eps = c^0$, i.e.\ we will show
\begin{equation}
  \label{eq:fDeps.fD0.c0}
  \fD_\eps( c^0)=\int_0^T\! \big\{\cR_\eps(c^0,\dot c^0) +
\calS_\eps(c^0) \big\}\dd t \ \to \ \fD_0(c^0) = \int_0^T\! \big\{\cR_\eff(c^0,\dot c^0)
+ \calS_0(c^0) \big\} \dd t 
\end{equation}
for $\eps\to 0^+ $. Because of $c^0(t)\in \scrMsl$ we have $\calS_\eps(c^0(t)) =
\calS_\slow(c^0(t)) =  \calS_0(c^0(t))$, so the second summand of the
integral $\fD_\eps(c^0)$  converges trivially. 

Recall that $\Gamma= \mathOP{span}\bigset{\gamma^r}{r\in
  R=R_\slow\,\dot\cup\,R_\fast}$ and define a projection $\bbQ$ on
$\R^{i_*}$ with $\mathOP{im} \bbQ = 
\Gamma$ giving $\mathOP{ker} \bbQ^\top = \Gamma^\perp$. With this
we can estimate the dual dissipation potential $\cR_\eps^*$ from
below:
\[
\cR^*_\eps(c,\xi) \ \geq \ \cR_1^*(c,\xi) \geq b_*
|\bbQ^\top \xi|^2.
\]
To see this use $\sfC^*(\sigma) \geq \frac12 \sigma^2$ and
$\big(c^{\alpha^r}c^{\beta^r} \big)^{1/2} \geq
\delta^{(\alpha^r+\beta^r)/2} $ for all  $r\in R$. 

By Legendre-Fenchel transformation we obtain an upper bound for
$\cR_\eps$, where we use $\dot c^0 \in \Gamma$, i.e.\ $\bbQ \dot
c^0(t)=\dot c^0(t)$ (cf.\ Lemma \ref{le:ConservQuanti}): 
\[
\cR_\eps( c^0(t), \dot c^0(t))  \ \leq \ \cR_1( c^0(t), \dot c^0(t))
\ \leq \ \frac1{4b_*} |\bbQ \dot c^0(t)|^2 
 \ =  \ \frac1{4b_*} |\dot c^0(t)|^2.
\]
From $c^0 \in \W^{1,\infty}([0,T];\bfC)$ we see that $t \mapsto
\cR_1( c^0(t), \dot c^0(t))$ lies in $\L^\infty([0,T])$ and thus
provides an integrable majorant for $t \mapsto \cR_\eps( c^0(t), \dot
c^0(t))$. However, the convergence 
$\cR_\eps^*(c,\xi) \nearrow \cR_0^\eps = \cR^*_\slow +
\chi_{\Gamma_\fast^\perp}$ for $\eps \to 0^+$ implies 
$\cR_\eps(c,v) \searrow  \cR_\eff(c,v)$ for
all $(c,v)\in \bfC_\delta \ti \R^{i_*}$. Hence, Lebesgue's dominated
convergence theorem gives 
\[
\int_0^T \cR_\eps(c^0(t),\dot c^0(t)) \dd t \ \to \ \int_0^T
\cR_\eff(c^0(t),\dot c^0(t)) \dd t \quad \text{for } \eps \to 0^+,
\]
and \eqref{eq:fDeps.fD0.c0} is established.

\emph{Step 4. Diagonal sequence:} The full recovery sequence for a
general $c^0$ with $\fD_0(c^0)<\infty$ is obtained via
$\sfq(t)=Q_\fast c^0(t)$ as a diagonal
sequence $c^\eps = \Psi\big( \wh\sfq{}^{k(\eps)}(t) + \theta_{l(\eps)}
\ol\sfq\big)$, where the functions $k(\eps)$ and $l(\eps)$ are
suitably chosen such that $c^\eps \to c^0$ strongly in
$\L^1([0,T];\bfC)$ and $\fD_\eps(c^\eps) \to \fD_0(c^0)$.  It is also
clear from the construction that $\| c^\eps\|_{\L^\infty} \leq 1 +
\|c^0\|_{\L^\infty}$ such that the uniform energy bound
$\cE(c^\eps(t))\leq M_\text{ener}$ holds. 
\end{proof}

\paragraph*{Acknowledgments.} The research was partially supported by Deutsche
Forschungsgemeinschaft (DFG) through the Collaborative Research Center SFB 1114
``\emph{Scaling Cascades in Complex Systems}'' (Project no.\ 235221301),
Subproject C05 ``Effective models for materials and interfaces with multiple
scales''.  The authors are grateful to Michiel Renger for helpful and
stimulating discussions. 

\footnotesize


\begin{thebibliography}{11}\itemsep0.1em

\bibitem[AGS05]{AmGiSa05GFMS}
{\scshape L.~Ambrosio, N.~Gigli, {\upshape and} G.~Savar{\'e}}.
\newblock {\em Gradient flows in metric spaces and in the space of probability
  measures}.
\newblock Lectures in Mathematics ETH Z\"urich. Birkh\"auser Verlag, Basel,
  2005.

\bibitem[Att84]{Atto84VCFO}
{\scshape H.~Attouch}.
\newblock {\em Variational Convergence of Functions and Operators}.
\newblock Pitman Advanced Publishing Program. Pitman, 1984.

\bibitem[Bot03]{Both03ILRC}
{\scshape D.~Bothe}.
\newblock Instantaneous limits of reversible chemical reactions in presence of
  macroscopic convection.
\newblock {\em J. Diff. Eqns.}, 193(1), 27--48, 2003.

\bibitem[Bra02]{Brai02GCB}
{\scshape A.~Braides}.
\newblock {\em $\Gamma$-Convergence for Beginners}.
\newblock Oxford University Press, 2002.

\bibitem[Bra14]{Brai14LMVE}
{\scshape A.~Braides}.
\newblock {\em Local minimization, Variational Evolution and
  Gamma-convergence}.
\newblock Lect.\ Notes Math.\ Vol.\ 2094. Springer, 2014.

\bibitem[{Dal}93]{Dalm93IGC}
{\scshape G.~{Dal Maso}}.
\newblock {\em An Introduction to {$\Gamma$}-Convergence}.
\newblock Birkh\"auser Boston Inc., Boston, MA, 1993.

\bibitem[DFM19]{DoFrMi19GSWE}
{\scshape P.~Dondl, T.~Frenzel, {\upshape and} A.~Mielke}.
\newblock A gradient system with a wiggly energy and relaxed {EDP}-convergence.
\newblock {\em ESAIM Control Optim. Calc. Var.}, 25(68), 45~pp, 2019.

\bibitem[DLZ18]{DiLiZi18EGCG}
{\scshape K.~Disser, M.~Liero, {\upshape and} J.~Zinsl}.
\newblock Evolutionary {$\Gamma$}-convergence of gradient systems modeling slow
  and fast chemical reactions.
\newblock {\em Nonlinearity}, 31(8), 3689--3706, 2018.

\bibitem[FoL07]{FonLeo07MMCV}
{\scshape I.~Fonseca {\upshape and} G.~Leoni}.
\newblock {\em Modern Methods in the Calculus of Variations: {$L^p$} spaces}.
\newblock Springer, 2007.

\bibitem[FrL19]{FreLie19?EDTS}
{\scshape T.~Frenzel {\upshape and} M.~Liero}.
\newblock Effective diffusion in thin structures via generalized gradient
  systems and {EDP}-convergence.
\newblock {\em WIAS Preprint 2601}, 2019.

\bibitem[Grm10]{Grme10MENT}
{\scshape M.~Grmela}.
\newblock Multiscale equilibrium and nonequilibrium thermodynamics in chemical
  engineering.
\newblock {\em Adv. Chem. Eng.}, 39, 75--128, 2010.

\bibitem[KaK13]{KanKur13STSM}
{\scshape H.-W.~Kang {\upshape and} T.~G.~Kurtz}.
\newblock Separation of time-scales and model reduction for stochastic reaction
  networks.
\newblock {\em Annals Appl. Probab.}, 23(2), 529--583, 2013.

\bibitem[LM{\etalchar{*}}17]{LMPR17MOGG}
{\scshape M.~Liero, A.~Mielke, M.~A.~Peletier, {\upshape and} D.~R.~M.~Renger}.
\newblock On microscopic origins of generalized gradient structures.
\newblock {\em Discr. Cont. Dynam. Systems Ser.~S}, 10(1), 1--35, 2017.

\bibitem[MaM20]{MaaMie20?MCRS}
{\scshape J.~Maas {\upshape and} A.~Mielke}.
\newblock Modeling of chemical reaction systems with detailed balance using
  gradient structures.
\newblock {\em J. Stat. Physics}, 2020.
\newblock Submitted. WIAS preprint 2712, arXiv:2004.02831.

\bibitem[Mar15]{Marc15CECP}
{\scshape R.~Marcelin}.
\newblock Contribution a l'\'etude de la cin\'etique physico-chimique.
\newblock {\em Annales de Physique}, III, 120--231, 1915.

\bibitem[MHM15]{MiHaMa15UDER}
{\scshape A.~Mielke, J.~Haskovec, {\upshape and} P.~A.~Markowich}.
\newblock On uniform decay of the entropy for reaction-diffusion systems.
\newblock {\em J. Dynam. Diff. Eqns.}, 27(3-4), 897--928, 2015.

\bibitem[Mie11]{Miel11GSRD}
{\scshape A.~Mielke}.
\newblock A gradient structure for reaction-diffusion systems and for
  energy-drift-diffusion systems.
\newblock {\em Nonlinearity}, 24, 1329--1346, 2011.

\bibitem[Mie16]{Miel16EGCG}
{\scshape A.~Mielke}.
\newblock On evolutionary {$\Gamma$}-convergence for gradient systems
  {(Ch.\,3)}.
\newblock In A.~Muntean, J.~Rademacher, {\upshape and} A.~Zagaris, editors,
  {\em Macroscopic and Large Scale Phenomena: Coarse Graining, Mean Field
  Limits and Ergodicity}, Lecture Notes in Applied Math. Mechanics Vol.\,3,
  pages 187--249. Springer, 2016.
\newblock Proc. of Summer School in Twente University, June 2012.

\bibitem[Mie17]{Miel17UEDR}
{\scshape A.~Mielke}.
\newblock Uniform exponential decay for reaction-diffusion systems with
  complex-balanced mass-action kinetics.
\newblock In P.~Gurevich, J.~Hell, B.~Sandstede, {\upshape and} A.~Scheel,
  editors, {\em Pattern of Dynamics}, Springer Proc.\ in Math.\ \& Stat.\
  Vol.\,205, pages 149--171. Springer, 2017.

\bibitem[MiS19]{MieSte19?CGED}
{\scshape A.~Mielke {\upshape and} A.~Stephan}.
\newblock Coarse graining via {EDP}-convergence for linear fast-slow reaction
  systems.
\newblock {\em Math. Models Meth. Appl. Sci.}, 30(9), 1765--1807, 2020.

\bibitem[MMP19]{MiMoPe18?EFED}
{\scshape A.~Mielke, A.~Montefusco, {\upshape and} M.~A.~Peletier}.
\newblock Exploring families of energy-dissipation landscapes via tilting ---
  three types of {EDP} convergence.
\newblock {\em Contin. Mech. Thermodyn.}, 2020.
\newblock To appear. WIAS preprint 2668 (arXiv:2001.01455).

\bibitem[MP{\etalchar{*}}17]{MPPR17NETP}
{\scshape A.~Mielke, R.~I.~A.~Patterson, M.~A.~Peletier, {\upshape and}
  D.~R.~M.~Renger}.
\newblock Non-equilibrium thermodynamical principles for chemical reactions
  with mass-action kinetics.
\newblock {\em SIAM J. Appl. Math.}, 77(4), 1562--1585, 2017.

\bibitem[MPR14]{MiPeRe14RGFL}
{\scshape A.~Mielke, M.~A.~Peletier, {\upshape and} D.~R.~M.~Renger}.
\newblock On the relation between gradient flows and the large-deviation
  principle, with applications to {M}arkov chains and diffusion.
\newblock {\em Potential Analysis}, 41(4), 1293--1327, 2014.

\bibitem[{\"O}tG97]{OttGrm97DTCF2}
{\scshape H.~C.~{\"O}ttinger {\upshape and} M.~Grmela}.
\newblock Dynamics and thermodynamics of complex fluids. {II}. {I}llustrations
  of a general formalism.
\newblock {\em Phys. Rev. E (3)}, 56(6), 6633--6655, 1997.

\bibitem[Ste19]{Step19EGCGS}
{\scshape A.~Stephan}.
\newblock On {EDP}-convergence for gradient systems with different time scales.
\newblock {\em Proc. Appl. Math. Mech.}, 19(1), 2019.

\bibitem[WiS17]{WinSch17HMCR}
{\scshape S.~Winkelmann {\upshape and} C.~Sch\"utte}.
\newblock Hybrid models for chemical reaction networks: Multiscale theory and
  application to gene regulatory systems.
\newblock {\em J. Chem. Physics}, 147(11), 114115/1--18, 2017.

\bibitem[Yon08]{Yong08ICPD}
{\scshape W.-A.~Yong}.
\newblock An interesting class of partial differential equations.
\newblock {\em J. Math. Phys.}, 49, 033503, 21, 2008.

\end{thebibliography}

\newcommand{\etalchar}[1]{$^{#1}$}
\def\cprime{$'$}

\end{document}